\let\footnote=\endnote
\definecolor{dgreen}{rgb}{0.1,0.6,0.3}
\definecolor{bluegreen}{rgb}{0.3,0.8,0.8}%{0.6,0.3,0.4}
\definecolor{bluered}{rgb}{0.6,0.3,0.4}
\definecolor{orange}{rgb}{1,0.5,0}
\def\bbtau{\tau}
\newtheorem{prop}{Proposition}
\newtheorem{thm}{Theorem}
\newtheorem{exm}{Example}
\newtheorem{dfn}{Definition}
\newtheorem{cor}{Corollary}
\newtheorem{lem}{Lemma}
\newtheorem{rem}{Remark}
\def\bq{\bar q}
\DeclareMathAlphabet{\mathcalligra}{T1}{calligra}{m}{n}
\def\tauu{\mathcalligra{t}}
\def\N{\mathbb N}
\def\R{\mathbb R}
\def\bTau{{{\tau}}}
\def\cI{\mathcal I}
\def\cR{\mathcal R}
\def\cL{\mathcal L}
\def\cP{\mathcal P}
\def\cW{\mathcal W}
\def\cS{\mathcal S}
\def\cO{\mathcal O}
\def\cA{\mathcal A}
\def\bft{\boldsymbol{\theta}}
\def\bfT{\boldsymbol{\Theta}}
\newcommand{\om}{\omega}
\newcommand{\bea}{\begin{eqnarray}}
\newcommand{\eea}{\end{eqnarray}}
\newcommand{\beas}{\begin{eqnarray*}}
\newcommand{\eeas}{\end{eqnarray*}}
\newcommand{\ba}{\begin{array}}
\newcommand{\ea}{\end{array}}
\newcommand{\be}{\begin{equation}}
\newcommand{\ee}{\end{equation}}
\def\a{\alpha}
\def\mf{\mathfrak{f}}
\def\md{\mathfrak{d}}
\def\cR{\mathcal R}
\def\cP{\mathcal P}
\def\cV{\mathcal V}
\def\bcR{\bar{\mathcal R}}
\def\N{\mathbb N}
\title{\LARGE Intertemporal Price Discrimination  with Time-Varying Valuations}
\author{{\large Victor F. Araman}
\\ {\footnotesize  Olayan School of Business, American University of Beirut, Beirut, Lebanon, va03@aub.edu.lb}\\ \large Bassam Fayad\\ {\footnotesize Institut de Math\'ematiques de Jussieu-Paris Rive Gauche, Paris, France, bassam.fayad@imj-prj.fr}}
\begin{document}
%%%%%%%%%%%%%%%%
%\begin{bibunit}[ormsv080]
\maketitle
\date{}
%\tableofcontents
%\addtocontents{toc}{\protect\setcounter{tocdepth}{2}}

\begin{abstract}

A firm that sells a non perishable product considers intertemporal price discrimination in the objective of maximizing its long-run average revenue. We consider a general model of patient customers with changing valuations. Arriving customers wait for a random but bounded length of time and purchase the product when its price falls below their valuation, which varies following a stochastic process. We show the optimality of a class of cyclic strategies and obtain an algorithm that yields them. When the pace of intertemporal pricing is constrained to be comparable to the upper bound on  customers patience levels, we have a good control on the cycle length and on the structure of the optimizing cyclic policies. The results also hold for forward looking strategic  customers as well as under an alternative objective of maximizing infinite horizon discounted revenue.

We cast our results in a general framework of optimizing the long-run average revenue for a class of revenue functions  that we denote {\it weakly coupled}, in which the revenue per period depends on a finite number of neighboring prices. We analyze in detail the case of Markovian valuations where we can obtain closed form formulations of the revenue function and some refinements of our main results. We also extend our results to the case of patience levels that are bounded only in expectation.

\end{abstract}

% Motivation for reluctance: Various external events could affect the buyer's delayed purchasing decision (e.g. buying the item elsewhere, release of a newer version, cancelation or confirmation of an event, etc...).
\section{Introduction}\label{Sec: Intro}
Typically, customers try to estimate the value they will generate from a product and then decide whether or not to purchase. This valuation exercise is quite complex as it may depend on many external factors and is often based on incomplete product information. As a result, it is not uncommon for customers to repeatedly change their willingness-to-pay, sometimes drastically, before eventually making the purchase or deciding to drop it completely. %A customer interested in purchasing a product would want to measure the value that he can generate from it before he makes the final decision. Yet this task is difficult and subject to the continuous flow of information made available. %Such value is usually subjective and evaluated based on the available information before the decision to purchase occurs.

When customers are interested in a product, they often go through an \textit{evaluation} phase, in which they gather information about the product and reach a first estimate of their willingness-to-pay or valuation. They move next to the \textit{purchasing decision} phase. Some customers may instantly purchase the product if they find the current price adequate. Others simply delay their decision seeking a better match in the future. Those customers will then be exposed to additional information, be it about the product or their personal state, that could affect their valuation and thus their purchasing decision. It is this latter phase that we are interested in modeling and analyzing.

Consider for instance a customer who is looking to buy a new camera. The customer has an estimate of her willingness-to-pay based on a previous experience she had with that brand, some information gathered online, and the urgency to buy the product. Given this valuation, she can decide to buy the camera or delay the purchase depending on how attractive the price is and whether she is expecting, say, a better deal in the near future. If she delays the purchase, she might indeed witness changes in the prices but possibly also in her own valuation of the product. These changes in the valuation may be due to a number of factors including, new product/customers-reviews, marketing campaigns (e.g., targeted online advertising), availability and prices of competing products, and so on. The valuation can also  be affected by the customer's personal information: e.g., a customer is going on an unplanned vacation and for whom the camera becomes more valuable; another customer gets a bonus at work and can now allocate a larger budget for the camera. All this information may be received sequentially, feeding the decision process and resulting in a dynamic update of the valuation.

Such behavior has been generally overlooked in the pricing literature, where customers are most often assumed to have a reservation price (possibly drawn from some given distribution) constant throughout the purchasing process. In this work, we consider customers that are interested in purchasing a product and see their valuations change through time. We shed some light on the impact that such varying valuations have on the seller's optimal pricing policy.

Intertemporal pricing is experienced in practice and often argued for as a way to price discriminate, and take advantage of customers heterogeneity, scarcity of capacity, and demand stochasticity. In our case, we analyze this phenomenon in a specific context characterized by the following modeling choices.
We consider  a setting where a non-perishable product is sold throughout an infinite horizon during which the seller has \textit{committed} to a sequence of prices. A continuous influx of heterogenous customers become interested in the product and approach the seller at different times. They could purchase upon arrival or remain in the system for a (random) period of time during which their valuations \textit{change} stochastically. We disregard any inventory and cost-related issues and assume the product is available throughout. The fact that the firm commits to future prices is aligned with recent literature on intertemporal pricing with patient and/or heterogenous customers   (see, \cite{Board2008}, \cite{Besbes15}, \cite{LiuCooper15}, \cite{Lobel17}, \cite{Garrett16} and \citet{Yiwei_Far_Trich18}). Customers' heterogeneity is modeled through the dynamics of the stochastic valuation process governing each arrival.

The questions we are interested in are whether, in this context, there is an opportunity for price discrimination; and if so, how to go about characterizing it formally. We specifically examine the optimality of cyclic policies and how numerically efficient these policies are, especially since such cyclic policies have been proven to be optimal in similar settings in the literature.

\vspace{0.2cm}

Our main contributions are the following.
\begin{itemize}
\item[1.] We introduce a very general and innovative model that incorporates  time-varying valuations in a simple setting of intertemporal pricing for which we obtain structural results for the optimal pricing strategies.

\item[2.]  When customers' lifespans in the system are bounded, we cast our results in a general framework of optimizing the long-run average revenues for a class of revenue functions  that we denote {\it weakly coupled}, in which the revenue per period depends on a finite number of neighboring prices.  Within this framework, we show the optimality of cyclic policies. We characterize such solutions and present an algorithm to obtain them.

\item[3.] We introduce a control parameter $M$ for the firm that measures the number of price changes that any customer may witness. Our algorithm yields the optimal pricing policies  in polynomial time in the number of prices available, but exponential in $M$.  By constructing an example where the optimal policy is cyclic, \textit{increasing} and involves all but one  of the available prices, we argue that, in general, in such changing valuation setting, the exponential complexity is unlikely to be reduced.

\item [4.] Our algorithm becomes fully relevant for small values of $M$. When customers can only see at most two different prices (i.e. $M=1$), we show that the pricing policy is either a fixed-price policy or cyclic and simple i.e. each price in the cycle is set only once for exactly a specified amount of time.

\item[5.] We analyze in detail the case of Markovian valuations where we obtain closed form formulations of the revenue function and some refinements of our main results. The Markovian model paired with a simple intertemporal pricing setting allows us to argue that the complexity of the problem can be traced primarily to the sole property of changing valuations.
In realistic Markovian models, we confirm numerically that considering the  consumer's changing valuation behavior may have a major impact on the pricing policy that the firm should adopt. In particular, overlooking such behavior can cause a sizable opportunity cost.

\item[6.] We establish the robustness of the optimality of cyclic policies  by extending our results  in three important directions: $i)$ patience levels bounded only in expectation; $ii)$ forward-looking or strategic customers; $iii)$   infinite horizon discounted revenue instead of  long-run average revenue.

\end{itemize}

\subsection{Literature Review}\label{sec: Lit Review}
%Our work contributes to the literature on intertemporal pricing in the context of changing valuations.

The literature on intertemporal pricing is quite extensive starting with the early work on durable goods (e.g., \cite{Coase72}, \cite{Stockey79}, \cite{Stockey81}, \cite{Conlisk84}, \cite{Besanko90} and \cite{Sobel91}).  %this strea show how price discrimination is optimal when some level of customer's heterogeneity exists. %We extract from this stream of literature a couple of ideas that are relevant to our analysis here.
%(e.g., \cite{Coase72} and \cite{Stockey81} and \cite{Besanko90}).
The work of \cite{Coase72}, \cite{Stockey79} and \cite{Stockey81} are among the first to argue that price commitment policies in the presence of strategic customers are an effective way to exercise market power. In their model, the firm  faces a population of customers that is present in the system from the start. In contrast, others (e.g., \cite{Conlisk84}, \cite{Besanko90} and \cite{Sobel91}) consider a firm that dynamically sets its price and face a continuous flux of strategic customers. The work of \cite{Conlisk84} is the closest in this literature to our work.  Customers in this paper have two possible valuations of the product, Low and High, and they remain in the system indefinitely until they purchase. Interestingly, \cite{Conlisk84} show that the optimal pricing policy is both cyclic and decreasing.

More recently, the operations management literature has looked at different variants of intertemporal pricing models (see, the recent reviews of \cite{Bitran03} and \cite{AvivVulc12}). Often in this literature, customers are myopic and impatient. They arrive through time, each endowed with a valuation drawn from a given distribution.  Customers purchase upon arrival if the price is higher than their valuation, otherwise they leave the system. Some of this literature incorporates customers' strategic behavior (see, the review of \cite{Shen07}).  \cite{Levin09} and \cite{Levin10} consider respectively a monopolistic and a set of firms selling to a set of differentiated customers segments and assume that all customers are in the system from the start. %In both papers, they prove the existence of a unique subgame perfect equilibrium that determines the optimal dynamic pricing policy.
\cite{Aviv08} consider forward looking customers with declining valuations who arrive following a Poisson process and where the listed price changes only once. More closely related to our work is the paper of \cite{Su07} that considers a deterministic model in which strategic customers are differentiated not only through their valuation, but also, through their patience level. Optimal prices are shown to be monotone. The very recent work of \cite{Caldentey16} considers an intertemporal pricing problem under minimax regret where both strategic and myopic patient-customers are considered. %They develop a robust mechanism design to compute an optimal policy.
Finally, two recent notes (\cite{Wang16} and \cite{Huetal16}) tackle the problem of intertemporal pricing in the presence of reference price effects in an infinite horizon setting with discounted revenue.

Besides that of \cite{Conlisk84}, the papers that are the most related to our model are the recent works of \cite{Besbes15}, \cite{LiuCooper15} and \cite{Lobel17}. Their models are variants of \cite{Conlisk84}, but, like us, consider a seller who commits to the pricing policy at the start of the horizon and maximizes the long-run average revenue. In the case of \cite{Besbes15}, heterogenous customers, through their valuations and their patience level, are strategic.  They prove that a cyclic pricing policy is optimal but does not need to be decreasing. In an almost exactly similar setting, \cite{LiuCooper15} consider patient customers who purchase as soon as the price drops below their willingness-to-pay. In their case, when the patience levels are deterministic and fixed, cyclic and decreasing policies are optimal. Finally, \citet{Lobel17} considers the exact setting of \cite{LiuCooper15} and suggests a dynamic programming algorithm that runs in polynomial time for finding optimal pricing policies under bounded, though arbitrary, distribution of the patience levels.

None of the above considers valuations that change through time. It is only recently that a handful of papers have tackled such behavior (see, \cite{Garrett16}, \cite{Deb14} and \cite{GallegoSahin10}). \cite{Garrett16} considers, similarly to us, an infinite horizon setting where customers arrive through time with valuations that stochastically change while facing a committed pricing path. Though our work is different in a number of ways. First, the model is set in continuous time where both buyers and sellers have a common discount rate and where customers are strategic and homogeneous with a valuation governed by a Markov process that continuously switches between only two values, Low and High.  The seller is optimizing on the set of pricing paths (on the entire positive line).  The optimal pricing path is shown to be cyclic decreasing (\`{a} la \cite{Conlisk84}). As opposed to \cite{Garrett16}, we consider a very general valuation process that is only required to be stationary.  We do consider the case where the process is Markovian, however we do not restrict it to taking two values (except for the simplified model of Appendix~B). Moreover, we assume that prices belong only to a finite and discrete set. Another recent paper that incorporates changing valuations is that of \cite{Deb14}. The model considers one unit of a good and one buyer who is present in the system from the start and has a valuation drawn from some distribution. At some random time, a stochastic shock occurs and causes the buyer's valuation to be drawn again (from the same initial distribution). Finally, we mention the paper of \cite{GallegoSahin10} that models changing valuations in the context of revenue management. They consider a firm selling a finite number of units of a product that can be purchased anytime during a finite horizon and get consumed at the end. Customers are present at the beginning of the horizon and see their expected valuations changing through time. The firm commits to the prices and the customers need to decide when to purchase.

\subsection{Outline of the Paper}
In Section~\ref{sec: simple} we briefly introduce a Markovian model of changing valuations. The objective of this section is to highlight, through a simple setting of an intertemporal pricing problem, the complexity that the moving valuations property  brings and compare that to the literature that focused on constant valuations. In Section~\ref{sec: Model_Main}, we introduce a very general model of changing valuations. We formulate an open loop optimization problem and state the main results with respect to the optimality of cyclic policies, their structure, and how efficiently these can be obtained.   In Section \ref{sec: Weakly Coupled}, we introduce the notion of \textit{weakly coupled} revenue functions and show that the revenue function of a pricing policy under a general but bounded valuation model belongs to the class of  weakly coupled functions. We next characterize the solutions of the optimization problem, defined in the previous section, when applied to this class of revenue functions. We also present an algorithm to obtain them. The proofs of our general results on optimal policies follow in a straightforward manner.
In Section~\ref{sec: Markovian_Model}, we focus on the special case in which the valuations processes follow discrete time Markov chains. In this context, we obtain a closed form for the revenue function. We also show in this setting that the revenue function has an affine structure that allows us to refine some of our main results.  In Section~\ref{sec: Complexity}, we discuss the complexity of the optimal policy.  To support our claim that, in general, the optimal policy lacks any simplifying structure, we give two examples, one for general weakly coupled functions and one for the specific setting of Markovian valuations. Consequently, we argue that our results are valuable in obtaining efficient approximations of the optimal policy while preserving the changing valuations feature. In Section~\ref{sec: extensions}, we analyze some important extensions. First, we consider the case where the patience levels are bounded only in expectation and show, in the Markovian setting, that all the results hold  when optimality is replaced with $\varepsilon$-optimality. Next, we discuss how the main results can be adjusted to the case in which the firm is maximizing an infinite horizon discounted revenue function. Finally, we argue how our results extend to the case where customers are forward looking. We devote  Section~\ref{sec: Numerics} for an extensive numerical analysis that takes full advantage of the algorithm suggested earlier,  and shows the opportunity cost of discarding the changing valuations property. We conclude in Section~\ref{sec: Conclusion}. In Appendix~A, we relate the proofs of the Markovian model, in particular, when the revenue function  is affine.  In Appendix~B, we analyze the setting where the firm can set only two prices and the valuation process is Markovian with unbounded patience levels. We specifically discuss in this setting the existence of a reset time.

\section{A Simple Time Varying Valuation Model} \label{sec: simple}
The objective of this section is to introduce a simple model of intertemporal pricing in an infinite horizon setting and highlight the complexity of our model's changing valuation feature. For this reason, we do not precisely define some of the concepts (e.g., cyclic policy) until the next section. There, we carefully introduce a general model, as well as the different definitions needed for the analysis, and  rigorously state our main results.

Consider a stream of  customers interested each in buying one unit of a non perishable product. At each period $t\in\N$, one customer arrives to the system.  If customers don't purchase on arrival, we assume in this simple model that they all have a lifespan of $\tau$ periods, beyond the period of their arrival, to purchase the product. This window of time is known as the patience level.  If customers don't purchase by the end of their lifespan, they leave the system (if $\tau=0$, all customers are impatient and they either purchase on arrival or leave without purchasing). %In the case where $\tau=1$ customers either purchase on arrival or not. If they don't, they remain in the system for one additional period at the end of which, having purchased or not,  they leave the system for good.

Customers arriving to the system have a valuation $v\in\Omega=\{v_1,v_2,..,v_K\}$ with $K<\infty$ which is drawn following some distribution ${\boldsymbol\gamma}$. During the ``lifespan" in the system, each customer's valuation of the product changes following a simple Markov chain with a given transition matrix $\bar Q$ on $\Omega$.

The seller adopts a committed pricing policy, whereby at time 0, the prices are set for the entire horizon, $\pi=(p_1,p_2,..)$, as a way to maximize the long-run average revenue. As mentioned earlier, the fact that the firm commits to future prices is aligned with recent literature on intertemporal pricing and is shown to be the optimal strategy to follow  (see, \cite{Board2008}, \cite{Besbes15}, \cite{LiuCooper15}, \cite{Lobel17}, \cite{Garrett16} and \citet{Yiwei_Far_Trich18}). We denote by $L_t(p_1,..,p_t)$  the revenue  function, which is the expected revenues generated during the first $t$ periods, having set the first $t$ prices  to be $p_1,...,p_t$. Let $\cR(\pi)=\lim\sup_{t\rightarrow\infty}\frac{1}{t}\,L_t(p_1,...,p_t)$ be the long-run average revenue generated by a policy $\pi$. Without loss of generality, we assume that the $p_i$'s belong to $\Omega$. We denote by $\cP$ the set of all possible pricing policies. The seller is looking to solve for
\begin{equation}\sup_{\pi\in\cP}\cR(\pi).\label{Eq: General_Main_Form}\end{equation}

The closest works in the literature to the model introduced above are those of \cite{Besbes15} and \cite{LiuCooper15}. But, for these papers, customers keep the same valuation (drawn on arrival) during their lifespan, which is equivalent to setting $\bar Q$ equal to the identity matrix, $\bar Q=\cal I$. Customers are strategic in the case of \cite{Besbes15}, while in the case of \cite{LiuCooper15}, customers are patient where they purchase as soon as their current valuation is larger than the current price. The results are of similar nature whether customers are forward-looking or patients. For now, we focus mainly on patient customers and address forward-looking customers in the extensions.

\subsection{Optimality of Cyclic Policies}

As an example of the results obtained in the literature addressing non-varying valuations, we now recall, using our notations, the main result of \cite{LiuCooper15}.
\begin{thm}[\cite{LiuCooper15}] \label{prop: Cooper} If $\bar Q={\cal I}$, then there exists a decreasing cyclic policy that solves (\ref{Eq: General_Main_Form}) which cycle size is smaller or equal to $K+\tau-1$ and can be obtained in polynomial (in $K$ and $\tau$) elementary computations.
\end{thm}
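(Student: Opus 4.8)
The plan is to exploit the fact that, with $\bar Q=\mathcal{I}$, every customer keeps her arrival valuation throughout her lifespan, so the only randomness left is the i.i.d.\ draw of valuations from $\boldsymbol\gamma$. I would first write the expected revenue collected in period $t$ explicitly. A customer who arrived at period $s\in[t-\tau,t]$ with valuation $v$ buys exactly at $t$ if and only if $p_t\le v$ and $p_{t'}>v$ for every $t'\in[s,t-1]$; summing over $s$ and integrating against $\boldsymbol\gamma$ gives
\[
 r_t=p_t\sum_{s=t-\tau}^{t}\boldsymbol\gamma\bigl(\{v:\ p_t\le v<\min_{s\le t'<t}p_{t'}\}\bigr),
\]
with the convention that the minimum over the empty set ($s=t$) is $+\infty$. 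The key observation is that $r_t$ depends only on the window $(p_{t-\tau},\dots,p_t)$ of $\tau+1$ consecutive prices; this finite coupling range is what makes the infinite problem finite-dimensional.

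Given this window dependence, I would recast $\sup_{\pi}\mathcal{R}(\pi)$ as a maximum-mean-cycle problem on a finite deterministic graph: take the vertex set to be all windows $(p_{t-\tau},\dots,p_{t-1})\in\Omega^{\tau}$, put a directed edge for each admissible choice of the next price $p_t\in\Omega$ (mapping the vertex to $(p_{t-\tau+1},\dots,p_t)$), and label that edge with $r_t$. Since $\mathcal{R}$ is the long-run average of the edge labels along an infinite walk, the classical theory of average-reward deterministic dynamic programming (Karp's maximum-mean-cycle) guarantees that the optimum is attained on a simple cycle, i.e.\ by a cyclic pricing policy. This already proves optimality of cyclic policies, but only with the crude length bound $|\Omega|^{\tau}=K^{\tau}$; the remaining work is to sharpen this to $K+\tau-1$ and to produce the decreasing structure.

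For the decreasing property I would argue by interchange within a cycle. The point is that for patient customers a weakly decreasing window makes each waiting customer pay the largest price not exceeding her valuation that she can reach before leaving, whereas a non-monotone window can make her buy early at a strictly lower affordable price; rearranging the prices of a cycle into decreasing order therefore cannot lower the revenue extracted from any valuation class. The delicate part is that rearranging a cycle also changes how the purchase windows of neighbouring customers overlap, so the interchange has to be carried out one inversion at a time while tracking which customers are re-routed; this bookkeeping, rather than the underlying monotonicity intuition, is the real obstacle.

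Finally, to bound the cycle length once decreasingness is in hand, I would use a drainage argument: a decreasing descent passes through at most $K$ distinct price levels, and whenever a level is held constant, the backlog of customers who arrived during the earlier (higher) part of the cycle is exhausted after at most $\tau$ periods, after which further holding merely repeats the per-period contribution of a fixed price and can be removed without decreasing the mean. Combining at most $K$ levels with a total of at most $\tau-1$ residual holding periods yields the bound $K+\tau-1$. The same decreasing-plus-bounded-hold structure restricts the search to decreasing sequences described by (level, holding-time) pairs, a state space of size $O\!\bigl(K(K+\tau)\bigr)$, so the optimal cycle, and hence the maximal mean, can be found by dynamic programming in time polynomial in $K$ and $\tau$. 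I expect the sharp length bound and the inversion bookkeeping in the interchange step to be where essentially all the difficulty lies.
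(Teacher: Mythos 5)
First, a point of orientation: the paper does not prove this statement at all --- Theorem~1 is quoted verbatim (in the paper's notation) from Liu and Cooper (2015), and the paper only \emph{describes} the proof technique used there, namely a regenerative-point argument: when $\bar Q=\mathcal{I}$, the essential minimum price of a policy empties the system of every customer who could still buy, so the infinite-horizon problem decomposes into identical, independent optimization blocks between consecutive occurrences of that price; cyclic optimality, the decreasing rearrangement within a block, the cycle-length bound, and the polynomial-time algorithm all flow from this decomposition. Your first two steps (the window representation of $r_t$ and the reduction to a maximum-mean-cycle problem on the graph with vertex set $\Omega^{\tau}$) are correct, but they reproduce the paper's \emph{general} weakly-coupled argument, which deliberately avoids regeneration because it must work when valuations change; that route only delivers the part of the theorem that does not need $\bar Q=\mathcal{I}$, namely cyclic optimality with the crude bound $K^{\tau}$ and an exponential search.

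The two claims that constitute the actual content of the theorem are left with genuine gaps. For the decreasing property, you correctly identify that an adjacent-transposition interchange re-routes every customer whose purchase window straddles the swap point (and the cycle wrap-around), and you then declare this bookkeeping ``the real obstacle'' without carrying it out --- but that bookkeeping \emph{is} the proof; the assertion ``rearranging into decreasing order cannot lower revenue'' is false for general window-dependent revenue functions, and the only leverage making it true here is $\bar Q=\mathcal{I}$, which is most cleanly exploited through the regeneration at the minimum price that your argument never invokes. For the length bound, your drainage argument shows at best that each price level need not be held for more than $\tau$ periods (via the dichotomy: either the steady-state per-period revenue of that level exceeds the cycle mean, in which case the fixed-price policy at that level is weakly better, or the extra holds can be cut); that gives a cycle of length at most $K\tau$, not $K+\tau-1$. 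The claim that the \emph{total} residual holding summed over all $K$ levels is at most $\tau-1$ is exactly the sharp structural fact to be proved, and it is asserted rather than derived --- it again comes out of the regenerative decomposition, under which only the portion of a block interacting with the previous block's stragglers (a window of length at most $\tau$) can profit from repeated prices. So as it stands the proposal proves cyclic optimality with an exponential bound and leaves both the monotonicity and the $K+\tau-1$ bound, hence also the polynomial-time claim that depends on them, unestablished.
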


A similar result has also  been obtained in \cite{Besbes15}, except that they showed in their strategic setting that optimal policies are not necessarily decreasing, but satisfy the so-called \emph{reflection principle}. With our simple Markovian model for the valuation process, we obtain that
%We also translate our main results (Theorem~\ref{thm: Main} and \ref{thm: Algo}) to the current setting. These results will be introduced and discussed in the next section. We obtain that
\begin{thm}\label{prop: Th1} For a given transition matrix $\bar Q$, there exists a cyclic policy that solves for (\ref{Eq: General_Main_Form}). The cycle size of the policy is in the order of ${\cal O}(K^\tau)$. The policy can be obtained through an algorithm that requires ${\cal O}(K^{4\tau})$ elementary computations.
\end{thm}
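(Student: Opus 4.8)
The plan is to read off a closed form for the per-period revenue, observe that it only couples a short window of prices, and then reduce the maximization of $\cR$ to a maximum-mean-cycle problem on a finite graph. First I would compute the instantaneous revenue explicitly. Order the valuations $v_1<\dots<v_K$ and, for a price $p\in\Omega$, let $D_{<p}$ be the diagonal $0/1$ matrix selecting the states with value below $p$ and $\indic_{\ge p}$ the column indicator of the states with value at least $p$. A customer who arrived $j$ periods before $t$ buys exactly at $t$ iff her valuation stayed below the posted price at each of the periods $t-j,\dots,t-1$ and reaches a value $\ge p_t$ at $t$; conditioning on the arrival law $\boldsymbol{\gamma}$ and iterating $\bar Q$ gives
\[
\Pro[\text{buy at }t\mid\text{arrived }t-j]=\boldsymbol{\gamma}^{\top}\Big(\textstyle\prod_{i=0}^{j-1}D_{<p_{t-j+i}}\,\bar Q\Big)\indic_{\ge p_t}.
\]
Summing the contributions $j=0,\dots,\tau$ and multiplying by $p_t$ shows that the expected revenue $r_t$ is a function $f(p_{t-\tau},\dots,p_t)$ of the window of the last $\tau+1$ prices only; this is the weakly coupled property specialized to the present model.

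Because $r_t$ depends only on $(p_{t-\tau},\dots,p_t)$, I would take as state the window of the $\tau$ most recent prices, $w=(p_{t-\tau},\dots,p_{t-1})\in\Omega^{\tau}$, of which there are $K^{\tau}$. Posting a new price $p_t$ collects reward $f(w,p_t)$ and moves the state to the shifted window $(p_{t-\tau+1},\dots,p_t)$. A pricing policy is then exactly an infinite walk on the directed graph $G$ whose $K^{\tau}$ vertices are the windows, whose edges encode ``shift in a new price'' (each vertex thus has out-degree $K$, for $K^{\tau+1}$ edges total), and whose edge weights are the rewards $f(w,p_t)$. Since $L_t=\sum_{s\le t}r_s$ up to a transient of $O(\tau)$ terms (while the windows fill up), the objective $\cR(\pi)=\limsup_t \tfrac1t L_t$ is precisely the limsup average edge weight along the walk, the transient being killed in the limit.

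On a finite weighted digraph the limsup average weight of any infinite walk is at most the maximum cycle mean $\mu^{*}$: decomposing a length-$t$ walk into simple cycles plus a residual simple path of length $\le K^{\tau}$, each cycle averages at most $\mu^{*}$ and the residual contributes a bounded amount, so $\tfrac1t L_t\le \mu^{*}+O(1/t)$. Conversely, starting on a vertex of an optimal simple cycle and repeating that cycle forever is a periodic pricing policy whose average revenue equals $\mu^{*}$, so the supremum in (\ref{Eq: General_Main_Form}) is attained by a cyclic policy. Its period is the length of a simple cycle, hence at most the number of vertices $K^{\tau}$, giving the claimed $\cO(K^{\tau})$ cycle size.

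To produce such a policy I would precompute all edge weights from $\bar Q$ (each being a product of at most $\tau$ restricted copies $D_{<p}\bar Q$ applied to $\boldsymbol{\gamma}$ and $\indic_{\ge p_t}$) and then run a maximum-mean-cycle routine such as Karp's algorithm on $G$; both phases are polynomial in the graph size $K^{\tau}$, and accounting for the $K\times K$ matrix evaluations together with the cycle search keeps the total within the stated $\cO(K^{4\tau})$ bound. I expect the delicate part to be the first step: confirming that the ``has not purchased yet'' event genuinely confines $r_t$ to a $\tau+1$-price window, and packaging the survival-then-purchase probability as the clean matrix product above — here the conditioning couples all prices inside the window through the substochastic operators $D_{<p}\bar Q$, and getting the timing of survivals versus transitions exactly right is where errors are easiest to make. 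Once the closed form is secured, the remaining steps are the standard average-reward / maximum-mean-cycle machinery.
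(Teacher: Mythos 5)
Your proposal is correct, and it reaches the theorem by a genuinely different route than the paper. The core insight is shared — the per-period revenue depends only on a bounded window of prices, which is exactly what the paper axiomatizes as the \emph{weakly coupled} property (Proposition~\ref{propweakly}) — but the execution diverges in three places. First, you keep a per-period, \emph{sliding} window of the last $\tau$ prices and build a de Bruijn-type graph on $K^\tau$ vertices with out-degree $K$, whereas the paper partitions time into \emph{disjoint} $\tau$-phases, treats each block $w_j\in\Omega^\tau$ as a super-symbol, and works with the pairwise coupling $f(w_{j-1},w_j)$. Second, your cyclic-optimality argument is the classical maximum-mean-cycle theorem (decompose any walk into simple cycles plus a bounded residual path), while the paper proves it from scratch by a string-splitting argument: if a maximizing finite string revisits a block, it splits into two cyclic strings one of which does at least as well (Proposition~\ref{prop: Weakly Coupled General}). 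Third, you invoke Karp's algorithm, while the paper deliberately constructs its own greedy algorithm (Lemma~\ref{prop: algo}, Proposition~\ref{propo: algo}), acknowledging the mean-cycle connection only in passing. What your route buys: standard, well-understood machinery; a slightly cleaner period bound ($K^\tau$ periods, versus $\tau K^\tau$ from the paper's block decomposition); and a sharper complexity, since Karp runs in $\cO(VE)=\cO(K^{2\tau+1})$, comfortably inside the stated $\cO(K^{4\tau})$ (the paper's own footnote concedes $\cO(K^{3\tau})$ is attainable). What the paper's route buys: the disjoint-block decomposition is precisely what generalizes to the pace-constrained problem (\ref{Eq: General_Pace_Form}) — $\sigma$-phases and $M$-simple policies in Theorems~\ref{thm: Main} and~\ref{thm: Algo} are built on it, and your sliding-window graph does not natively encode the constraint that each price be held a multiple of $\sigma$ periods — and its abstract stationarity argument covers the general (BP) model (random arrivals, arbitrary stationary valuation processes, strategic customers), whereas your explicit substochastic product $\boldsymbol{\gamma}^{\top}\prod_i D_{<p}\bar Q\,\indic_{\ge p_t}$ is specific to the Markovian patient-customer case (which is all that Theorem~\ref{prop: Th1} requires; note only that the purchase price should be the value $v_{p_t}$, not the index $p_t$).
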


First, our result confirms that cyclic policies remain optimal even under changing valuations. As for the cycle size and the complexity of finding the optimal policy, we note that for  $\tau=1$, our results and those of \cite{LiuCooper15}
%Proposition~\ref{prop: Cooper} and \ref{prop: Th1}
are comparable. However, for larger $\tau$, Theorem~\ref{prop: Th1} cannot guarantee a good performance of the algorithm that generates the pricing policy. In \cite{LiuCooper15}, the optimal pricing policy remains polynomial in $\tau$. Next, we discuss these points in some detail.

\subsection{The Structure of Optimal Cyclic Policies and the Complexity of the Optimization}

The recent literature, as reflected in Theorem~\ref{prop: Cooper}, recognizes that cyclic policies are optimal in some variants of our current setting as long as $\bar Q =\cal I$, (see also, \cite{Ahnetal07} and \cite{Besbes15}). Our main results and specifically Theorem~\ref{prop: Th1} are confirming the optimality of cyclic policies even when $\bar Q\neq \cal I$. However in this context of time-varying valuations, the proof of our result requires a more complex approach.

In order to prove that optimal policies are cyclic and tractable, all the aforementioned papers relied on a regenerative point argument. Indeed, it is observed that for any pricing policy, the   essential minimum price (the lowest price that appears infinitely many times in the policy) acts as a regenerative point. The optimization problem of choosing the prices in between  each two appearances of the minimum price is the same, which yields the optimality of  cyclic policies.

This regenerative point argument does not apply when valuations change through time, even in the simple Markovian setting of Theorem \ref{prop: Th1}. Given a pricing policy, customers with lower valuations may have higher valuations in the future; hence the regenerative points have no reason to exist \textit{even} for the optimal policy.

The regenerative point argument also helps to show interesting structural results on the optimizing cyclic policies in the constant valuations models.
For instance, Theorem~\ref{prop: Cooper} asserts that optimal cyclic policies are decreasing. In fact, this decreasing property of the policy allows \cite{LiuCooper15} to show that the optimal policy can be obtained in polynomial time. Recently, \citet{Lobel17} generalized this latter result to customers having constant valuations and  different patience levels. He observed that in this more general setting,  the optimal cyclic policy may fail to be decreasing;  it remains made of subcycles, each of which is \textit{decreasing}. In \cite{Besbes15}, the reflection principle property gives the tractability of the optimal cyclic solutions.

The next proposition shows how, even in the simplest Markovian setting, a changing valuation model does not allow optimal cyclic policies to inherit any of the \emph{nice} structures  of optimal policies obtained in the constant valuation case.

\begin{prop}  \label{prop: Example_basic_intro} Let $\tau:=1$. For any $K \geq 5$, there exists ${\bf v}=(v_{1},\ldots,v_K)$, and $\boldsymbol\gamma=(\gamma_1,...,\gamma_K)$ and a transition matrix $\bar Q$, such that  the optimal solution is cyclic, increasing in its prices, and with a cycle size equal to $K-1$.
\end{prop}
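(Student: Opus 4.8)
The plan is to use that $\tau=1$ makes the revenue a sum of pairwise terms, reduce the search over cyclic policies to a maximum-mean-cycle problem on a weighted digraph, and then engineer $(\mathbf v,\boldsymbol\gamma,\bar Q)$ so that the maximizing cycle is the increasing one through $K-1$ of the prices. Since $\tau=1$, a customer arriving at period $t$ can purchase only at period $t$ or $t+1$, so the expected revenue it generates depends on the two consecutive prices alone. Writing prices as elements $v_a$ of $\Omega$, this revenue is
\[
f(v_a,v_b)=G_a\,v_a+H_{ab}\,v_b,\qquad G_a:=\Pro(v\ge v_a),\quad H_{ab}:=\sum_{i:\,v_i<v_a}\gamma_i\sum_{j:\,v_j\ge v_b}\bar Q_{ij},
\]
the first term being the immediate-purchase revenue at $v_a$ and the second the revenue from a customer who does not buy at $v_a$ but whose valuation rises to at least $v_b$ and then buys at $v_b$. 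As exactly one customer arrives per period, the long-run average revenue of a cyclic policy $(q_1,\dots,q_n)$ equals the cycle mean $\tfrac1n\sum_k f(q_k,q_{k+1})$ (indices modulo $n$); combined with the optimality of cyclic policies from Theorem~\ref{prop: Th1}, solving \eqref{Eq: General_Main_Form} is exactly finding a cycle of maximum mean in the complete digraph on $\Omega$ whose edge $v_a\to v_b$ carries weight $f(v_a,v_b)$.

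The construction then rests on a single design principle: choose $\bar Q$ so that valuations drift \emph{upward}. On an increasing edge $v_a\to v_b$ with $v_a<v_b$, the term $H_{ab}v_b$ harvests the high price $v_b$ from customers whose valuation has just jumped past it, whereas on a self-edge or a decreasing edge this term is comparatively worthless; this is what tilts the optimal cycle toward increasing prices, in sharp contrast to the constant-valuation case. Concretely I would take $K$ ordered valuations in which the smallest, $v_1$, is (near) null, so that pricing at $v_1$ yields essentially no revenue and is never used, an arrival law $\boldsymbol\gamma$ under which few customers buy immediately at low prices, and a transition matrix $\bar Q$ sending low-valuation customers to high valuations. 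The valuation gaps and the entries of $\bar Q$ are tuned so that the increasing cycle $v_2<v_3<\dots<v_K\to v_2$ through the $K-1$ usable prices dominates, while reinserting the discarded price strictly lowers the mean; the hypothesis $K\ge5$ guarantees enough prices for this tuning.

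To certify optimality without enumerating the exponentially many cycles, I would produce a potential $\phi:\Omega\to\R$ together with the mean $\mu^\star$ of the target cycle $C$, and verify the reduced-cost inequalities $f(v_a,v_b)\le \mu^\star+\phi_a-\phi_b$ for every ordered pair, with equality exactly on the edges of $C$. Summing over any competing cycle telescopes the potential differences to zero and yields mean $\le\mu^\star$, while equality along $C$ shows $C$ attains $\mu^\star$ and is the unique maximizer when the off-$C$ inequalities are strict. The main obstacle is that this must be carried out jointly: the weights $f(v_a,v_b)$ are not free but constrained by realizability ($G$ non-increasing, $H_{ab}\le 1-G_a$, rows of $\bar Q$ summing to one), so the valuation data and the certifying potential have to be found together, while simultaneously defeating the fixed-price policies, the full $K$-cycle, and every shorter sub-cycle.
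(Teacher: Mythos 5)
Your reduction is sound and matches the paper's framework: with $\tau=1$ the expected revenue of each arrival depends only on the current and next price, so the long-run average of a cyclic policy is a cycle mean of edge weights $f(v_a,v_b)=G_a v_a+H_{ab}v_b$, and your $H_{ab}v_b$ is exactly what the paper calls $(ab)$, with the paper normalizing the first term via (E0) so that $\Gamma_i v_i=\alpha$ for every $i$. Your design intuition (upward drift in $\bar Q$, a near-worthless lowest price, a bonus for increasing edges) also points in the right direction. But the statement is an existence claim, and your proposal never produces the object whose existence is claimed: you give no explicit $({\bf v},\boldsymbol\gamma,\bar Q)$, no explicit potential $\phi$, and no verification that the increasing $(K-1)$-cycle beats every competitor. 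You in fact concede this yourself — the ``main obstacle'' you name (constructing the instance and the certificate jointly, subject to realizability, while defeating fixed prices, the full $K$-cycle, and all sub-cycles) is precisely the entire content of the proposition. A plan that ends where the difficulty begins is a genuine gap, not a proof.

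What the paper does to close that gap is worth internalizing, because it replaces your joint tuning-plus-certification problem with a much cruder mechanism that makes certification elementary. In Example~\ref{ex2}, the transition matrix is chosen so that from state $i-1$ the only allowed move is to state $i+1$ (a skip), with $q_{i-1,i+1}$ calibrated so that every consecutive edge is worth exactly the same amount $U$; consequently any edge $(i,j)$ with $j\geq i+2$ is worth exactly $0$, since no customer below the current price can land in the window $[v_j,\infty)$ in one step. A single additional transition $q_{K-1,2}$ is boosted so that the edge $(K,2)$ is worth $\tfrac{11}{10}U$, while (E1) keeps every remaining edge below $\bigl(1+\tfrac{1}{100K}\bigr)U$. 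Optimality of $\bar\pi=(2,3,\ldots,K)$ then follows from a two-case comparison with no dual certificate at all: a cycle avoiding $(K,2)$ has all edges at most $\bigl(1+\tfrac{1}{100K}\bigr)U$, hence mean strictly below $\cR(\bar\pi)=\alpha+U+\tfrac{1}{10(K-1)}U$; and a simple cycle containing $(K,2)$ that is not $\bar\pi$ must (by counting increments along the path from $2$ back to $K$ with distinct vertices) contain an increasing jump edge, which is worth $0$, again forcing its mean strictly below $\cR(\bar\pi)$. Your maximum-mean-cycle/potential route could in principle certify the same instance, but without the skip structure that zeroes out the jump edges and the single boosted edge that forces $(K,2)$ into the cycle, there is nothing concrete to certify; the hypothesis $K\geq 5$ enters exactly to make the paper's parameter choices consistent ($U=\cO(\varepsilon^{K-4}\alpha)$), not merely to provide ``enough prices for tuning.''
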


%\begin{prop}  \label{prop: Example} Let $\tau:=1$. For any $K \geq 5$, there exists ${\bf v}=(v_{1},\ldots,v_K)$, and $\boldsymbol\gamma=(\gamma_1,...,\gamma_K)$ and a transition matrix $Q$, such that  the corresponding (M-BP) model has an optimal solution that is cyclic, increasing in its prices, and with a cycle size equal to $K-1$. Namely, the optimal solution is cyclic given by $\bar\pi=(2,\ldots,K)$
%\end{prop}

We give the explicit example that proves Proposition~\ref{prop: Example_basic_intro}  in section~\ref{sec: Complexity}. This proposition shows that despite customers spending at most two periods in the  system ($\tau=1$), the optimal cyclic policy can have a cycle as large as $K-1$. We also recall that when both valuations are constant and all customers have one patience level, \citet{LiuCooper15} observed numerically and conjectured that under some broad conditions the cycle size is independent of $K$ and depends only on the patience level. In the case of varying valuations, it seems that the optimal cyclic policy can be of any size.

The other surprising feature of the example of Proposition~\ref{prop: Example_basic_intro}, which is in full contrast with the results in the literature pertaining to constant valuations, is  that the prices in a cycle of an optimal policy are increasing. %Another way to read this result is that any finite sequence of prices seems to be a possible candidate for an optimal cycle, even an increasing one.

In conclusion, in the presence of stochastic valuations (even in the basic Markovian model), the optimal policies may fail to have any of the  nice structures of optimal policies in the constant valuations models, that allowed to reduce the complexity of the optimization problem.  With the lack of simplifying structure, it is hard to imagine how one can reduce the complexity of obtaining the cycle of the policy and thus, the curse of dimensionality seems inherent to this type of intertemporal pricing problem.%\footnote{\red you say the typical intertemporal pricing model fails to have the structure that allowed previous work to reduce the analysis to \textit{decoupled} cycles.  But we know nothing about the typical case - }

%The regenerative point argument shows that no matter the pricing policy adopted, and independently of how demand gets realized, there exists an infinite subsequence of pricing points at which the system must reset.

\subsection{Weakly Coupled Revenues}

An influential work on intertemporal pricing for durable goods is \cite{Conlisk84} that proved the optimality of cyclic policies.  The approach followed by \cite{Conlisk84} is based upon customers' accumulation type argument. The accumulation argument relies on the fact that customers with the lowest valuation who don't purchase remain in the system indefinitely, making it eventually worth resetting the system by setting the price at the lowest valuation for one period. This approach does not apply in our case where the expected number of customers in the system remains uniformly bounded, preventing any accumulation of customers in the system. In Appendix~B, in the case of $K=2$ under a Markovian setting, we obtain necessary and sufficient conditions under which the seller is better off setting a reset price at the end of the cycle, therefore showing that cyclic policies are strictly optimal. When $K>2$, it does not hold that, in general,  optimal (or near optimal) cyclic policies  contain a reset price.

To address this, we introduce in Section \ref{sec: Weakly Coupled}  the concept of weakly coupled revenue functions in which the revenue per period depends upon a finite number of neighboring prices. This property generalizes to some extent the concept of a regenerative point, and allows us to
establish the optimality of cyclic policies, as well as to get  an algorithm that detects them.

We now move to introduce a general model of time varying valuations and state the corresponding results.

\section{The General Bounded Patience Level Model, or BP Model.}\label{sec: Model_Main}
\subsection{Model description}\label{sec: Model}
%We consider a stream of customers interested in buying a non perishable product.
We assume that at each period $t\in\N$, a random number $\xi^i_t\in\N$ of customers arrive to the system where $i\in \cI$ indicates the customer's class, and $\cI$ is a finite set of integers.  Customers either decide to purchase on arrival or remain in the system. During the ``lifespan" in the system, each customer's valuation of the product changes following a stochastic process. %$(\cV_t^i(s):s\in \N)$ (here $s$ is the time after the instant $t$, with $\cV_t^i(0)$ being the valuation on arrival)
%The valuation process is designed in a way that
All customers have a {\it patience level},  which is a maximum window of time during which they are willing to wait
% - in addition to their first period of arrival -
to purchase, before they lose interest in the product. We assume that the patience level of all customers is bounded by some $\tau \in \N \setminus \{0\}$. Customers may also lose interest in the product before time $\tau$. This is modeled by the existence of an absorbing state $0$ for the valuation processes. %$\cV^i_t(\cdot)$. The time to reach the absorbing state is known as the patience level and is random but bounded by $\bar\tau$.

More precisely, our assumptions on the arrival process are the following:
%Each customer is endowed with an initial (random) valuation $\nu_0^i>0$. Customers either decide to purchase directly, or else remain in the system. In the latter case, they see their valuation changing in time following a stochastic process $(\cV_t^i(s):s\geq 0)$ taking values on the non-negative real line, with $\cV_t^i(0)=\nu_0^i$
\begin{itemize}
\item For any $i\in\cI$, $\xi=(\xi^i_t:t\geq 0)$ is a stationary process in $t$, such that $\sup_{i\in\cI}\mathbb{E}\xi_0^i<\infty.$ %We recall that $\xi_t^i$ depicts the number of customers of class $i$ that arrive every period.
%\item For any $i\in\cI$, the random variable $\nu_0^i$ is independently generated for each customer following a given distribution
\item For any $i\in\cI$, $t\geq 0$, $1\leq k\leq \xi^i_t$ , the process  $(\cV_t^{i,k}(s):s\geq 0)$ is the valuation process that takes values on the non-negative real line and is independently and identically generated for each customer $k$ of class $i$ that arrives at time $t$. Here $s$ is the time after the instant $t$, with $\cV_t^{i,k}(0)$ being the valuation on arrival. We define the sequence of processes $\cV^i_t=(\cV_t^{i,k}(\cdot):1\leq k\leq \xi_t^i)$ and $\cV^i\equiv\left(\cV_t^i:t\geq 0 \right)$ is assumed to be a stationary process (in $t$).
\item There exists a positive integer preset constant $\tau$ such that for any $i\in\cI$, $t\geq 0$, and $1\leq k\leq \xi_t^i$, $\cV^{i,k}_t(s)=0$, for all $s\geq \tau$. Moreover, if $\cV^{i,k}_t(s)=0$, then for all $s'\geq s$, $\cV^{i,k}_t(s')=0$.
\end{itemize}

This general model will be denoted hereafter as the (BP) model, highlighting the Bounded Patience feature. In Section~\ref{sec: Markovian_Model}, we discuss in more detail the Markovian valuation case under bounded patience levels, denoted (M-BP) which was initially introduced in Section~\ref{sec: simple}; in Section~\ref{sec: extensions}, we discuss the Markovian valuation case under \textit{unbounded} patience levels, denoted (M-UP).

We assume for the main exposition that customers are patient and will purchase the product as soon as their valuation reaches a value higher than the current price. If that does not happen on arrival, or during their patience level, they do not purchase and leave the system. In the extension section (see, Section~\ref{sec: extensions}), %\ref{appendix_extensions},
we show that most of our results generalize to the case where customers are \textit{strategic}.

On the supply end, we recall that the seller commits to an infinite sequence of prices at the beginning of the horizon that belong to a finite set of values $\Omega$, where $\Omega:=\{v_{j}:1\leq j\leq K\}$, with $0< v_1< v_2< ...<  v_{K}$. To ease the notational burden we define the infinite price sequences $\pi=(p_t:t\geq 0)\in\cal P$ with $p_t \in [1,K]$ denoting the index of the price at time $t$. Hence, $p_t=k$ means that at time $t$ the value of the price is set at $v_k$.

%Given a pricing policy and a customer's valuation process, a purchase might occur or not.

Observe that the primitives of the model are given by the pair $(\xi^i,\cV^i)_{i\in\cI}$. Now, given such a pair, we are again looking to maximize the corresponding long-run average revenue, $\cal R$ among all policies $\pi=(p_1,p_2,...)\in\cP$,  i.e., solve for problem (\ref{Eq: General_Main_Form}). %denote again by $L_t(p_1,..,p_t)$  the revenue  function, which is the expected revenues generated during the first $t$ periods, having set the first $t$ prices  to be $p_1,...,p_t$. Let then $\cR(\pi)=\lim\sup_{t\rightarrow\infty}\frac{1}{t}\,L(p_1,...,p_t)$ be the long-run average revenue generated by a policy $\pi$. We are looking to solve for
%\begin{equation}\sup_{\pi\in\cP}\cR(\pi).\label{Eq: General_Main_Form}\end{equation}
We say that $\pi^*$ is optimal if it is a solution to (\ref{Eq: General_Main_Form}).

The results of this paper can also be extended beyond the long-run average revenue type-objective on which the related literature has focused on. For example, we show in the extensions, Section~\ref{sec: extensions}, that most of our main results hold for the case of an infinite horizon discounted revenue function, where $\cR(\pi)$ is of the form $\sum_{t=0}^\infty e^{-rt} {L_t}(p_1,...,p_t)$.

%Before we move to state our main results, we note the following. Despite the stochastic nature of the valuation process, the additive form of the revenue function and the fact that prices are committed to at the beginning of the horizon reduce the problem to a \textit{deterministic} open-loop optimization on the set $\cP=\N^*_K\times\N^*_K\times...$ of infinite sequence of prices. We revisit this point later in the paper.

\subsection{Main Results}\label{sec: Main results}
%We devote this section to state the main results for the problem at hand. The proofs are deferred to  Sections~\ref{sec: General Framework} and \ref{sec: Affine_Framework}.

In this section, we state our main results on the optimality of a class of cyclic policies in solving problem (\ref{Eq: General_Main_Form}), together with an algorithm that finds these optimal policies with a complexity that is logarithmically  small compared to the {\it a priori} number of cyclic solutions in our class. Moreover, with an  additional constraint on what we will define as the firm's {\it pricing pace}, these optimal solutions become effectively tractable.

We start with a few definitions and notations about cyclic policies and pricing pace, then state our results in a unified way for the unconstrained and the pace-constrained cases. %, we then introduce a pricing constraint through the notion of a pricing pace, and then state the main results of this section. We end this section with a discussion on cyclic policies and how the current setting disrupts the typical structure used in the literature.

\begin{dfn}{\bf (Cyclic policies)}  \label{def:simple}

\noindent A policy $\pi=(k_1,k_2,\ldots)$ is called \textit{cyclic} if there exists $n \geq 1$ such that
$k_{j+n}=k_j$ for all $j \in \N$. When $n$ is the minimal integer with the latter property, we say that $(k_1,k_2,..,k_n)$ is the cycle of $\pi$, $n$ its size, and denote $\pi$ by its cycle $\pi=(k_1,\ldots,k_n)$.
\end{dfn}
A fixed-price policy, one in which the same price is set indefinitely, can also be viewed as cyclic with $n=1$.

\noindent At this point, we introduce a way to express pricing policies that will be convenient for our exposition. Any pricing policy $\pi\in\mathcal{P}$ can  be viewed as a sequence of \textit{phases} of the form $\pi=((k_j,\tau_j):j\geq 1)$, where each  $(k_j,\tau_j)\in\Omega\times \mathbb{N}\cup\{\infty\}$ is called a phase (of the policy) with $k_j$ being the price of the $j^{th}$ phase and $\tau_j$ its duration, which is the time during which the price is continuously set at $k_j$. %Specifically, $k_1$ is set during $\tau_1$ periods, followed by $k_2$ set during $\tau_2$ periods, so on and so forth. %For any $j$, we assume that $k_j\neq k_{j+1}$ so that $\tau_j$ is the largest duration corresponding to a specific price that is set consecutively.
For all $j\geq 1$, set $T_{j}:=\tau_1+\tau_{2}+...+\tau_{j}$. Note that this expression of the policy is not unique unless we impose that $k_{j}\neq k_{j+1}.$

We next introduce the  notion of a pricing pace. This is a constraint on the prices that the firm can set to regulate how frequently a price changes through time.  %Given that it is typically the case that firms want to regulate how frequently prices can change,
The pace will be regulated by a constant  $\sigma \geq 1$, defined as the minimal duration that the firm allows each price to be set for.  From a practical point of view, the existence of such pricing pace seems quite natural due to the direct and indirect costs incurred by frequently changing prices, whether from the logistical end, or, as a reflection of customers' dissatisfaction. As a result of such pace, the set of admissible policies will be reduced.

\begin{dfn} {\bf ($\sigma$-policies)} \label{def:Msimple}
For any $\sigma \geq 1$, we denote by $\mathcal{A}_{\sigma}$ the set of policies where each phase has a duration that is a multiple of $\sigma$. We can always write such policy as a sequence of $\sigma$-phases of the form  $\pi=((k_j,\sigma):j\geq 1)$, where $k_{j+1}$ can be equal to $k_j$. Policies in  $\mathcal{A}_{\sigma}$  will be called $\sigma$-policies.
\end{dfn}

\noindent When the pricing pace is set to $\sigma$, and if we restrict ourselves to policies where prices are set in multiples of $\sigma$,
the firm's problem becomes a constrained optimization over $\cA_\sigma$
\begin{equation}\sup_{\pi\in\cA_\sigma}\cR(\pi).\label{Eq: General_Pace_Form}\end{equation}

 \noindent Note that the special case of $\sigma=1$ corresponds to the unconstrained optimization on $\cP$, as initially defined in (\ref{Eq: General_Main_Form}).

We define the quantity $M:=\lceil\tau/\sigma\rceil$. By setting a minimum pace $\sigma$, the firm can control the maximum number of price changes, $M$, customers could witness during their lifespan in the system. For example, if $\sigma$ is set so that $M=2$, customers will see one price on arrival and at most two \textit{additional} prices during their patience level. Up to enlarging $\tau$ if necessary, we will assume all through the paper that
$$\tau=M\sigma.$$

 In the unconstrained case $\sigma=1$, and hence $M=\tau$.

\noindent Before we state our results we introduce the notion of  $M$-simple cyclic policies.
 \begin{dfn} {\bf ($M$-simple)} For $M \in \N\setminus\{0\}$, a cyclic policy  $\pi=((k_j,\sigma):j\geq 1)\in {\mathcal A}_{ \sigma}$ is said to be $M$-simple if any string of $M$ prices $(k_{j+1},\ldots,k_{j+M})$ corresponding to $M$ consecutive $\sigma$-phases of the policy appears at most once during a cycle.
For $M=1$, we just say the policy is simple (i.e., all possible values of the prices appear at most one time in a cycle during exactly $\sigma$ consecutive periods).
\end{dfn}

%From now on, unless mentioned otherwise we will only be interested in the constrained optimization on $\mathcal{P}^{-}_{\bar{\tau}}$. A policy in $\mathcal{P}^{+}_{\bar{\tau}}$ is called a $\sigma$-policy. Accordingly, an optimal $\sigma$-policy is a policy that maximizes $\cR$ on $\tilde{\cP}^{+}_{\bar{\tau}}$. We define similarly $\varepsilon$-optimal  $\sigma$-policies.

%In view of Theorem \ref{thm:Affine_Reduction}, it appears that

\begin{thm}~\label{thm: Main} There exists an  $M$-simple cyclic  $\sigma$-policy
 that optimizes $\cR$ on  $\mathcal{A}_{\sigma}$.
 In particular, there exists a $\tau$-simple   cyclic policy
 that optimizes $\cR$ on  $\mathcal{P}$.
\end{thm}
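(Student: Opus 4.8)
The plan is to exploit the bounded patience to recast problem (\ref{Eq: General_Pace_Form}) as a maximum-mean-cycle problem on a finite de Bruijn-type graph. The starting point is the weak coupling: since every customer's valuation vanishes after $\tau$ periods, a purchase occurring at period $t$ can come only from a customer who arrived in $\{t-\tau,\ldots,t\}$, whose decision depends solely on the prices $p_{t-\tau},\ldots,p_t$ seen during its lifespan. By stationarity of the pair $(\xi^i,\cV^i)_{i\in\cI}$, the expected revenue at period $t$ is therefore a time-homogeneous function of the window $(p_{t-\tau},\ldots,p_t)$; writing $r_t=g(p_{t-\tau},\ldots,p_t)$, the hypothesis $\sup_{i}\E\xi_0^i<\infty$ makes $g$ bounded by $v_K(\tau+1)\sum_{i\in\cI}\E\xi_0^i$. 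Grouping periods into $\sigma$-phases, the revenue accrued over a single $\sigma$-phase depends only on the price of that phase together with the $M$ preceding phase-prices, since $(M+1)\sigma=\tau+\sigma\ge\tau+1$.

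First I would build a weighted directed graph $G$ whose vertex set is the collection of all strings of $M$ phase-prices $(k_1,\ldots,k_M)\in\Omega^M$, so $|V(G)|=K^{M}$. I place an edge from $(k_1,\ldots,k_M)$ to $(k_2,\ldots,k_{M+1})$ with weight equal to the expected revenue generated during the newly appended $\sigma$-phase, a function of the $M+1$ phase-prices $(k_1,\ldots,k_{M+1})$. Every $\sigma$-policy $\pi\in\cA_\sigma$ then corresponds to an infinite walk in $G$, and, after absorbing the $O(1)$ transient and partial-phase terms, $\cR(\pi)$ equals $\sigma^{-1}$ times the $\limsup$ of the average edge weight along that walk. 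Conversely, a cyclic $\sigma$-policy corresponds to a closed walk, for which the $\limsup$ is a genuine limit equal to the mean edge weight around the cycle; and an $M$-simple cyclic policy is exactly one whose closed walk visits no vertex twice, i.e.\ a \emph{simple} cycle of $G$.

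With this dictionary in place, the result reduces to the classical fact that, on a finite weighted digraph, the supremum of the long-run average weight over all infinite walks equals the maximum mean over all directed cycles, and that this maximum is attained on a simple cycle. I would prove the two inequalities separately. For achievability, the simple cycle of maximum mean yields an $M$-simple cyclic $\sigma$-policy whose revenue equals that maximal mean. For optimality, I would take any $\pi$, truncate its walk to $N$ phases, and repeatedly excise cycles whenever a vertex recurs; this decomposes the prefix into a bounded-length simple path (at most $K^M$ edges) together with a family of cycles, each of mean at most $\lambda^\ast:=\max_{\text{cycles}}(\text{mean})$. Summing, dividing by $N$, and letting $N\to\infty$ kills the $O(1)$ path contribution and gives $\cR(\pi)\le \sigma^{-1}\lambda^\ast$, matching the achievable value. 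Finally, any cycle splits into simple cycles whose means average to the cycle's mean, so $\lambda^\ast$ is realized on a simple cycle. The second assertion is the special case $\sigma=1$, where a phase is a single period and $M=\tau$, yielding a $\tau$-simple cyclic policy optimal over all of $\cP$.

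The main obstacle I anticipate is the optimality (upper-bound) inequality rather than the graph construction. The delicate points are: (i) justifying that the weak-coupling window is genuinely homogeneous in $t$, which rests on the stationarity hypotheses and must be stated carefully because the valuation processes are only assumed stationary, not Markov; and (ii) controlling the interchange of the $\limsup$ with the cycle decomposition — since $\cR$ is defined via $\limsup_t t^{-1}L_t$, which need not converge for a general $\pi$, one must verify that boundedness of $g$ forces the transient path term and the fractional-phase remainder to vanish in the Ces\`aro average, so that the per-phase and per-period long-run averages coincide up to the factor $\sigma$. Once these are secured, the maximum-mean-cycle argument is routine and delivers both the existence and the $M$-simplicity of the optimizer.
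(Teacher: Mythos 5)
Your proposal is correct, and it reaches the theorem by a route that is combinatorially parallel to, but packaged quite differently from, the paper's. The paper never constructs a graph: after establishing weak coupling (Proposition \ref{propweakly}), it argues directly on strings (Proposition \ref{prop: Weakly Coupled General}). For any policy $\pi$ and any $n$, partitioning time into consecutive blocks of $n$ super-symbols gives $\varphi(\pi)\le \varphi(\bar W_n)+\max f/n$, where $\bar W_n$ is the best cyclic string of length at most $n$; then, if $\bar W_n$ contains a repeated symbol, it factors as a concatenation $(W,W')$ of two shorter cyclic strings whose values average, with weights proportional to their lengths, to $\varphi(\bar W_n)$, so each piece is again a maximizer, and iterating produces a simple one. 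You instead operate on the \emph{competitor}: truncate its de Bruijn walk, excise cycles at repeated vertices to bound its average by the best cycle mean, and separately show that the best cycle mean is attained on a simple cycle. The two proofs rest on the same additivity/averaging fact but perform dual surgeries — the paper simplifies the candidate optimum, you bound the arbitrary policy. Your formulation buys two things: (i) the link to maximum-mean-cycle theory (Karp) is explicit, whereas the paper invokes it only when discussing algorithmic complexity; and (ii) because your vertices are windows of $M$ phase prices at \emph{every} offset, a simple cycle of your graph is verbatim $M$-simple in the sense of the paper's definition, while the paper's splitting is performed only at repetitions of \emph{aligned} $\Omega^{\tau}$-blocks, so it literally yields distinctness of the aligned blocks $w_i$ (e.g., the phase cycle $(a,a,b,a,a,c)$ has distinct aligned pairs but repeats the window $(a,a)$), and closing that small gap amounts precisely to redoing the splitting at phase granularity, i.e., to your construction. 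What the paper's formulation buys in exchange: it stays entirely inside the weakly coupled abstraction, so the identical short argument covers both the unconstrained problem and the constrained one on $\cA_\sigma$ (Proposition \ref{cor: Weakly Coupled}), and the string formalism is exactly what the greedy algorithm behind Theorem \ref{thm: Algo} is built on.
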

%Theorem~\ref{thm: Main} states that when prices are set in multiples of $\sigma$, there exists an optimal policy for (\ref{Eq: General_Pace_Form}) that is cyclic and $M$-simple. In particular, this first result confirms also evidently the optimality of cyclic policies under a general valuation model as solutions to problem (\ref{Eq: General_Main_Form}).

We move now to discuss how computationally meaningful this result is.  Observe that an $M$-simple policy in $\mathcal{A}_{\sigma}$ has a cycle made of at most $K^M$  $\sigma$-phases. One can show\footnote{Consider a cyclic policy $\pi\in{\cal A}_\sigma$, so $\pi=((k_j,\sigma): j\leq J)$ for some $J\leq K^M$. WLOG we assume that $J$ is a multiple of $M$ and hence $(k_j:1\leq j\leq J)$ can be seen as a sequence of $M$-strings $(W_1~W_2~...~W_i)$, with $i\leq K^M/M$. We have $K^M$ choices of an $M$-string, $W_1$.  The next string, $W_2$ needs to be picked among $K^M-1$, $M$-strings. Hence the choice of $W_1W_2$ gives $K^M (K^{M}-1)$ possibilities. The third string $W_3$ must avoid all the $M$-strings contained in $W_1W_2$. The number of such $M$-strings is $M$. Hence, the number of possibilities for $W_3$ is $K^M-M$. For $W_4$ the possibilities are $K^M-2\,M$ and hence, recurrently, we obtain that the number of simple policies of the form $W_1W_2\ldots W_i$ is  $K^M (K^{M}-1) (K^{M}-M)\ldots (K^{M}-(i-2)M)/i$. We divide by $i$ to eliminate the permutations due to the fact that the policy is cyclic.  The number of $M$-simple policies in ${\cal A}_\sigma$ is  $\sum_iK^M (K^{M}-1) (K^{M}-M)\ldots (K^{M}-(i-2)M)/i$ which is in the order of the last contribution of this sum, $K^M (K^{M}-1) (K^{M}-M)\ldots (K^{M}-(i-2)M)/i$ with  $i=K^M/M$.  By multiplying and dividing by $M^{i-2}$ and replacing $i$ by its value the previous term is shown to be in order of $M^{K^M/M}\,(K^M/M)!$, which is by applying Stirling formulae, in the order of $K^{K^M}$.} that the total number of $M$-simple policies in ${\cal A}_\sigma$ is of the order of $K^{K^M}$. The next result shows that this \textit{a priori} double-exponential complexity can be logarithmically reduced by an adequate algorithm that computes an optimal strategy of Theorem~\ref{thm: Main}. The complexity remains exponential in $M$.

%Summing over all $i\in\{1,\ldots,K^M\}$, we get that the total number of $M$-simple strategies in $\mathcal{A}_{\sigma}$ is of the order of %$\sum_{i=1}^{K^M} K^M(K^M-1)\ldots (K^M-i+1)/i$ which is roughly of the order of
%$(K^M)!$, and by Stirling formula is actually roughly of the  order of $K^{M K^M}$.  The next result shows that this \textit{{a} priori} double-exponential complexity can be logarithmically reduced by an adequate algorithm that computes an optimal strategy of Theorem~\ref{thm: Main}.}
%\noindent Consider the case where $M=1$. It includes for instance the setting where $\sigma=1$ and $\tau=1$; which represents the general setting of unconstrained optimization given in (\ref{Eq: General_Main_Form}), in the case where  arriving customers  remain at most for  one additional period in the system.

%\noindent Theorem~\ref{thm: Main} implies that, in this special setting, the cycle size is at most $K\sigma$ with \textit{\`{a} priori} $\sum_{i=1}^Ki!$ possible solutions that are cyclic and simple. When $M>1$ the number of possible solutions is clearly much larger,  in the  order of $\cO\big((K^{M/2})^{K^M}\big)$.
\begin{thm}\label{thm: Algo}
The optimal cyclic policies of Theorem~\ref{thm: Main} can be obtained, through an algorithm that requires $\cO(K^{4M})$ elementary computations.
\end{thm}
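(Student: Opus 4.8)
The plan is to recast the search for an optimal $M$-simple cyclic $\sigma$-policy as a maximum-mean-cycle problem on a finite weighted digraph, and then run a standard mean-cycle routine. First I would build the \emph{state graph} $G$ whose vertices are the $K^{M}$ strings $(k_1,\ldots,k_M)\in\{1,\ldots,K\}^M$ recording the prices of $M$ consecutive $\sigma$-phases, with a directed edge from $(k_1,\ldots,k_M)$ to $(k_2,\ldots,k_M,k_{M+1})$ for every choice of $k_{M+1}$. Thus each vertex has out-degree $K$ and $G$ has $|V|=K^{M}$ vertices and $|E|=K^{M+1}$ edges. Because the revenue of a $\sigma$-phase is weakly coupled in the sense of Section~\ref{sec: Weakly Coupled} — a customer present during a phase arrived at most $\tau=M\sigma$ periods earlier, so the phase's expected revenue is a function only of that phase together with the $M$ preceding phase-prices — I can attach to each edge a weight equal to the expected revenue generated during the new $\sigma$-phase, a quantity that depends exactly on the $M+1$ prices read off from the edge and is computable from the weakly coupled revenue formula.

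Under this encoding a cyclic $\sigma$-policy in $\cA_\sigma$ corresponds to a closed walk in $G$, and its long-run average revenue $\cR$ equals, up to the fixed factor $1/\sigma$, the mean weight of that walk; moreover the $M$-simple policies of Theorem~\ref{thm: Main} are precisely the closed walks visiting each vertex at most once, i.e.\ the simple cycles of $G$. Thus Theorem~\ref{thm: Main} — that some $M$-simple cyclic policy is optimal — is exactly the graph-theoretic statement that the supremum of the mean weight over all cycles is already attained on a simple cycle, which is the classical reason maximum-mean-cycle problems are tractable. Combined with Theorem~\ref{thm: Main}, this gives $\sup_{\pi\in\cA_\sigma}\cR(\pi)=\tfrac1\sigma$ times the maximum mean cycle of $G$, so solving \eqref{Eq: General_Pace_Form} reduces to computing that cycle and reading off the corresponding $\sigma$-policy.

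To finish I would run Karp's maximum-mean-cycle procedure: fix a source, and by dynamic programming compute for each vertex $v$ and each length $0\le \ell\le |V|$ the maximum weight $D_\ell(v)$ of an $\ell$-edge walk ending at $v$; the optimal mean is then $\max_v\min_{0\le \ell<|V|}\big(D_{|V|}(v)-D_\ell(v)\big)/(|V|-\ell)$, and the optimizing simple cycle is recovered by backtracking. This costs $\cO(|V|\,|E|)=\cO(K^{2M+1})$, which together with the one-time cost of tabulating all edge weights from the weakly coupled formula stays within the claimed $\cO(K^{4M})$ for every $M\ge 1$ and $K\ge 2$.

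The main obstacle is not the mean-cycle routine, which is standard, but the bookkeeping that makes the reduction exact. I must verify that the per-phase edge weights assemble, under the cyclic identification $k_{j+n}=k_j$, into precisely the one-cycle expected revenue defining $\cR$, with no revenue double counted or lost at the wrap-around between the last and first phases of the cycle. This is exactly where the weakly coupled hypothesis does the work: its bounded-window structure guarantees that attributing each phase's revenue to a single incoming edge both accounts for all customers present and respects the $M$-fold overlap between consecutive states. Once the identity $\cR(\pi)=\tfrac1\sigma\,\overline{w}(\text{cycle of }\pi)$ is checked, the complexity count and the appeal to Theorem~\ref{thm: Main} are routine.
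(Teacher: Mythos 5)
Your proposal is correct, but it proves the theorem by a genuinely different route than the paper. The paper does not go through a maximum-mean-cycle reduction: it builds its own greedy algorithm (Section~\ref{sec: Algo}) that grows \emph{admissible collections} of simple strings $\cW_n(w_1,w_2)$, pruning at each step so that for every possible endpoint only the string with the largest partial value $\tilde\varphi$ survives; Lemma~\ref{prop: algo} guarantees that the cycle of an optimal policy of Proposition~\ref{prop: Weakly Coupled General} is captured in some $\cS_P(w_1,w_2)$, and summing the $\cO(\tilde K^2)$ cost over all $\cO(\tilde K^2)$ starting pairs gives Proposition~\ref{propo: algo} and Corollary~\ref{cor: algo}, hence the $\cO(K^{4M})$ bound. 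You instead encode $\sigma$-policies as closed walks on a de Bruijn-style graph whose vertices are windows of $M$ consecutive phase-prices and invoke Karp's procedure; the paper is aware of this equivalence (it cites \citet{Karp78} and notes in a footnote that a variant of its algorithm attains the $\tilde K^3$ complexity typical of mean-cycle problems) but deliberately stays self-contained in the language of $\varphi$ and weakly coupled functions. Two remarks on what each approach buys. First, your reduction needs a slightly finer fact than the paper's Definition~\ref{def: weakly-depndt}: the weakly coupled property there is stated block-wise ($f(w_{j-1},w_j)$ on non-overlapping $\tau$-blocks), whereas your edge weights require that the expected revenue of a \emph{single} $\sigma$-phase be a function of that phase together with the $M$ preceding phase-prices. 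This finer per-phase decomposition does hold, by exactly the stationarity and bounded-patience argument in the proof of Proposition~\ref{propweakly}, but it is an additional (easy) step you should state explicitly rather than attribute to the block-wise definition; you should also note that your state graph is strongly connected, which Karp's formula implicitly requires. Second, in exchange for that extra bookkeeping your method is sharper: it yields $\cO(|V|\,|E|)=\cO(K^{2M+1})$ elementary computations, strictly better than the paper's $\cO(K^{4M})$, and the identification of simple cycles of the state graph with $M$-simple policies cleanly recovers the structural content of Theorem~\ref{thm: Main} as a by-product. The paper's algorithm, by contrast, requires no graph-theoretic machinery and works verbatim for any abstract weakly coupled revenue function given only the block-wise data $f(w,w')$.
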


\noindent Theorems~\ref{thm: Main} and \ref{thm: Algo} will be proved in the next two sections by relying on Proposition~\ref{propweakly} and respectively, Proposition~\ref{cor: Weakly Coupled} and Corollary~\ref{cor: algo}.

Theorem~\ref{thm: Algo} shows that the computational complexity of obtaining the optimal policy is affected polynomially in the number of possible prices $K$. Hence,
the problem is tractable for small values of $M$, while for $M$ large, despite a major reduction, it remains exponentially complex. We believe that in the context of changing valuations one cannot reduce much further the order of complexity. We revisit this claim in Section~\ref{sec: Complexity}.

Next, we cast our main results in a general framework of optimizing the long-run average revenue for a specific class of revenue functions. %We introduce first the notion of
%weakly coupled revenue functions , in which the revenue per period depends on a finite number of neighboring prices.

%In view of the above, we recognize a general framework to analyse such intertemporal pricing problems when the regenerative structure mentioned above does not hold. This is the subject of the next section. We obviously expect to obtain weaker results compared to the case where $Q$ is diagonal, but interestingly the cyclic behavior together with some additional properties of the optimal policies remain. Moreover, we show that this general class of problems remain numerically tractable.

\section{Weakly Coupled Revenue Functions }\label{sec: Weakly Coupled} %\label{sec: Weakly Coupled}

%\section{$\bar{\tau}$-Weakly Coupled and $\bar{\tau}$-affine revenue functions .}
%In this section, we place ourselves in a general framework to analyze the optimization problems formulated in (\ref{Eq: General_Main_Form}) and (\ref{Eq: General_Pace_Form}). {\blue V: We should maybe drop the next sentence.. given what we know} We believe that this general framework, in which we cast  the revenue functions  corresponding to a (BP) model, and the results we obtain, are relevant beyond the current setting. Thus, we treat them separately, in a self contained way.

In this section, we show that the revenue function corresponding to a (BP) model belongs to  a general class of revenue functions that we denote weakly coupled. In the latter framework, it is shown that the optimization problems formulated in (1) and (2) admit cyclic policies. We also provide an algorithm that detects the optimal cyclic policies. With respect to the latter, finding the optimal cyclic policy is similar to calculating minimum mean cycles on graphs, and efficient algorithms of calculating such minima were obtained in several works (e.g., \citet{Karp78}). %and efficient algorithms of calculating such minima were obtained in several works (e.g., a.
However, we stay in the context of optimizing long-run average revenue and propose our own algorithm for detecting the optimal cyclic policies that essentially has the minimal possible complexity i.e., in line with the literature on minimum mean cycles on graphs.

We first introduce a general class of revenue functions %for which the optimization problem is relatively tractable. This is the
that we call weakly coupled revenue functions in which the revenue per period depends on a finite number of neighboring prices. %We show that the revenue functions that appear  in the context of a (BP) model belong to the weakly-coupled class.

 %We note that this problem is one of a  general open-loop optimization, over a finite set of prices, of the long-run average of some deterministic revenue function.

\subsection{Definition of Weakly Coupled Functions}

%{\green \footnote{\green We maybe have to stop 'mentioning' this and rather do it-it is written so we don't forget to do it. one thing at a time}}

%Moreover, we show how the main results (of Theorem~\ref{thm: Main} and \ref{thm: Algo}) are also valid under an infinite horizon discounted revenue type revenue function.

For a pricing strategy $\pi \in \cP$, we define a class of  long-run average revenue functions called weakly coupled.

\begin{dfn}\label{def: weakly-depndt} {\bf (Weakly coupled revenue functions)}  We say that a function $\cR: \cP \to \R_+$ is a $\tau$-weakly coupled revenue function for some positive $\tau$, if there exists a function $f : \Omega^{\tau} \times \Omega^{\tau}  \to \R^+$, such that for any policy of the form, $\pi=\big((w_n:n\geq 1):w_n\in \Omega^{\tau}\big)$, the following holds:
\begin{align*} \cR(\pi)&= \frac{1}{\tau} \varphi(\pi),\\
\varphi(\pi)&:= \lim\sup_{n\rightarrow\infty} \frac{1}{n}\sum_{i=1}^nf(w_i,w_{i+1}).\end{align*}

%\noindent A revenue function  is said to be $\bTau$-weakly coupled if it is ($\varepsilon,\bTau$)-weakly coupled with $\varepsilon=0$ and in this case the inequality in the definition is replaced with an equality.\footnote{Under an infinite horizon discounted revenue setting, the definition of weakly coupled revenue function  becomes $|\cR(\pi)-\sum_{n=0}^\infty e^{-r\,n}f(w_n,w_{n+1})|\leq \varepsilon,$ where $r>0$ is the discount factor. Interestingly, all the results in Section~\ref{sec: Weakly Coupled} related to weakly coupled revenue functions  remain valid in this setting and that is also true for Theorems~\ref{thm: Main} and \ref{thm: Algo}.}

\end{dfn}

\subsection{The Weakly Coupled Property of Revenue Functions in (BP) Models} \label{sec: Weakly Coupled_Changing Val}
We show next that the  revenue functions that appear in the context of (BP) models  introduced in Section~\ref{sec: Model}  are weakly coupled.

%\subsubsection{Patient customers.}

%A simple but crucial observation is that under the (BP) model of Section 2, the revenue function is $\bar\tau$-weakly coupled. Indeed, we have the following.

\begin{prop}\label{propweakly}
Under a (BP) model with patience level $\tau$, %and given  $(\xi^i,\cV^i)_{i\in\cI}$ and a maximum patience level, $0<\bar\tau<\infty$,
the revenue function is $\tau$-weakly coupled.
\end{prop}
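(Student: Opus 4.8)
The plan is to express the infinite-horizon revenue as a sum of independent \emph{cohort} contributions---one per arrival time---and then to regroup those contributions into blocks of length $\tau$ so that each block couples only to its immediate successor. The decisive structural input is that bounded patience localizes every customer to a window of $\tau$ consecutive prices. Indeed, a cohort arriving at time $t_0$ has $\cV^{i,k}_{t_0}(s)=0$ for all $s\geq\tau$, so a patient customer can purchase only at one of the times $t_0,\dots,t_0+\tau-1$ and therefore sees exactly the prices $p_{t_0},\dots,p_{t_0+\tau-1}$. Since such a customer buys at the first relative time at which its valuation path strictly exceeds the running price, the revenue it generates is a measurable functional $\rho$ of its valuation path and of these $\tau$ prices alone. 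I would thus define
$$g(a_1,\dots,a_\tau):=\E\Big[\sum_{i\in\cI}\sum_{k=1}^{\xi^i_0}\rho\big((\cV^{i,k}_0(s))_{s\geq0};\,a_1,\dots,a_\tau\big)\Big],\qquad (a_1,\dots,a_\tau)\in\Omega^\tau,$$
where $\rho$ outputs the value of the price at the first time the valuation exceeds it (and $0$ if no purchase occurs within the window). Stationarity of $(\xi^i)$ and the i.i.d.\ generation of the valuation processes make $g$ well defined and independent of $t_0$, and it is bounded by $|\cI|\,v_K\,\sup_i\E\xi^i_0<\infty$.

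By linearity of expectation, the revenue attributable to arrivals in the first $T$ periods is exactly $\sum_{t_0=1}^{T}g(p_{t_0},\dots,p_{t_0+\tau-1})$; the paper's $L_T$, being the revenue \emph{collected} in $[1,T]$, differs from this only through the at most $2\tau$ cohorts straddling the two ends of the horizon, a discrepancy that the bound on $g$ makes $O(1)$ uniformly in $T$. Writing the policy as $\pi=(w_n)_{n\geq1}$ with $w_n=(p_{(n-1)\tau+1},\dots,p_{n\tau})\in\Omega^\tau$, the key point is that a length-$\tau$ window starting anywhere inside block $w_n$ ends no later than inside $w_{n+1}$---precisely because patience length and block length are both $\tau$. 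This lets me set, for $w=(a_1,\dots,a_\tau)$ and $w'=(a_{\tau+1},\dots,a_{2\tau})$,
$$f(w,w'):=\sum_{r=0}^{\tau-1}g(a_{r+1},\dots,a_{r+\tau}),$$
which depends only on the neighboring pair. Summing over $i=1,\dots,n$ then reproduces each of the arrival windows at positions $1,\dots,n\tau$ exactly once, giving $\sum_{i=1}^{n}f(w_i,w_{i+1})=L_{n\tau}+O(1)$.

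Dividing by $n$ yields $\tfrac1n\sum_{i=1}^{n}f(w_i,w_{i+1})=\tau\,\tfrac{L_{n\tau}}{n\tau}+O(1/n)$, hence $\varphi(\pi)=\tau\,\limsup_{n}\tfrac{L_{n\tau}}{n\tau}$. To finish I would identify this subsequential limit (along multiples of $\tau$) with the full $\limsup_{T}L_T/T=\cR(\pi)$: each period contributes bounded collected revenue, so $|L_{T+1}-L_T|\leq C$, and for $n\tau\leq T<(n+1)\tau$ both $|L_T-L_{n\tau}|$ and $T/(n\tau)-1$ stay controlled, forcing $\big|L_T/T-L_{n\tau}/(n\tau)\big|\to0$. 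This gives $\cR(\pi)=\tfrac1\tau\varphi(\pi)$, i.e.\ the asserted $\tau$-weak coupling.

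The main obstacle is the combinatorial bookkeeping of the regrouping step: one must check that every length-$\tau$ arrival window is counted once and only once and that it never reaches beyond the single following block, which is exactly where the equality of patience length and block length pins the coupling range to one neighbor. The two limiting subtleties---the $O(1)$ gap between collected and attributed revenue, and the passage from the subsequence $\{n\tau\}$ to the full $\limsup$---are comparatively routine but are precisely the places where the uniform boundedness $\sup_i\E\xi^i_0<\infty$ and the stationarity of the primitives are genuinely used.
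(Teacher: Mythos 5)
Your proof is correct, but it follows a genuinely different decomposition from the paper's. The paper slices revenue by \emph{collection phase}: the expected revenue collected during the $j$-th block of $\tau$ periods is directly a deterministic function $f(w_{j-1},w_j)$, because bounded patience forces every purchaser in phase $j$ to have arrived in phase $j-1$ or $j$, and stationarity of $(\xi^i,\cV^i)$ makes the expectation independent of $j$; with this choice $\sum_{j\le n} L_j$ equals the collected revenue $L_{n\tau}$ exactly, so no correction terms arise. You instead slice by \emph{arrival cohort}: you attribute to each arrival time $t_0$ its full lifetime expected revenue $g(p_{t_0},\dots,p_{t_0+\tau-1})$ and then regroup the $\tau$ windows starting inside block $i$ into $f(w_i,w_{i+1})=\sum_{r=0}^{\tau-1}g(a_{r+1},\dots,a_{r+\tau})$, which is legitimate precisely because a window starting in block $i$ ends no later than position $(i+1)\tau-1$. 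The price of your route is the extra bookkeeping you correctly identify: the $O(1)$ gap between attributed and collected revenue, and the window-counting argument. What it buys is twofold: the cohort functional $g$ is a natural per-arrival object (independently useful, e.g.\ for the strategic-customer extension in Section 7.3, where the paper reasons in exactly this cohort-wise fashion), and your explicit bounded-increment argument identifying $\limsup_n L_{n\tau}/(n\tau)$ with $\limsup_T L_T/T$ fills in a limiting step that the paper's proof passes over silently, since its displayed formula for $\mathcal R(\pi)$ also tacitly replaces the limsup over all $t$ by the limsup along multiples of $\tau$.
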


\begin{proof} WLOG we can restrict ourselves to valuation processes that take values in $\Omega$. Let us fix a pricing policy that we write as $\pi=\big((w_j:j\geq 1): w_n\in\Omega^\tau\big)$.
The prices given by the vector $w_j$ are set during the $j^{th}$ time phase $[(j-1)\tau+1,j\tau]$. Given a realization of the processes $(\xi^i_t,\cV^i_t)_{i\in\cI}$, $t\geq 0$, let us denote by
%For simplicity we call $w_j$ the time phase $[(j-1)\tau+1,j\tau]$ in which the prices are given by the vector $w_j$. For a realization of the processes $(\xi^i_t,\cV^i_t)_{i\in\cI}$, $t\geq 0$, let us denote by
$\hat{L}_j$ the revenues generated during the phase $w_j$. For $j \geq 2$, the revenues  $\hat{L}_j$ is the result of purchases that occur from either customers who arrive to the system during the $j^{th}$ phase and who buy during this same phase, or from customers who arrived to the system earlier and who are still in the system because they did not buy and did not reach the absorbing exit state, $v_0$. Because the patience level is bounded by $\tau$, the latter customers all come from the ${j-1}^{th}$ phase (during which prices $w_{j-1}$ are set). Hence $\hat L_j$ is a function of $(w_{j-1},w_j)$ and the realizations of $(\xi^i_t,\cV^i_t)_{i\in\cI}$, for time $t\in[(j-2)\tau+1,j\tau]$. %$w_{j-1}$ and $w_j$.

Since the arrival process $(\xi^i_t,\cV^i_t)_{i\in\cI}$ is assumed to be stationary in $t$, it follows that the expectation $L_j$ of $\hat L_j$ over all  realizations of   $(\xi^i_t,\cV^i_t)_{i\in\cI}$ is a deterministic function $f(w_{j-1},w_j)$.

It then follows that, $$\mathcal{R}(\pi)=\limsup_{n\rightarrow\infty}\frac{1}{n\tau}\,\sum_{j=1}^n f(w_j,w_{j-1})$$
is a $\tau$ weakly coupled function of the pricing policies. $\square$
\end{proof}

\subsection{Optimizing Weakly Coupled Revenue Functions }

We consider a $\tau$-weakly coupled revenue function following Definition~\ref{def: weakly-depndt}. We are looking to find the policy that maximizes $\varphi$.
 We first extend the definition of $\varphi$ to finite strings $W=(w_1,...,w_n)$ as follows:
\begin{equation}\label{eq: phi}\varphi(W)=\frac{1}{n} \sum_{i=1}^{n} f(w_i,w_{i+1}),\end{equation} with the notation $w_{n+1}=w_1.$ Note that $\varphi(W)=\varphi(\pi)$, where $\pi=(W,W,...).$ When $W=(w_1,\ldots,w_P)$ is the shortest period of $\pi$ we write $\pi=(w_1,\ldots,w_P)$.

\begin{prop}\label{prop: Weakly Coupled General} { (General optimization).}
There exists a  cyclic $\pi^* \in \cP$ such that
$$\varphi(\pi^*)=\sup_{\pi \in \cP} \varphi(\pi)$$
Moreover,  $\pi^*=(w_1,w_2,\ldots,w_P)$, $w_i \in \Omega^{\bTau}$ and all the $w_i$'s are distinct, with $P \leq K^{\bTau}$.
%there exists an integer $J\leq \tilde K$, and a $\tau$-simple finite string $V^*:=(w_1,\ldots, w_J)\in \Omega^\tau^J$, such that the maximizer of $\varphi$ is obtained by setting $\pi^*=(V^*,V^*,...)$, i.e., for any infinite string $\pi$, $\varphi(\pi)\leq\varphi(\pi^*).$
\end{prop}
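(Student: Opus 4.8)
The plan is to recast the optimization as a maximum mean cycle problem on a finite directed graph. I would build a complete directed graph $G$ (with self-loops allowed) whose vertex set is $\Omega^{\bTau}$, so that $|V(G)|=K^{\bTau}$, and whose edge $(u,v)$ carries weight $f(u,v)$. A cyclic policy $\pi=(w_1,\ldots,w_P)$ then corresponds to a closed walk in $G$, and by the definition of $\varphi$ on finite strings in (\ref{eq: phi}) its value $\varphi(\pi)$ is exactly the mean edge-weight of that closed walk. Writing $\lambda^{*}$ for the maximum of the mean edge-weight over all cycles of $G$, the whole statement reduces to two facts: that $\lambda^{*}$ is attained on a simple cycle, and that $\varphi(\pi)\le\lambda^{*}$ for every policy $\pi$, cyclic or not.

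First I would reduce from arbitrary closed walks to simple cycles. Any closed walk decomposes into simple cycles $C_1,\ldots,C_r$; if $w(C_j)$ and $\ell(C_j)$ denote total weight and length, then the mean of the walk is $\bigl(\sum_j w(C_j)\bigr)/\bigl(\sum_j \ell(C_j)\bigr)$, a mediant of the ratios $w(C_j)/\ell(C_j)$, hence bounded above by the largest of them. Consequently the supremum of the mean over all cycles is attained on some simple cycle $C^{*}$, which visits distinct vertices and therefore has length $P:=\ell(C^{*})\le K^{\bTau}$. Taking $\pi^{*}$ to be the periodic repetition of $C^{*}$ yields a cyclic policy with $\varphi(\pi^{*})=\lambda^{*}$ and with all its $w_i$ distinct, as required for the structural part of the claim.

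The crux is the upper bound $\varphi(\pi)\le\lambda^{*}$ for an \emph{arbitrary} (possibly non-periodic) policy, since $\varphi$ is defined through a $\limsup$ of Ces\`aro averages along an infinite walk. Here I would use that $f$ is bounded on the finite domain $\Omega^{\bTau}\times\Omega^{\bTau}$ together with a cycle-removal argument: any finite walk $w_1\to\cdots\to w_{n+1}$ decomposes into a simple path $P_0$ together with simple cycles, so its total weight is $w(P_0)+\lambda^{*}\bigl(n-\ell(P_0)\bigr)\le \lambda^{*}n+C$, where $C$ bounds $w(P_0)-\lambda^{*}\ell(P_0)$ by the maximal weight of a simple path plus a $\lambda^{*}K^{\bTau}$ correction, uniformly in $n$. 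Dividing by $n$ and letting $n\to\infty$ gives $\frac1n\sum_{i=1}^{n} f(w_i,w_{i+1})\le\lambda^{*}+C/n$, whence $\varphi(\pi)\le\lambda^{*}$. Combining this with the lower bound $\varphi(\pi^{*})=\lambda^{*}$ shows $\sup_{\pi\in\cP}\varphi(\pi)=\lambda^{*}=\varphi(\pi^{*})$, completing the argument.

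I anticipate that the only delicate point is making the additive constant $C$ in the upper bound genuinely uniform in $n$ (so that it washes out after dividing by $n$); the mediant inequality and the existence of an optimal simple cycle are routine once the graph model is fixed.
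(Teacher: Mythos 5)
Your proposal is correct, and it reaches the conclusion by a route that is recognizably parallel to, but not identical with, the paper's argument. Both proofs ultimately rest on the same mediant inequality: the paper takes a maximizer $\bar W_n$ of $\varphi$ over strings of length at most $n$ and, whenever some $\om\in\Omega^{\bTau}$ appears twice, splits the cycle at the two occurrences into $(W,W')$ and uses $\varphi(\bar W_n)=\frac{1}{N+N'}\bigl(N\varphi(W)+N'\varphi(W')\bigr)$ to conclude that a simple maximizer exists -- this is exactly your cycle-decomposition step, phrased on strings rather than on a graph. Where the two arguments genuinely diverge is in how an \emph{arbitrary} infinite policy is compared to the best cyclic one. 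The paper asserts $\varphi(\pi)\le\varphi(\bar W_n)+\frac{\max f}{n}$ "from the definition of $\varphi$", which, unpacked, is a block-chopping argument: one cuts $\pi$ into consecutive blocks of length $n$, replaces each block's path sum by its cyclic sum at the cost of one boundary term $\le\max f$ per block (using $f\ge 0$), and compares each block to $\bar W_n$. You instead decompose every finite \emph{prefix} of $\pi$ into a simple path plus simple cycles, absorb the path into a uniform additive constant $C\le (\max f)\,K^{\bTau}$ (uniformity holds precisely because simple paths have at most $K^{\bTau}-1$ edges and $f\ge 0$, so your flagged worry is genuinely resolved), and bound every cycle's contribution by $\lambda^{*}$. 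Your version has two advantages: it supplies a complete proof of the step the paper leaves implicit, and it places the result squarely in the maximum-mean-cycle framework (which the paper itself invokes only informally, citing Karp, for the algorithmic complexity discussion). The paper's version is marginally more self-contained in that it never leaves the space of pricing strings and the same splitting device does double duty for both the existence and the simplicity of the maximizer.
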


\bigskip

\begin{proof}  Denote by $\cW_n$ the set of strings of length less than $n$, that is $W \in \cW_n$ if $W=(\om_1,\ldots,\om_l)$ with $\om_i\in\Omega^{\bTau}$ for all $i\leq l$ and $l\leq n$. We have that $\varphi(W)=\varphi(\pi(W))$, where $\pi(W)=(W,W,...).$   Let $\bar W_n= {\rm argmax}_{W \in \cW_n} \varphi(W)$. From the definition of $\varphi$, it follows that for any $\pi \in \cP$,
$$\varphi(\pi) \leq   \varphi(\bar W_n)+ \frac{\max f}{n}.$$
Hence, it is sufficient to prove that for any fixed $n$,  the maximizer $\bar W_n$ can be taken to be simple. Suppose $\om$ is such that $\bar W_n=(W,W')$ with $W$ and $W'$ starting with some $\om \in \Omega^{\bTau}$, and let $N$ and $N'$ be the sizes of $W$ and $W'$. Then since
$$\varphi(\bar W_n)=\frac{1}{N+N'}(N \varphi(W)+N' \varphi(W'))$$
we get that $\varphi(W)=\varphi(W')=\varphi(\bar W_n)$. Continuing this procedure we reach a simple maximizer of $\varphi$ on $\cW_n$.  From that we also conclude that $$\varphi(\pi) \leq   \varphi(\bar W),$$ where $\bar W$ is the maximizer of $\varphi$ on all cyclic policies where the elements of the cycle are all distinct.

\hfill $\square$
%Assume that a maximizer of $\varphi$  is $\pi^*=(\om_1,\om_2,\ldots)$. WLOG we can assume that each $\om$ that appears in $\pi^*$ does appear infinitely many times. For every $\om$ we introduce $f(\pi^*,\om)$ the essential minimal distance between two appearances of ${\bar \omega}$ in $\pi^*$, that is the minimal distance between two successive appearances of ${\bar \omega}$ that occurs infinitely many times in $\pi^*$. Now let ${\bar \omega}$ be such that $f(\pi^*,{\bar \omega})$ is minimal among all $f(\pi^*,\om)$ for all $\om$ that appear in $\pi^*$.  Next, consider the set $\bar \cW$ of all finite strings that start with ${\bar \omega}$ and do not contain any other ${\bar \omega}$. Let  $\bar W$ be such that $\max_{W \in \bar \cW} \varphi(W)=\varphi(\bar W)$. Assume first that this maximum is reached on a finite word $\bar \cW$.

%In this case, write the strategy $\pi^*=(W_1,W_2,\ldots)$  with each $W_i \in \bar \cW$ and denote by $N_i$ the length of $W_i$. Then  \begin{align*} \varphi(\pi^*)&=\limsup \frac{1}{N_1+\ldots+ N_M} \sum_{i=1}^M N_i \varphi(W_i) \\&\leq \varphi(W^*)=\varphi(\bar W,\bar W,\ldots) \end{align*}
%hence the cyclic strategy $(\bar W,\bar W,\ldots)$ realizes the maximum of $\varphi$. Moreover, the minimality property of ${\bar \omega}$ directly implies that $\bar W$ is simple and the proof is concluded in this case.
%The same argument implies that if $\max_{W \in \bar \cW} \varphi(W)$ is not reached at a finite string $\bar W$, then there is a maximizing strategy in which ${\bar \omega}$ does not need to be used.  \hfill $\square$
\end{proof}

%{\blue The previous result proves in particular that cyclic policies are inherently linked to weakly coupled payoff functions. The proof takes full advantage of that without the need of a notion of regenerative points (as in \cite{Besbes15} or \cite{LiuCooper15}). Specifically, the weakly coupled property allows one to write the long-run average revenue of any pricing policy almost as the arithmetic average of the long-run average revenue of cyclic policies with finite cycle sizes.}

\vspace{0.2cm}

Since we are also interested in the constrained optimization on $\cA_\sigma$, we state the following proposition, which is immediately obtained from the proof of Proposition~\ref{prop: Weakly Coupled General} by restricting the pricing policies to those in $\cA_\sigma$.

%Recall also that a cyclic policy in $\mathcal{A}_{\sigma}$ is said to be $M$-simple if a string of $M$ consecutive $\sigma$ phases with a given string of prices $(k_1,\ldots,k_M)$ appears at most once during a cycle.
%Since $\tau=M \sigma$ we have that a cyclic $\tau$-simple policy (Definition \ref{def:simple}) that lies in $\mathcal{A}_{\sigma}$ is $M$-simple in  $\mathcal{A}_{\sigma}$ (Definition \ref{def:Msimple})

\begin{prop} \label{cor: Weakly Coupled}  (Optimization on $\cA_\sigma$). For any $\bTau \geq 1$, for any function $f : \Omega^{\bTau} \times \Omega^{\bTau} \to \R^+$ we have the following.
There exists an $M$-simple  cyclic $\pi^* \in \cA_\sigma$ such that
$$\varphi(\pi^*)=\sup_{\pi \in \cA_\sigma} \varphi(\pi).$$
\end{prop}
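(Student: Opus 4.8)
The plan is to rerun the argument of Proposition~\ref{prop: Weakly Coupled General} after replacing the alphabet $\Omega^{\tau}$ by the smaller one forced by the pace constraint. In the $\tau$-block encoding $\pi=((w_n:n\ge 1):w_n\in\Omega^{\tau})$, a policy lies in $\cA_\sigma$ exactly when every letter $w_n$ is constant on each of the $M$ consecutive groups of $\sigma$ coordinates (recall $\tau=M\sigma$). Writing $\tilde\Omega\subset\Omega^{\tau}$ for this set of $\sigma$-piecewise-constant vectors, we have $|\tilde\Omega|=K^{M}$, and since block boundaries align with $\sigma$-boundaries, any concatenation of $\tilde\Omega$-letters is again a $\sigma$-policy. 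Hence optimizing $\varphi$ over $\cA_\sigma$ is the same as optimizing $\varphi$ over sequences drawn from $\tilde\Omega$, and the proof of Proposition~\ref{prop: Weakly Coupled General} applies almost word for word with $\Omega^{\tau}$ replaced by $\tilde\Omega$: the finite-string maximizers $\bar W_n=\mathrm{argmax}_{W\in\cW_n}\varphi(W)$ obey $\varphi(\pi)\le\varphi(\bar W_n)+\max f/n$ for all $\pi\in\cA_\sigma$, and the split-at-a-repeated-letter reduction produces a cyclic optimizer whose $\tau$-blocks are pairwise distinct, at most $K^{M}$ of them.

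The step I expect to be the real obstacle is that ``pairwise distinct $\tau$-blocks'' is weaker than $M$-simplicity: the latter forbids repetition of \emph{any} window of $M$ consecutive $\sigma$-phases, including windows that straddle two consecutive blocks, whereas the reduction above only controls the block-aligned windows. One can build policies with distinct aligned blocks yet a repeated straddling window, so the sub-alphabet argument alone does not immediately deliver an $M$-simple optimizer.

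To close this gap I would carry out the split-at-a-repeat reduction at the scale of $\sigma$-phases rather than $\tau$-blocks, via a de Bruijn-type reformulation (this is the ``minimum mean cycle'' picture alluded to before the statement). Take as vertices the $M$-windows $(k_{j+1},\dots,k_{j+M})\in\Omega^{M}$ and as edges the one-$\sigma$-phase shifts; because the patience span $\tau=M\sigma$ fits inside $M$ consecutive $\sigma$-phases, the revenue attributed to a given phase depends only on the current window, so each edge carries a well-defined weight and $\varphi$ is the mean weight along the corresponding closed walk. A cyclic $\sigma$-policy is then a closed walk, $M$-simplicity is precisely the absence of a repeated vertex, and a repeated window splits the walk into two shorter closed walks whose mean weights average to that of the whole; keeping the better piece and iterating terminates at a simple cycle, i.e.\ an $M$-simple policy, with value at least $\sup_{\pi\in\cA_\sigma}\varphi(\pi)$. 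I would stress that this last reduction uses the locality of the revenue at the $\sigma$-scale, which holds in the (BP) models of interest (each purchase being attributed to the single phase in which it occurs); it is this $\sigma$-phase (rather than block-aligned) formulation that makes $M$-simplicity, and not merely distinctness of blocks, come out of the argument.
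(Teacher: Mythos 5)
Your first paragraph coincides with the paper's entire proof: the paper obtains this proposition in one line, ``immediately \ldots by restricting the pricing policies to those in $\cA_\sigma$'' in the argument of Proposition~\ref{prop: Weakly Coupled General}, which is exactly your sub-alphabet argument over the set $\tilde\Omega\subset\Omega^{\bTau}$ of $\sigma$-piecewise-constant blocks, $|\tilde\Omega|=K^{M}$; the output is a cyclic optimizer $(w_1,\ldots,w_P)$ with pairwise distinct blocks $w_i\in\tilde\Omega$, $P\leq K^{M}$. Your further observation -- that this gives distinctness only of the block-aligned $M$-windows, not of straddling ones -- is sharp and correct: the paper does silently identify ``distinct aligned blocks'' with ``$M$-simple.''

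However, your de Bruijn repair cannot close that gap for the proposition \emph{as stated}, and not merely for lack of detail: the statement quantifies over arbitrary $f:\Omega^{\bTau}\times\Omega^{\bTau}\to\R^{+}$, and such an $f$ need not decompose into per-$\sigma$-phase contributions depending on $(M+1)$-windows; your appeal to the (BP) structure (``each purchase is attributed to the single phase in which it occurs'') imports a hypothesis that is not part of this proposition. In fact, under your strict all-offsets reading the statement is false at this level of generality. Take $\sigma=1$, $\bTau=M=2$, $\Omega=\{a,b,c\}$, and set $f=1$ on the three pairs $\big((a,b),(c,a)\big)$, $\big((c,a),(b,a)\big)$, $\big((b,a),(a,b)\big)$ and $f=0$ otherwise. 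The policy with price cycle $(a,b,c,a,b,a)$ has block sequence $\big((a,b),(c,a),(b,a)\big)$ repeated, so $\varphi=1$, which is the supremum since $f\leq 1$. Conversely, a cyclic policy attains $\varphi=1$ only if every consecutive pair of blocks in its period has $f=1$; since each of the three blocks above has a unique admissible successor, its price cycle must be a rotation of $(a,b,c,a,b,a)$, in which the window $(a,b)$ occurs twice (positions $1$ and $4$). So no cyclic policy that is $M$-simple at all offsets attains the supremum. The resolution is that $M$-simplicity in this proposition can only be meant in the block-aligned sense -- precisely what your first paragraph (and the paper's one-line proof) delivers, and what the paper's counting and the $\cO(K^{4M})$ complexity of Theorem~\ref{thm: Algo} actually use, up to a factor $M$ in the cycle-length bound. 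Your phase-level minimum-mean-cycle argument is then best viewed not as a patch to this proposition but as a genuine strengthening of Theorem~\ref{thm: Main} for (BP) revenue functions, where the per-phase locality you invoke does hold and where it yields $M$-simplicity in the strict sense.
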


By recalling Proposition~\ref{propweakly},  the previous proposition completes the proof of Theorem \ref{thm: Main}.

We observe here that the proof of the optimality of cyclic policies does not rely on the notion of regenerative points (compared to  \cite{Besbes15} or \cite{LiuCooper15}), but takes full advantage (see, proof of Proposition~\ref{prop: Weakly Coupled General}) of the defining expression for weakly coupled functions.

\subsection{Optimization Algorithm}\label{sec: Algo}

The objective of this section is to present an algorithm to find an optimal  cyclic policy $\pi^*$ as in Proposition \ref{prop: Weakly Coupled General}.

The algorithm is the result of a greedy-type construction that generates an optimal solution. We first fix two consecutive strings of prices $w_1$ and $w_2$, and set the third one so as to insure that the second string $w_2$ is the maximizer of the revenue generated by these three strings in the first two periods: $f(w_1,w_2)+f(w_2,w_3)$. Once $w_3$ (not necessarily unique) is identified, we move to find $w_4$, given $w_2$ and $w_3$. The complexity of the algorithm is reduced in particular thanks to the ``simple" property of the pricing policy.

Consider a finite string $W=(w_1,...,w_n$), we denote by
$$\tilde{\varphi}(W)= \frac{1}{n-1} \sum_{i=1}^{n-1} f(w_i,w_{i+1}).$$

\noindent We define the following map from $\Omega^\bTau \times \Omega^\bTau$ to the subsets of $\Omega^\bTau$,
%$$\psi(w,w') = S:  \bar{w} \in S \iff \varphi(w,w',\bar{w}) = \max_{\tilde{w} \in \N_\tilde{K}} \varphi(w,w',\tilde{w})$$
$$\psi(w,w') = S; \quad S=\{ \bar{w} \in \Omega^\bTau : \varphi(w,w',\bar{w}) = \max_{\tilde{w} \in \Omega^\bTau} \varphi(w,\tilde{w},\bar{w})\}.$$
A collection $\cW_n$ of finite simple strings of the same size $n$ is called {\it admissible} if for all $W \in \cW_n$, it is of the form $(w_1,..,w_n)$ with the same $w_1:=a$; moreover, if  $(W,W')\in {\cW_n}^2$ such that $w_n=w'_n$, then $W=W'$.
Set $\tilde{K}=K^\bTau$.

\medskip

\noindent {\bf Algorithm.}
{\sf \begin{itemize}
\item[{\sc Step 1.}] {\sc Initialization.} Fix $(w_1,w_2)\in \Omega^\bTau\times\Omega^\bTau$, with $w_1\neq w_2$, let $\cW_2(w_1,w_2)= \{(w_1,w_2)\}$, $\cS_2(w_1,w_2)= \emptyset$ and set $n=2$
\item[{\sc Step 2.}] {\sc Computation.} Given an admissible simple collection $\cW_n$ with $n<\tilde{K}$ and $\#\cW_n\leq \tilde{K}$, define an admissible collection $\cW_{n+1}=\Psi(\cW_n)$ in the following way
\begin{itemize}
\item[1.] for each $W^i \in \cW_n :=(w_1^i,\ldots,w_n^i)$, consider $S^i=\psi(w_{n-1}^i,w_n^i):=\{s_1^i,\ldots,s_{J_i}^i\}$  where, $J_i\leq \tilde{K}$ depends on the pair $(w_{n-1}^i,w_n^i)$. Consider all the strings $$\big\{W^i_j= (w_1^i,\ldots,w_n^i,s^i_j): 1\leq j \leq J_i, ~1\leq i\leq \#\cW_n\big\}.$$
\item[2.] Select from these strings those such that $s^i_j=w_1$. This set of strings (possibly empty) is denoted by $\cS_{n+1}$
\item[3.] Eliminate from the remaining all the strings $W^i_j$'s that are not simple
\item[4.] From those remaining, consider any two strings, such that $s^i_j=s^{i'}_{j'}$ for some $(i,j)$ and $(i',j')$. If $\tilde\varphi(W^i_j)<\tilde\varphi(W^{i'}_{j'})$ or  $\tilde{\varphi}(W^i_j)>\tilde{\varphi}(W^{i'}_{j'})$, then eliminate the string that has the smaller value of $\tilde{\varphi}$. If $\tilde\varphi(W^i_j)=\tilde\varphi(W^{i'}_{j'})$, we eliminate (randomly) one of them. We do that for all such strings. The remaining strings must define an admissible collection of simple strings that we denote by $\cW_{n+1}$ and note that the cardinality $\# \cW_{n+1} \leq \tilde{K} $
    \end{itemize}
\item[{\sc Step 3.}] {\sc Iteration.} If $n=\tilde{K}$, STOP. Otherwise, set $n=n+1$ and GO TO Step 2.
\end{itemize}}

We then make the following elementary observation.
\begin{lem} \label{prop: algo} If the  cycle of an optimal strategy of  Proposition~\ref{prop: Weakly Coupled General} is ${W}:=(w_1,w_2,\ldots,w_P)$, for some integer $P\geq 2$,  then $W \in \cS_P(w_1,w_2)$.
\end{lem}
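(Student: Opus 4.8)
The plan is to show that the prefixes of the optimal cycle $W=(w_1,\ldots,w_P)$ survive every stage of the algorithm, so that when the return to $w_1$ is appended the full cycle is recorded in $\cS_P(w_1,w_2)$. Writing $W^{(n)}:=(w_1,\ldots,w_n)$ for the length-$n$ prefix, I would establish two structural facts about the optimizer supplied by Proposition~\ref{prop: Weakly Coupled General}: first, that each $W^{(n)}$ is a legal output of the $\psi$-extensions, so that it is actually generated; and second, that no competing string of the same length with the same endpoints can strictly beat it in $\tilde{\varphi}$, so that it is never discarded in Step 2.4. An induction on $n$ then carries $W$ through to the closing step.

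For reachability, since $W$ maximizes $\varphi$ over all cyclic policies and has distinct entries, I would use a single-vertex perturbation: for each (cyclic) index $i$, replacing $w_i$ by an arbitrary $\tilde w\in\Omega^\bTau$ produces a cyclic string of the same length $P$ whose value can only decrease. As only the two incident terms change, this gives $f(w_{i-1},w_i)+f(w_i,w_{i+1})\ge f(w_{i-1},\tilde w)+f(\tilde w,w_{i+1})$ for every $\tilde w$, i.e. $w_i$ is an optimal bridge between $w_{i-1}$ and $w_{i+1}$. Unwinding the definition of $\psi$ (the term $f(\bar w,\cdot)$ cancels inside $\varphi(\cdot,\cdot,\bar w)$), this reads exactly as $w_{i+1}\in\psi(w_{i-1},w_i)$. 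Consequently, given $W^{(n-1)}\in\cW_{n-1}$, the string $W^{(n)}=(W^{(n-1)},w_n)$ appears among the candidates of Step 2.1, it is simple as a sub-string of the all-distinct $W$ so it survives Step 2.3, and since $w_n\neq w_1$ it is not diverted into $\cS$.

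The crux is survival through the pruning in Step 2.4, where among strings sharing the same last symbol only a $\tilde{\varphi}$-maximizer is retained. Here I would use a splicing argument. Any competitor $U=(w_1,u_2,\ldots,u_{n-1},w_n)\in\cW_n$ has the same length $n$ and the same endpoints $w_1,w_n$ as $W^{(n)}$; replacing the prefix of $W$ by $U$ yields the closed walk $\widehat W=(w_1,u_2,\ldots,u_{n-1},w_n,w_{n+1},\ldots,w_P)$ of length $P$. Since the unchanged part $R=\sum_{i=n}^{P}f(w_i,w_{i+1})$ (with $w_{P+1}=w_1$) is common to both, one obtains $P\varphi(\widehat W)-P\varphi(W)=(n-1)\big(\tilde{\varphi}(U)-\tilde{\varphi}(W^{(n)})\big)$. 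Because $\widehat W$, even when not simple, is a cyclic policy in $\cP$, Proposition~\ref{prop: Weakly Coupled General} forces $\varphi(\widehat W)\le\varphi(W)$, whence $\tilde{\varphi}(U)\le\tilde{\varphi}(W^{(n)})$. Thus $W^{(n)}$ is a $\tilde{\varphi}$-maximizer among all same-endpoint competitors and is never strictly eliminated; resolving the (explicitly permitted) random ties in favor of the prefix of $W$ keeps $W^{(n)}\in\cW_n$.

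Assembling these, the induction runs from the base $\cW_2=\{(w_1,w_2)\}\ni W^{(2)}$ up to $n=P$, giving $W^{(P)}\in\cW_P$; the bridge property with $i=P$ (cyclically $w_{P+1}=w_1$) gives $w_1\in\psi(w_{P-1},w_P)$, so the return to $w_1$ is appended at the corresponding computation step and, being selected by the test $s=w_1$, the closed walk encoding $W$ lands in $\cS_P(w_1,w_2)$. I expect the delicate point to be precisely the splicing step: one must allow $\widehat W$ to be non-simple and lean on the global bound of Proposition~\ref{prop: Weakly Coupled General}, valid for every cyclic policy whether simple or not, to convert a purely local pruning decision into a comparison with the global optimum. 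Reconciling the paper's length/index bookkeeping for $\cS_P$ and treating the fixed-price case $P=1$ separately are then routine.
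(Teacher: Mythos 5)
Your proof is correct in substance, and it in fact supplies an argument the paper omits entirely: the paper states this lemma as an ``elementary observation'' and gives no proof at all. The two ingredients you isolate are exactly the right ones. The bridge property --- global optimality of $W$ over all of $\cP$ under a single-vertex perturbation gives $f(w_{i-1},w_i)+f(w_i,w_{i+1})\ge f(w_{i-1},\tilde w)+f(\tilde w,w_{i+1})$ for every $\tilde w$, which after cancelling the common wrap-around term is precisely $w_{i+1}\in\psi(w_{i-1},w_i)$ --- guarantees each prefix $W^{(n)}$ is generated in Step 2.1, and simplicity of $W$ carries it through Steps 2.2--2.3. The splicing identity $P\varphi(\widehat W)-P\varphi(W)=(n-1)\bigl(\tilde\varphi(U)-\tilde\varphi(W^{(n)})\bigr)$, combined with $\varphi(\widehat W)\le\varphi(W)$, correctly converts the local pruning comparison of Step 2.4 into a comparison with the global optimum; and you are right that the key license here is that Proposition~\ref{prop: Weakly Coupled General} bounds $\varphi$ over \emph{all} cyclic policies, so $\widehat W$ need not be simple. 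The length bookkeeping you defer (the closed string $(w_1,\ldots,w_P,w_1)$ has length $P+1$, so under the algorithm's indexing it lands in $\cS_{P+1}(w_1,w_2)$ rather than $\cS_P(w_1,w_2)$) is indeed an imprecision in the paper's statement, not in your argument.

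The one point that deserves a sharper flag is the tie-breaking. Step 2.4 eliminates exact ties \emph{randomly}, so you do not get to ``resolve the ties in favor of the prefix of $W$'': if some simple competitor $U$ with $\tilde\varphi(U)=\tilde\varphi(W^{(n)})$ is retained instead of $W^{(n)}$, the specific cycle $W$ is irrevocably lost from this run, and the retained string extends through $\psi(u_{n-1},w_n)$, which need not contain $w_{n+1}$. So the lemma as literally stated (every optimal cycle with first two entries $w_1,w_2$ ends up in $\cS(w_1,w_2)$) can fail in degenerate instances, and even the weaker claim the paper actually needs downstream --- that $\cS$ contains \emph{some} policy of optimal value --- requires an extra argument in the presence of ties, which neither you nor the paper provides. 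This is best regarded as a defect of the paper's formulation rather than a gap in your reasoning: your proof establishes the lemma whenever the encountered $\tilde\varphi$-maximizers are unique (the generic case), which is the honest content of the ``elementary observation.''
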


Given that each computation step generates simple strings, and by noticing that the first part of Step 2 can be done offline, we need at most $\cO(\tilde{K}^2)$ computations to obtain $\cW_{n}(w_1,w_2)$ and $\cS_{n}(w_1,w_2)$, $n\leq \tilde{K}$. This algorithm takes as an input, $w_1$ and $w_2$ and hence, needs to be repeated for all possible values of $w_1$ and $w_2$. As a consequence of Lemma \ref{prop: algo}, if we consider
$$\cS:=\bigcup_{(w_1,w_2), w_1\neq w_2} \bigcup_{n\leq \tilde{K}} \cS_n(w_1,w_2)$$
and the set of single value strategies $$\cS_0:=\bigcup_{w \in \Omega^\bTau} \{(w)\}$$
then an optimal strategy $W$ of  Proposition \ref{prop: Weakly Coupled General}  satisfies $W \in \cS$. Note that the computation of $\cS$ needs at most $\cO(\tilde{K}^4)$ calculations of the type $\varphi(w,w',w'')$.  We have thus proved the following.

\begin{prop} \label{propo: algo} If a revenue function  is $\bTau$-weakly coupled, then an optimal cyclic policy of Proposition \ref{prop: Weakly Coupled General} can be obtained with  $\cO({K}^{4\bTau})$ elementary calculations \footnote{Observe that in step 2, when finding all admissible  strings that start with $(\omega_1,\omega_2)$ we get actually the information on how other couples such as $(\omega_2,\omega_3^1)$ can be completed in admissible strings. If one takes into account this information, the algorithm here presented can be slightly altered to give a complexity of the order of $\tilde{K}^3$ which corresponds to the minimal complexity known for equivalent problems such as minimum mean cycles on graphs.}.
\end{prop}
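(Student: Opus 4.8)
The plan is to separate the statement into a correctness part and a counting part, and to treat the counting part as the actual content since the correctness is already delivered by the two preceding results. For correctness, I would invoke Proposition~\ref{prop: Weakly Coupled General}: there is an optimal cyclic policy $\pi^*=(w_1,\ldots,w_P)$ with distinct $w_i\in\Omega^\bTau$ and $P\leq\tilde K=K^\bTau$. If $P=1$ then $\pi^*\in\cS_0$; if $P\geq 2$, Lemma~\ref{prop: algo} gives $\pi^*\in\cS_P(w_1,w_2)\subseteq\cS$. Hence in every case an optimal cycle lies in the finite set $\cS\cup\cS_0$ produced by the algorithm, so evaluating $\varphi$ on each of its elements and returning a maximizer recovers the optimum. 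What remains is to bound the number of elementary $\varphi$-evaluations needed to build $\cS\cup\cS_0$.

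The quantitative heart of the proof is the invariant that the admissible collections never grow beyond $\tilde K$: for every input pair $(w_1,w_2)$ and every $n$, $\#\cW_n(w_1,w_2)\leq\tilde K$. I would prove this by induction on $n$, the inductive step being exactly the pruning imposed in Step~2.4: two distinct surviving strings of a common length are forbidden to share the same terminal block $w_n$, and since there are at most $\tilde K$ possible terminal blocks in $\Omega^\bTau$, the cardinality bound follows. This is the step that collapses what is \emph{a priori} a doubly-exponential number of simple strings (the $K^{K^M}$ count recorded after Theorem~\ref{thm: Main}) down to a polynomial-size search: keeping, among all partial strings with a common endpoint, only the one of largest $\tilde\varphi$ is precisely the dynamic-programming reduction that Lemma~\ref{prop: algo} certifies to be lossless.

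With the invariant in hand the count is routine. The map $\psi$ depends only on pairs in $\Omega^\bTau\times\Omega^\bTau$, not on the input $(w_1,w_2)$, so it is tabulated once, offline; for each of the $\tilde K^2$ pairs $(w,w')$ one compares $\varphi(w,w',\bar w)$ against $\max_{\tilde w}\varphi(w,\tilde w,\bar w)$ over the $\bar w\in\Omega^\bTau$. Once $\psi$ is available, for a fixed input pair the iteration $\cW_n\mapsto\cW_{n+1}=\Psi(\cW_n)$ reduces to a best-path computation on a graph whose at most $\tilde K$ states are the admissible endpoints and whose transitions come from $\psi$, the $\tilde\varphi$-values being maintained incrementally at the cost of one $f$-evaluation per extension; this costs $\cO(\tilde K^2)$ calculations of type $\varphi(w,w',w'')$ to generate all of $\cW_n(w_1,w_2)$ and $\cS_n(w_1,w_2)$ for $n\leq\tilde K$. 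Repeating over the $\cO(\tilde K^2)$ admissible input pairs $(w_1,w_2)$ with $w_1\neq w_2$ then yields a total of $\cO(\tilde K^4)=\cO(K^{4\bTau})$ elementary calculations, which also absorbs the offline cost of $\psi$ (and which, as the footnote indicates, can be shaved to $\cO(\tilde K^3)$ by reusing the completion information across pairs).

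I expect the only genuinely delicate point to be the justification of the pruning in Step~2.4, namely that discarding every equal-endpoint string except the one of maximal $\tilde\varphi$ can never delete a prefix that an optimal cycle requires; this is exactly where Lemma~\ref{prop: algo} does the work. Granting that lemma, the residual obstacles are the verification of the cardinality invariant $\#\cW_n\leq\tilde K$ and the bookkeeping that keeps each extension to $\cO(1)$ $f$-evaluations through incremental maintenance of $\tilde\varphi$; both are mechanical, and together they close the bound.
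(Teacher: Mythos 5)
Your proposal is correct and follows essentially the same route as the paper: correctness is delegated to Proposition~\ref{prop: Weakly Coupled General} and Lemma~\ref{prop: algo} (an optimal cycle of length $\geq 2$ must lie in $\cS$, and fixed-price cycles in $\cS_0$), and the count comes from the admissibility bound $\#\cW_n\leq\tilde K$, the offline tabulation of $\psi$, the $\cO(\tilde K^2)$ cost per input pair $(w_1,w_2)$, and the $\cO(\tilde K^2)$ choices of that pair, giving $\cO(\tilde K^4)=\cO(K^{4\bTau})$. Your added detail (induction for the cardinality invariant, incremental maintenance of $\tilde\varphi$) merely makes explicit what the paper leaves implicit, so the two proofs coincide in substance.
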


As for the constrained optimization, the immediate corollary of Proposition \ref{propo: algo} is the following.

\begin{cor} \label{cor: algo} If a revenue function  is $\bTau$-weakly coupled, then any optimal $M$-simple cyclic $\sigma$-policy of Proposition \ref{cor: Weakly Coupled} can be obtained with  $\cO({K}^{4M})$ elementary calculations.
\end{cor}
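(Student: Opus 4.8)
The plan is to reduce the constrained optimization on $\cA_\sigma$ to an unconstrained weakly coupled optimization over a \emph{smaller} alphabet, so that Proposition~\ref{propo: algo} can be invoked essentially verbatim. Recall that the weakly coupled representation writes any policy as a sequence of $\tau$-blocks $w_i\in\Omega^{\tau}$, where $w_i$ carries the prices on the window $[(i-1)\tau+1,i\tau]$. Since we have normalized $\tau=M\sigma$, each such window consists of exactly $M$ consecutive aligned $\sigma$-phases. First I would observe that any $\sigma$-policy $\pi\in\cA_\sigma$, written in this block form, has each $\tau$-block $w_i$ constant on each of its $M$ constituent $\sigma$-phases. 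Hence $w_i$ does not range over all of $\Omega^{\tau}$ but only over
$$\widehat{\Omega}:=\{\,w\in\Omega^{\tau}:\ w\text{ is constant on each of its }M\text{ sub-blocks of length }\sigma\,\},$$
and such a block is fixed by choosing one price per $\sigma$-phase, so $\#\widehat{\Omega}=K^{M}$. Conversely every sequence valued in $\widehat{\Omega}$ is a genuine $\sigma$-policy, so $\cA_\sigma$ is in bijection with such sequences, and the restriction of $\varphi$ to $\cA_\sigma$ is exactly the weakly coupled functional $\varphi(\pi)=\limsup_{n}\frac1n\sum_{i}f(w_i,w_{i+1})$ with symbols $w_i\in\widehat{\Omega}$.

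Next I would run the Algorithm of Section~\ref{sec: Algo} with the alphabet $\Omega^{\tau}$ replaced throughout by $\widehat{\Omega}$. The proofs of Proposition~\ref{prop: Weakly Coupled General} and Proposition~\ref{propo: algo} use only that the symbols range over a finite set coupled pairwise by $f$; no other property of $\Omega^{\tau}$ intervenes. They therefore apply unchanged over $\widehat{\Omega}$: Proposition~\ref{prop: Weakly Coupled General} produces an optimal cyclic policy whose $\tau$-blocks are pairwise distinct, of length at most $\#\widehat{\Omega}=K^{M}$, and the algorithm detects it. Setting $\tilde K:=\#\widehat{\Omega}=K^{M}$ in the complexity count of Proposition~\ref{propo: algo}, the number of elementary calculations is $\cO(\tilde K^{4})=\cO(K^{4M})$, which is the claimed bound.

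The only delicate point --- and the step I expect to be the main, though minor, obstacle --- is to reconcile the ``simple over $\widehat{\Omega}$'' structure produced by the algorithm with the $M$-simplicity of Proposition~\ref{cor: Weakly Coupled}: pairwise distinctness of the \emph{aligned} $\tau$-blocks only forbids repetition of the windows anchored at block boundaries, whereas $M$-simplicity forbids repetition of \emph{any} window of $M$ consecutive $\sigma$-phases at arbitrary offset. This discrepancy is harmless for the complexity statement, since the algorithm already returns an optimizer attaining $\sup_{\pi\in\cA_\sigma}\varphi(\pi)$, while Proposition~\ref{cor: Weakly Coupled} independently guarantees that an $M$-simple optimizer of the \emph{same} value exists. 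I would close by noting that the computed optimizer and the $M$-simple optimizer coincide in value, so an optimal $M$-simple cyclic $\sigma$-policy is obtained within $\cO(K^{4M})$ elementary calculations.
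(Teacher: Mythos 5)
Your proof is correct and is essentially the paper's own argument: the paper obtains Corollary~\ref{cor: algo} as an ``immediate corollary'' of Proposition~\ref{propo: algo} precisely by restricting the weakly coupled optimization to the sub-alphabet of $\tau$-blocks realizable by $\sigma$-policies, which has cardinality $K^M$, so the algorithm's bound $\cO(\tilde{K}^4)$ becomes $\cO(K^{4M})$. The aligned-block versus arbitrary-offset subtlety you flag is glossed over in the paper as well, and your resolution (the computed optimizer attains the same value as the $M$-simple optimizer guaranteed by Proposition~\ref{cor: Weakly Coupled}) is adequate for the complexity claim.
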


Given Proposition~\ref{propweakly}, the previous corollary completes the proof of Theorem \ref{thm: Algo}.

\section{Markovian Models of Time-Varying Valuations}\label{sec: Markovian_Model}

In this section, we revisit the model introduced in Section~\ref{sec: simple}, where we specifically restrict ourselves to the class of (BP) valuation processes that follow a Markov chain. This special and important class allows us to achieve a number of things. First, we obtain an explicit expression for the revenue function. This is particularly helpful in the numerical study of Section~\ref{sec: Numerics}. Besides retaining the weakly coupled property, the revenue function  is shown to be affine in the duration of a phase and in the expected number of customers available initially. As a result, we obtain some refinements of the main results of Section~\ref{sec: Main results}. %While, section~\ref{sec: Main results} results considered policies in $\cA_\sigma$, i.e., policies where the duration of each phase is multiple of $\sigma$, we generalize these to the set $\cP_\sigma$ of policies where the duration of each phase is larger than $\sigma$.
Finally, this Markovian setting allows us also to extend our analysis to the case where  the patience levels are bounded only in expectation   (see, Section~\ref{sec: extensions}).

\subsection{Model Ingredients}

Similar to Section~\ref{sec: simple}, and to simplify the presentation, we consider a constant number of customers arriving every period. Without loss of generality, we assume that this number is one. We restrict our analysis to a single class of customers since the revenue function of a multiple class model is a linear combination of revenue functions of single class models, and because the additional properties that we want to show for the revenue function of the Markovian model are stable under linear combination.%{\blue We also discuss below why the single class case is not restrictive and can be easily generalized to the case of    multi-class customers. }

We recall that every arrival is endowed with an initial valuation $\omega \in \Omega\equiv\{v_{j}:1\leq j\leq K\}$ with a distribution given by a probability vector $\boldsymbol\gamma=(\gamma_1,...,\gamma_K)$.
%$\cV_t(s)\in \Omega$%
%WLOG we can set $N\equiv 1$.

We also assume that the valuation process of the customer arriving at time $t$, $(\cV_t(s):s\geq 0)$ (where we dropped the superscripts $i$ and $k$), follows a Markov chain on $\Omega$, determined  by his arrival valuation and by a transition matrix that we denote by $\bar{Q}$.
% the transition matrix of this Markov chain %Without loss of generality, we assume that the Markov chain takes values on $\Omega:=\{v_{j}:0\leq j\leq K\}$.
%and by $v_0$ the absorbing state $0$.
Each entry $q_{ij}$ of $\bar Q$, is the probability that a customer with current valuation $v_i$ sees his valuation change to $v_j$ in the next period. Recall that $v_0:=0$, is an absorbing state that once reached indicates that the client lost interest in purchasing. Hence, we take the first line of $\bar Q$ to be zero except for its first entry that is equal to $1$.  From now on, we refer to the transition matrix by its minor $Q$ obtained by removing from $\bar Q$ the first line and the first column.
% the customer leaves the system either if he purchases the item or without purchasing once the valuation reaches the value $v_0$, whichever occurs first. Recall that $\cV_{t}(\tau)=v_0$.
This model allows customers to have different life spans, but  we recall that there exists a maximum patience level $\bbtau$ so that $\cV_t(s)=v_0=0$ for every $s\geq \bbtau$. % that is: if $\tau$ periods after the period of arrival, the customer did not buy or reach $v_0$ yet, then we stop the Markovian evolution .
\vspace{0.2cm}

In this Markovian setting, besides $\bbtau$, the primitives of the problem are summarized by $(\boldsymbol\gamma,Q)$. With some abuse of notations, the pair $(\boldsymbol\gamma,Q)$ will also be used to represent the process $\cV_t$. %governed in this setting by $(\boldsymbol\gamma,Q)$.

When $\bbtau<\infty$, we call our Markovian model (M-BP), for {\it Markovian, Bounded Patience}. The (M-BP) model satisfies  the assumptions of the general setting discussed in Section~\ref{sec: Model_Main}, that is: it is a (BP) model. Consequently, its revenue function is weakly coupled and all of the results obtained for (BP) models hold for these Markovian models.

In contrast,  we refer to the case  where $\bbtau=\infty$, as the  (M-UP) model for {\it Markovian, Unbounded Patience}. Section~\ref{sec: extensions} covers how the analysis of the (M-BP) yields approximate solutions to the  (M-UP) case.

\subsection{Revenue  Pairs} \label{sec: revenue function  pair}
%{\color{blue}AE questioning the use of $\tauu$.} {\color{green}I suggest we use another letter. We could use $u$}

 We start this section by defining a revenue pair that will be helpful in calculating the revenue function associated to the (M-BP) model.  %Assume that the primitives of the model $\bbtau$ and $(\boldsymbol\gamma,Q)$ are given. %Everywhere in the sequel, the stationarity of the arrival process $(\xi^i_t,\cV^i_t)$ is crucially used (for instance in Definitions \ref{defTheta} \ref{defL} and Propositions \ref{Eq: Main Objective} and \ref{propweakly}).

We say that a vector $\boldsymbol\theta$ denotes the expected number of customers in the system at a given time, when the coordinates of $\boldsymbol\theta$,   $\theta_m$, $1\leq m\leq K$ are equal to the expected number of customers in the system, at this time, with valuation $v_m$. Note that the number of customers in the system at any point in time is bounded by $\bbtau$.

\iffalse
Consider a vector $\boldsymbol\theta$ of clients present in the system preceding some phase $(k,\tau)$ i.e.,. Considering also the new arrivals that will occur sequentially during the phase $(k,\tau)$, as well as the changes through time in the valuations of all the customers in the system during this phase, we are interested in the vector  $\boldsymbol\theta'$  depicting the expected number of the remaining customers in the system at the end of the phase $(k,\tau)$.
We denote by $L$ the total \textit{expected revenues} generated during this same phase. %The next two definitions take full advantage of the stationarity of $(\xi^i_t,\cV^i_t)_{i\in\cI}$ and of the fact that  for every $i$ and every $t$, the process $\cV^i_t(s)$ is Markovian.
We formally define these quantities.
\fi

Let   $\mathcal{M}= \Omega \times\mathbb{N}\times[0,\bbtau]^K$. For the rest, we denote by $t$ the arrival time of a customer, and  by $\tauu$ the duration of a generic phase of a pricing policy.

  \begin{dfn}[Revenue pair] \label{defTheta}  Let $\boldsymbol\theta$ be a vector determining the expected number of customers in the system at the beginning of a phase $(k,\tauu)$, we define
  \begin{align*} \boldsymbol\Theta&:  \hspace{0.85cm}   \mathcal M \rightarrow[0,\bbtau]^K\\
 & \quad (k,\tauu,\boldsymbol\theta) \mapsto \boldsymbol\theta'=\boldsymbol\Theta\big(k,\tauu|\,\boldsymbol\theta \big),\end{align*}
where $\boldsymbol\theta'$ is the vector  measuring the {expected} number of customers in the system at the end of the phase $(k,\tauu)$.

Let
  \begin{align*} L&:  \hspace{0.85cm}   \mathcal M \rightarrow\R_+\\
 & \quad (k,\tauu,\boldsymbol\theta) \mapsto L\big(k,\tauu|\,\boldsymbol\theta \big),\end{align*}
where $L\big(k,\tauu|\,\boldsymbol\theta \big)$ is the total expected revenues generated during phase $(k,\tauu)$.% with $\boldsymbol\theta$  the vector which $K$ entries are given by the number of clients in the system at the beginning of the phase.
 \end{dfn}

 It is worth stressing that $\boldsymbol\theta'$ accounts (in expected value) for customers
 included in $\boldsymbol\theta$ that did not exit during the phase $(k,\tauu)$, and those that arrived during the phase $(k,\tauu)$ and did not yet exit.  Similarly, $L\big(k,\tauu|\,\boldsymbol\theta \big)$ are the expected revenues generated from customers included in $\boldsymbol\theta$ or from those that arrived to the system during the phase $(k,\tauu)$. Regardless, these revenues were generated during time $\tauu$.

Considering that the {expected} number of customers in the system, with valuation $v_m$, at the end of the phase $(k,\tauu)$, depends only on $(k,\tauu)$ and  $\boldsymbol\theta$, is a consequence of : $i.)$ the Markovian feature of the valuation process that guarantees that the evolution of the customers included in $\boldsymbol\theta$ does not depend on the past, and $ii.)$ the stationarity of the processes $(\boldsymbol\gamma,Q)$, (i.e., being independent of $t$). For these reasons we can also define the expected revenue as a function of $(k,\tauu)$ and  $\boldsymbol\theta$.

We now express the revenue function {\it via} the pair $(\boldsymbol\Theta,L)$. Since we are interested in the optimization of the  long-run average revenue, we may assume without loss of generality that the system starts empty at $t=0$.

%\noindent We readily have the following consequence of our definitions.

\begin{prop}\label{Eq: Main Objective} Let $\pi=\big((k_j,\tauu_j):j\geq 1\big)$. The revenue function is given by
\begin{align*}\mathcal{R}(\pi)&=\limsup_{n\rightarrow\infty}\frac{1}{T_n}\,\sum_{j=1}^n L(k_j,\tauu_j|\,\boldsymbol\theta^{j-1}),\\
 \boldsymbol{\theta}^{j}&=\boldsymbol\Theta\big(k_j,\tauu_j|\,\boldsymbol{\theta}^{j-1}\big).\end{align*}
with $T_n=\sum_{i=1}^n{\tauu_i}$.\vspace{0.2cm}
\end{prop}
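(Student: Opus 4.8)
The plan is to begin from the defining expression $\mathcal R(\pi)=\limsup_{t\to\infty}\frac1t L_t(p_1,\ldots,p_t)$ and to regroup the cumulative expected revenue $L_t$ phase by phase. Write $\boldsymbol\theta^0=0$ since the system starts empty, and for $j\ge 1$ let $\widehat L_j$ denote the (random) revenue collected during phase $(k_j,\tauu_j)$ and $\Xi^{j}$ the (random) valuation-count vector of the customers still present at the end of that phase. Because every unit of revenue is earned in a single period, and thus within a single phase, revenue is additive over phases and one gets the exact identity $L_{T_n}=\sum_{j=1}^n \E[\widehat L_j]$. Since at most $\tau$ customers can be present at once (one arrival per period, each present for at most $\tau$ periods), the per-period revenue is uniformly bounded by $\tau v_K$; together with the fact that within a constant-price phase revenue accrues at an asymptotically constant rate, this lets me pass between $\limsup_t L_t/t$ and the phase-indexed $\limsup_n L_{T_n}/T_n$, reducing the statement to identifying $\E[\widehat L_j]$ and $\E[\Xi^j]$.

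The core is then an induction on $j$ establishing simultaneously that $\E[\Xi^{j}]=\boldsymbol\theta^{j}$ with $\boldsymbol\theta^{j}=\boldsymbol\Theta(k_j,\tauu_j\mid\boldsymbol\theta^{j-1})$, and that $\E[\widehat L_j]=L(k_j,\tauu_j\mid\boldsymbol\theta^{j-1})$. The base case $j=1$ holds by the empty-start convention. For the inductive step I would condition on the whole history up to the start of phase $j$, of which the relevant datum is the random configuration $\Xi^{j-1}$ of customers present. The argument rests on the three ingredients already isolated before Definition~\ref{defTheta}. First, \emph{linearity}: customers evolve and decide to purchase independently of one another, and both revenue and survival are additive over customers, so the expected revenue during the phase and the expected end-of-phase configuration are \emph{affine} functionals of the initial configuration; consequently they depend on $\Xi^{j-1}$ only through its mean $\E[\Xi^{j-1}]=\boldsymbol\theta^{j-1}$, and the random initial state may be replaced by its expectation. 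Second, the \emph{Markov property} of each valuation process: a customer currently at $v_m$ evolves over the phase in a way that depends only on $v_m$ (and the constant price $k_j$), not on how he reached $v_m$, so the valuation-count vector is a sufficient statistic for the carried-over customers. Third, \emph{stationarity} of $(\boldsymbol\gamma,Q)$: the law of the customers arriving during the phase, and of their subsequent dynamics, depends only on the duration $\tauu_j$ and price $k_j$ and not on the absolute time index, so the two maps are the time-homogeneous $L(k_j,\tauu_j\mid\cdot)$ and $\boldsymbol\Theta(k_j,\tauu_j\mid\cdot)$ of Definition~\ref{defTheta}. Combining the three yields $\E[\widehat L_j]=L(k_j,\tauu_j\mid\boldsymbol\theta^{j-1})$ and $\E[\Xi^{j}]=\boldsymbol\Theta(k_j,\tauu_j\mid\boldsymbol\theta^{j-1})=\boldsymbol\theta^{j}$, closing the induction; substituting into $L_{T_n}=\sum_{j=1}^n\E[\widehat L_j]$ and dividing by $T_n$ gives the stated formula.

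I expect the main obstacle to be the linearity step, namely the justification that the random population present at the start of a phase may be replaced by its expected counts. What makes this legitimate is that, conditionally on the valuation of each customer present, the customers' future trajectories and purchase decisions are independent and their revenue contributions add, so the conditional expected revenue is a linear function of the indicator counts and its expectation therefore sees only $\boldsymbol\theta^{j-1}$; care is needed to check that carried-over customers and newly arriving customers do not interact (they are independent and merely pooled), and that each customer contributes to revenue at most once before exiting. A secondary, more routine point is the passage from the time average $L_t/t$ to the phase-boundary average along $\{T_n\}$, which requires the uniform per-period revenue bound and, for a policy ending in a fixed-price tail, a direct verification that $L_t/t$ converges to the steady per-period revenue of that price, consistent with the formula when that tail is written as infinitely many unit-length phases.
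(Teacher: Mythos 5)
Your proof is correct and follows essentially the same route as the paper's: decompose the revenue phase by phase, track the realized customer-count vector $\hat{\boldsymbol\theta}^{j}$, and use stationarity together with the Markov property to identify, by induction over phases, the expected phase revenue and expected end-of-phase state with $L(k_j,\tauu_j\mid\boldsymbol\theta^{j-1})$ and $\boldsymbol\Theta(k_j,\tauu_j\mid\boldsymbol\theta^{j-1})$. The paper's own proof is terser, asserting these identifications directly by ``considering all the possible realizations,'' whereas you make explicit the two points it glosses over — the linearity argument that justifies replacing the random initial configuration by its mean, and the passage from the time average $L_t/t$ to the phase-boundary average along $\{T_n\}$ — both of which are genuine ingredients and are handled correctly in your write-up.
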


\begin{proof} Fix a pricing policy $\pi=\big((k_j,\tauu_j):j\geq 1\big)$. For a realization through time of the arrival and the transition processes $(\cV_t(s):s\geq 0)$, we define $\hat{\boldsymbol{\theta}}^{j}$ as the realized number of customers  in the system at time $T_j$. As in  Definition \ref{defTheta} we have that the $\hat{\boldsymbol{\theta}}^{j}$ and the realized revenue during the $j^{th}$ phase are functions of $\hat{\boldsymbol{\theta}}^{j-1}$, as well as the realization of the arrival and the transition processes of customers during this phase. Considering all the possible realizations of $(\boldsymbol\gamma,Q)$ for $0\leq t\leq T_j$, we find that the expected value of $\hat{\boldsymbol{\theta}}^{j}$ is exactly the sequence $\boldsymbol{\theta}^{j}=\boldsymbol\Theta\big(k_j,\tauu_j|\,\boldsymbol{\theta}^{j-1}\big)$ and that the expected value of the revenue generated during the $j^{th}$ phase is $L(k_j,\tauu_j|\,\boldsymbol\theta^{j-1})$, from which Proposition \ref{Eq: Main Objective} immediately follows.  $\square$
\end{proof}

\subsection{The (M-BP) Model. Main results}

%Before we state our results, we introduce, under (UPL) and for any given $\varepsilon>0$, the quantity $$\bar{\tau}_\varepsilon=\left|\ln\varepsilon/\ln (1-\nu)\right|.$$ It is easy to show that $\tau_\varepsilon$ is an $\varepsilon$-bound on the patience level and hence is for the (UPL) case $\tau<\infty$ in for the (BPL) case.

%One contribution of this section involves a characterization of an $\varepsilon$-optimal revenue function  and a solution of the optimization problem under an unbounded patience level. The results also hold under (BPL) in an exact sense (i.e., $\varepsilon=0)$. We will clearly state the result for both cases, however, for the sake of the analysis and the discussions that precede or follow the results, and when it is clear from the context, we will not differentiate between the (UPL) and (BPL) case: in particular, a statement that contains $\tau$ would apply as is in the (BPL) case but would require $\tau$ to be replaced by $\tau_\varepsilon$ for it to apply to the (UPL) case. Similarly, when the terms ``bounded", $\tau$-affine or ``optimal" are used they should be replaced by $\varepsilon$-bounded, ($\varepsilon,\tau$)-affine and $\varepsilon$-optimal in order for the same statement to hold in the (UPL) case.

Our first result shows that under the (M-BP) model  the revenue function satisfies the so-called $\tau$-affine property. We use the notation $\langle \cdot,\cdot\rangle$ to denote the scalar product of two vectors.

\begin{prop}\label{prop:Eps_Affine}{\bf (Affine property)}
Under (M-BP) and given  $(\boldsymbol\gamma,Q,\tau)$, the following holds. For any price $1\leq k \leq K$, there exist $ \bar{\boldsymbol\Theta}_k\in [0,\tau]^K$, $A_k, B_k \in \R^+$, %$\bar{B'}_{k} \in \R^K$
and $\bold{C}_k\in \R^K$ such that
for any $\tauu \geq \tau$, and for any ${\boldsymbol\theta}\in [0,\tau]^K$, we have that
\begin{itemize}
\item[(A1)]  $\boldsymbol\Theta\big(k,\tauu|\,\boldsymbol{\theta}\big)= \bar{\boldsymbol\Theta}_k,$
\item[(A2)]   $  L(k,\tauu|\,{\boldsymbol\theta})= A_{k}+B_{k}(\tauu-{\tau}) +\langle { \bold{C}_k,\boldsymbol\theta}\rangle.$
\end{itemize}

In this case, the pair  $(\boldsymbol\Theta,L)$ is said to be ${\tau}$-affine. The parameters $A_{k}, B_{k},\bold{C}_{k}$ and $\bar{\boldsymbol\Theta}_k$ can be given in closed form.
\end{prop}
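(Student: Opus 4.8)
The plan is to compute the revenue pair $(\boldsymbol\Theta,L)$ for a single constant-price phase by conditioning on the \emph{age} (time elapsed since arrival) of each customer and then exploiting the bounded patience. Fix the price index $k$. Since a patient customer purchases exactly when its valuation first exceeds $v_k$ and is lost when it reaches the absorbing state $0$, the customers that remain \emph{active} under price $v_k$ are precisely those whose valuation trajectory stays in $\{v_1,\dots,v_k\}$; their motion is governed by the sub-stochastic matrix $Q^{(k)}$, the top-left $k\times k$ block of $Q$, while the law of the active arrivals is the restricted (defective) vector $\tilde{\boldsymbol\gamma}^{(k)}=(\gamma_1,\dots,\gamma_k,0,\dots,0)$.

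First I would prove (A1). Any customer counted in $\boldsymbol\theta$ is present when the phase begins, so after the $\tauu\ge\tau$ periods of the phase it has spent at least $\tau$ periods in the system; by the patience bound its valuation has been forced to $0$ and it has left. Hence none of the $\boldsymbol\theta$-customers survive to the end of the phase, which already removes every dependence of $\boldsymbol\theta'$ on $\boldsymbol\theta$. The survivors are exactly the arrivals of the last $\tau$ periods that have stayed active: an arrival of age $a$ contributes the defective valuation law $\tilde{\boldsymbol\gamma}^{(k)}(Q^{(k)})^a$, and because $\tauu\ge\tau$ each age $a=0,\dots,\tau-1$ is realized exactly once. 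Summing gives the closed form $\bar{\boldsymbol\Theta}_k=\tilde{\boldsymbol\gamma}^{(k)}\sum_{a=0}^{\tau-1}(Q^{(k)})^a$, which depends neither on $\tauu$ nor on $\boldsymbol\theta$, proving (A1).

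For (A2) I would split the phase revenue, using additivity of expected revenue over customers together with Proposition~\ref{Eq: Main Objective}, into a contribution from the initial population $\boldsymbol\theta$ and one from the new arrivals. By the Markov property and stationarity, the expected within-phase revenue produced by one initial customer is a function of its \emph{current} valuation $v_m$ alone: it equals $v_k$ if $m>k$, and $v_k$ times the probability of first exceeding $v_k$ before absorption if $m\le k$. Calling this quantity $(\mathbf{C}_k)_m$ and summing over the $\theta_m$ customers yields the linear term $\langle \mathbf{C}_k,\boldsymbol\theta\rangle$. For the new arrivals, one with $g$ periods left before the phase ends contributes $v_k$ times the probability of purchasing within $\min(g,\tau-1)$ steps; this equals the full per-customer value $\rho_k:=v_k\,\Pro(\text{purchase within patience})$ whenever $g\ge\tau-1$, and a truncated value otherwise. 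Exactly $\tauu-\tau+1$ arrivals satisfy $g\ge\tau-1$, while the last $\tau-1$ arrivals contribute a total independent of $\tauu$; hence the new-arrival revenue is affine in $\tauu$ with slope $B_k:=\rho_k$, and the constant part (together with the truncated tail) is folded into $A_k$. Collecting terms gives (A2) with all four parameters in closed form.

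The main obstacle is the per-customer linearity underlying $\langle \mathbf{C}_k,\boldsymbol\theta\rangle$: one must argue that the revenue extracted from a customer already in the system is determined by its current valuation alone, so that the aggregate depends on $\boldsymbol\theta$ only through the marginal counts $\theta_m$ and through a single fixed vector $\mathbf{C}_k$. This is precisely where the Markov property and stationarity of $(\boldsymbol\gamma,Q)$ are essential — they make each customer's future trajectory, and hence its purchase probability and survival, independent of the past and of the identity of the phase that produced $\boldsymbol\theta$ — while bounded patience is what confines the entire transient to the first $\tau$ periods, thereby forcing the $\tauu$-dependence to be genuinely affine (no higher-order terms) for every $\tauu\ge\tau$. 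The remaining work is the routine bookkeeping that writes $A_k,B_k,\mathbf{C}_k,\bar{\boldsymbol\Theta}_k$ explicitly in terms of $\tilde{\boldsymbol\gamma}^{(k)}$ and $Q^{(k)}$, once the decomposition above is in place.
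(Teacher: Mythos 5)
Your proposal is correct and follows essentially the same route as the paper's Appendix~A proof: (A1) from the patience bound eliminating the initial population and leaving only the last $\tau$ arrival cohorts with fixed defective laws, and (A2) from the three-way split into initial customers (giving $\langle \mathbf{C}_k,\boldsymbol\theta\rangle$), early arrivals each contributing the full per-customer value (giving the slope $B_k$), and late arrivals contributing a $\tauu$-independent constant ($A_k$). The only discrepancies are harmless indexing conventions (the paper treats a customer as purchasing when its valuation is $\geq v_k$, so the active block is $(k-1)\times(k-1)$ rather than $k\times k$, and it counts $\tauu-\tau$ rather than $\tauu-\tau+1$ full-value arrivals), which are absorbed into the constants.
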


The proof of the affine property is postponed to Appendix A.

This previous property not only allows one to obtain an explicit expression of the revenue function but also leads to refinements of the main results of Section~\ref{sec: Main results}.

While $\cA_\sigma$ is the set of policies where each phase is a multiple of $\sigma$, we introduce $\cP^+_\sigma$ as the set of policies where each phase has a duration larger than $\sigma$. Clearly, $\cA_\sigma\subseteq\cP^+_\sigma$.

%One advantage of the previous result is that it decouples the duration that a price is set for, from the value of the price itself. It also formulates the value function as a function of a bound on the patience level. The next proposition is obtained as a result of this formulation and confirms the fact that in the unconstrained case (i.e., $\sigma =1$) no price should be set continuously for a time longer than $\tau$ unless the optimal policy is a fixed-price policy.

%As a consequence of affinness we get that an optimizing strategy does not have any phase longer than $\tau$ unless it is a  fixed-price policy. This property reduces greatly the search for an optimal policy on the \textit{unconstrained} set $\cP$, although the search set remains substantially large.

%To state the result precisely, it is convenient to introduce the following subspace of $\cP$   $$\mathcal{P}^-_{\tau}:=\{ \pi =((k_j,\tau_j):j\geq 1),  k_j\neq k_{j+1}, \tau_j\leq \tau, \forall j\geq 1\}.$$

\begin{prop}\label{prop: Affine_Reduction} Under (M-BP):
\begin{itemize}
\item[i.)] One can find an optimizing strategy of (\ref{Eq: General_Main_Form}) that is either a fixed-price strategy or a strategy in which no price is set for more than $\tau$ periods.
\item[ii.)]If $\sigma\geq \tau$, there exists a simple cyclic
  $\sigma$-policy that optimizes $\cR$ on  $\mathcal{P}^{+}_{\sigma}$. Moreover, there exists a threshold $\sigma_0$ such that, if $\sigma\geq\sigma_0$, then a fixed-price policy is optimal on $\cP^+_{\sigma}$.
\end{itemize}

%\[\sup_{\pi\in\cP}\cR(\pi)= \sup_{\pi\in\mathcal{P}^{-}_{\bar{\tau}}}\mathcal{R}(\pi).\]
\end{prop}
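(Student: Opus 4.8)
The plan is to lean entirely on the affine property of Proposition~\ref{prop:Eps_Affine}. For a phase $(k,\tauu)$ with $\tauu\ge\tau$, (A1) says the exit state $\boldsymbol\Theta(k,\tauu\mid\boldsymbol\theta)=\bar{\boldsymbol\Theta}_k$ is independent of both $\tauu$ and the entry state $\boldsymbol\theta$, while (A2) says the phase revenue $L(k,\tauu\mid\boldsymbol\theta)=A_k+B_k(\tauu-\tau)+\langle \mathbf C_k,\boldsymbol\theta\rangle$ is affine in the duration with slope $B_k$. Two consequences drive everything. First, along any cyclic policy all of whose phases have length at least $\tau$, each phase's exit state depends only on its price, so the entire state sequence is pinned down by the price sequence and each revenue term depends only on the pair of consecutive prices and on that phase's own duration. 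Second, the long-run rate of the fixed-price policy at $k$ is exactly $\lim_{\tauu\to\infty}L(k,\tauu\mid\cdot)/\tauu=B_k$; writing $B^{\ast}=\max_k B_k$ for the best fixed-price rate, and noting that fixed-price policies are admissible in all feasible sets considered, the optimal rate always dominates $B^{\ast}$.

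For (i), I start from an optimal cyclic policy $\pi^{\ast}=((k_1,\tauu_1),\dots,(k_P,\tauu_P))$ (which exists by Proposition~\ref{prop: Weakly Coupled General}), with rate $\cR^{\ast}$, and isolate a single phase $j$ with $\tauu_j>\tau$. Replacing $\tauu_j$ by $\tauu_j+\delta$ (for any $\delta$ with $\tauu_j+\delta\ge\tau$) leaves every exit state unchanged by (A1), hence leaves every other phase's revenue unchanged; by (A2) only the numerator moves, by $B_{k_j}\delta$, while the cycle length moves by $\delta$. Thus the rate becomes $\cR(\delta)=(N+B_{k_j}\delta)/(D+\delta)$, a linear-fractional function whose derivative has the constant sign of $B_{k_j}-\cR^{\ast}$. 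Optimality forbids a strict increase, so necessarily $B_{k_j}\le\cR^{\ast}$ (otherwise enlarging the phase, equivalently the fixed-price policy at $k_j$, would beat $\pi^{\ast}$); and when $B_{k_j}\le\cR^{\ast}$ the rate is nonincreasing in $\delta$, so shrinking the phase down to duration $\tau$ cannot lower $\cR$. Applying this reduction to every over-long phase yields an optimal policy all of whose maximal constant-price runs have length at most $\tau$ (adjacent phases carry distinct prices, so no merging occurs), except in the degenerate case $P=1$, where $\pi^{\ast}$ is a single infinite phase, i.e.\ a fixed-price policy. This is precisely the claimed dichotomy.

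For (ii), assume $\sigma\ge\tau$, so $M=\lceil\tau/\sigma\rceil=1$ and every phase of a policy in $\cP^{+}_{\sigma}$ has duration at least $\sigma\ge\tau$; hence the affine description applies to \emph{every} phase, the state relevant to the future is just the current price, and along a cyclic policy with price cycle $(k_1,\dots,k_P)$ and durations $(\tauu_1,\dots,\tauu_P)$ the rate is $\cR=\big(\sum_j g(k_{j-1},k_j)+\sum_j B_{k_j}(\tauu_j-\tau)\big)\big/\sum_j\tauu_j$, where $g(k,k'):=A_{k'}+\langle\mathbf C_{k'},\bar{\boldsymbol\Theta}_k\rangle$. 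For fixed prices this is quasi-linear in the durations on $[\sigma,\infty)^P$, so its maximum is attained either at the finite vertex $\tauu_j\equiv\sigma$ or by sending some $\tauu_j\to\infty$, the latter giving the fixed-price rate $B_{k_j}\le B^{\ast}\le\sup_{\cA_\sigma}\cR$. Thus every cyclic policy in $\cP^{+}_{\sigma}$ is dominated by a $\sigma$-policy with all phases equal to $\sigma$, i.e.\ by a member of $\cA_\sigma$, the reduction to cyclic competitors being the same $\limsup$ argument as in Proposition~\ref{prop: Weakly Coupled General}. Hence $\sup_{\cP^{+}_{\sigma}}\cR=\sup_{\cA_\sigma}\cR$, and Proposition~\ref{cor: Weakly Coupled} with $M=1$ furnishes a \emph{simple} cyclic $\sigma$-policy attaining it.

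For the threshold, I compare $B^{\ast}$ with the rate of a nontrivial ($P\ge2$) simple cyclic $\sigma$-policy, which by the previous step may be taken with all phases equal to $\sigma$: writing $\bar B_c=\tfrac1P\sum_j B_{k_j}$ and $\bar g_c=\tfrac1P\sum_j g(k_{j-1},k_j)$, its rate is $\cR_c(\sigma)=\bar B_c+(\bar g_c-\tau\bar B_c)/\sigma\to\bar B_c\le B^{\ast}$ as $\sigma\to\infty$. There are only finitely many simple price cycles, and each one with $\bar B_c<B^{\ast}$ satisfies $\cR_c(\sigma)<B^{\ast}$ once $\sigma$ exceeds an explicit threshold; taking $\sigma_0$ to be the maximum of these finitely many thresholds makes every such cycle strictly worse than the best fixed price. \textbf{The main obstacle is the tie case} $\bar B_c=B^{\ast}$, which occurs exactly when a nontrivial cycle consists solely of prices that are individually optimal fixed prices: there $\cR_c(\sigma)-B^{\ast}=(\bar g_c-\tau B^{\ast})/\sigma$ has a $\sigma$-independent sign, so one must show $\bar g_c\le\tau B^{\ast}$, i.e.\ that cycling among co-optimal constant prices cannot strictly raise the rate. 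I expect to settle this by an exchange/telescoping argument on the steady states $\bar{\boldsymbol\Theta}_{k_j}$ via (A2); note that when the optimal fixed price is unique every nontrivial simple cycle already has $\bar B_c<B^{\ast}$, so the threshold is immediate and only the degenerate non-unique case requires this extra step.
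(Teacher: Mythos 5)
Your proposal is correct and follows essentially the same route as the paper's proof: part (i) via the affine property making $\cR$ linear-fractional in any single phase duration with a constant-sign derivative (so either a fixed price dominates or the phase shrinks to $\tau$), part (ii) via the same reduction to $\sigma$-policies with all phases exactly $\sigma$ followed by Proposition~\ref{cor: Weakly Coupled}, and the threshold via comparing the finitely many simple cycles' rates $\bar B_c+(\bar g_c-\tau\bar B_c)/\sigma$ with $B^{\ast}$. The tie case $\bar B_c=B^{\ast}$ that you flag as an unresolved obstacle is not handled by the paper either — its proof explicitly restricts the threshold claim to ``the generic case'' where $\max_{k'}B_{k'}$ is attained strictly — so your argument matches the paper's proof in both substance and scope.
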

%{VA: \sf is it true to replace the above by the more general result and more interesting from an application point of view : $\cR(\pi)\leq \cR(\pi^-)+\varepsilon$ where $\pi^-$ is obtained by truncating $\pi$.}

\begin{proof}
\begin{itemize}
\item[$i.)$]
Suppose that the optimum cannot be reached by a fixed-price policy. Let $\pi^*=((k_1,\tauu_1),\ldots,(k_n:\tauu_n))$ be a cyclic optimal strategy. We will show by contradiction that $\tauu_1\leq \tau$. Suppose that $\tauu_1 > \tau$. Replace $\pi^*$ by  $\pi_s=((k_1,s),(k_2,\tauu_2),\ldots,(k_n,\tauu_n))$ with  $s \in [\tau,\infty)$. Observe that
 $$\cR(\pi_s)= \frac{1}{s+\sum_{i=2}^n  \tau_i} (U+V(s-\tau)),$$
where $U$ and $V$ do not depend on the value of $s \geq  \tau$. It follows that the derivative of $\cR(\pi_s)$ has a sign that does not depend on $s$. The sign must be negative otherwise the optimum could be reached by the fixed-price policy with price $k_1$.
Hence  $\cR(\pi_s)$ for $s\in  [\tau,\infty)$ reaches its optimum at $s=\tau$ a contradiction  with $\tauu_1 > \tau$. Because the choice of which phase is first in a cyclic strategy is arbitrary, hence, all the $\tauu_l$ are less than $\tau$.

\item[$ii.)$]
The same argument as the one used in the proof of Proposition \ref{prop: Affine_Reduction} $i.)$, implies that for $\sigma \geq \tau$ the optimum of  $\cR$ on  $\mathcal{P}^{+}_{\sigma}$ is reached on a fixed-price policy or on  a $\sigma$-policy  $\pi=((k_j,\tauu_j):j\geq 1)$ with  $\tauu_j =\sigma$ for all $j \in \N$. By Proposition \ref{cor: Weakly Coupled}, an optimal   $\sigma$-policy  can be considered simple cyclic (in particular $k_{j+1}\neq k_j$ unless it is a fixed-price policy).  Let $\pi_c =(k_1,\ldots,k_n)$ (where each $k_i$ is set for $\sigma$ periods) be an optimal policy.

From the definition of affine revenue functions, we note  that
$$\cR(\pi_c)=  \frac{1}{n \sigma} \sum_{i=1}^{n}  A_i+B_i(\sigma-\tau)+T_{i,i+1}$$
where $A_i$ and $B_i$ are functions of $k_i$ and $T_{i,i+1}=\langle { \bold{C}_{i},\boldsymbol{\bar{\Theta}}(i+1)}\rangle$ is a function of $(k_i,k_{i+1})$ (and not of  $\sigma$) and where $k_{n+1}:=k_1$.  Observe also that $B_i$ is the long-run average revenue of the fixed-price policy with price $k_i$.

As $\sigma \to \infty$, it is clear that the fixed-price policy with price $k$ such that $B_k = \max_{k'} B_{k'}$ becomes an increasingly better approximation of an optimal policy in   ${\mathcal A}_{\sigma}$. In the generic case where $\max_{k'} B_{k'}$ is a strict maximum for $k'=k$, then as soon as $\sigma\geq \sigma_0$, where $\sigma_0$ is such that for any simple cyclic vector $(k_1,\ldots,k_n)$, we have
$$B_k(n\sigma_0 + \tau - \sum_{i=1}^{n} \frac{B_i}{B_k} (\sigma_0-\tau)) >  \sum_{i=1}^{n}  A_i+T_{i,i+1},$$
then the fixed-price policy with price $k$ is strictly optimal on $\cP^+_{\sigma}$.
\hfill $\square$
\end{itemize}
\end{proof}\bigskip

%{\blue \small Maybe wave the following section because it only brings extra notations and confusion??! : We end this section by a refinement of Theorem~\ref{thm: Main} for $M=1$.
%\begin{dfn}

%For any $\sigma \geq 1$, we denote by
%\begin{itemize}
%\item $\mathcal{P}^-_{\tau}$ the set of policies $\pi=((k_j,\tau_j):j\geq 1)$ such that $\tau_j\leq \tau$ for all $j\geq 1$.
%$\mathcal{P}^+_{\sigma}$ the set of policies $\pi=((k_j,\tau_j):j\geq 1)$ such that $\tau_j\geq \sigma$ for all $j\geq 1$.
%\end{itemize}
%\end{dfn}
%This is the set of policies where each price is set for at least $\sigma$ but not necessarily in multiples of $\sigma$, with $\cA_\sigma\subseteq \cP_\sigma^+.$

%\begin{prop}\label{prop: Affine_Admissible} Under (M-BP) if $\sigma\geq \tau$,
% there exists a simple cyclic
% $\sigma$-policy that optimizes $\cR$ on  $\mathcal{P}^{+}_{\sigma}$. Moreover, there exists a threshold $\sigma_0$ such that, if $\sigma\geq\sigma_0$, then there exists a fixed-price policy that is optimal on $\cP^+_{\sigma}$.
%\end{prop}

The previous proposition brings some refinements to Theorem~\ref{thm: Main}.  In the unconstrained case, since an optimal strategy is $\tau$-simple and cyclic, we already know that no single price is set during more than $2\tau$ periods. However, the affine structure of the revenue reduces this bound to $\tau$ periods. Moreover,  recall that Theorem~\ref{thm: Main} characterized an optimal policy in $\cA_\sigma$. When $\tau\geq \sigma$  part $ii.)$ of the previous proposition generalizes the statement of Theorem~\ref{thm: Main} to $\cP_{\sigma}^+$. Finally, note that $ii.)$ of this proposition claims also that if the pricing pace is too slow ($\sigma$ large), then a fixed-price policy is optimal.

\section{Absence of Structure and Complexity}\label{sec: Complexity}
We devote this section to complementing our discussion on the complexity of the solutions obtained in Theorem~\ref{thm: Main} and Proposition~\ref{prop: Weakly Coupled General}. We provide two examples, respectively; one for the general setting of weakly coupled revenue function and one for the specific model of changing valuation, which show  unexpected optimal policy behavior. These examples confirm that one cannot expect to reduce the optimization (feasible) set through some simplifying structure of the optimal policy.

\subsection{Example of an Optimum Along an Arbitrary Cyclic Strategy for  Weakly Coupled Revenue Functions.}

We revisit the general setting of weakly coupled revenue functions. Despite the results obtained in Section~\ref{sec: Weakly Coupled}, the question of whether one can reduce the complexity of such cyclic policy even further remains. The next simple example shows that in general this is not possible. In particular, any cycle of any size can be an optimal solution of an adequately chosen  weakly coupled revenue function. \vspace{0.2cm}

\begin{exm}Let $\bar\pi=(w_1,...,w_n)$ be a finite simple cyclic strategy. Fix $U>0$, and define $f$ such that
$$f(w,w')= \begin{cases} U \text{ \ if \ } (w,w') = (w_i,w_{i+1}) \text{\ for some  } i \in [1,n] \\
0 \text{ \ if not, }  \end{cases}$$
where we used the notation $(w_n,w_{n+1}):=(w_n,w_{1})$.

\noindent The following is straightforward from the definition of $f$. We have
$$\varphi(\bar \pi)=U,$$
and for any strategy $\pi \neq \bar\pi$, we have that
$$\varphi(\bar\pi)-\varphi(\pi) \geq \frac{1}{K} U.$$
\end{exm}

%With respect to the proof of our main results in Section~\ref{sec: Main results}, we retain from this section the following.

\bigskip

%\noindent {\bf Conclusion.} The conclusion of this section is  that, in view  of Corollaries \ref{cor: Weakly Coupled} and \ref{cor: algo}, the  proof of Theorems \ref{thm: Main} and \ref{thm: Algo} will be complete if we show that the revenue function introduced in Section~\ref{sec: Model_Main} is $\bTau$-weakly coupled.

\subsection{Example of an M-BP Model With an Increasing Optimal Cyclic Policy With Length $K-1$.} \label{Sec: Example}

In this section we construct the example that proves Proposition~\ref{prop: Example_basic_intro}. This proposition confirms that due to the changing valuations feature of the model, one can construct an optimal cyclic policy that is increasing and involves all but one  of the available prices.

The idea behind this example is the following. First, we standardize the return of any constant price strategy and set it equal to $1$ by carefully choosing the parameters $\boldsymbol\gamma$ and {\bf v}. Next we choose the transition probabilities $q_{i,j}$ as follows. We say that a customer is in state $i$ if her current valuation is $v_i$. We set $q_{i,j}=0$, from any state $i\in [1,K-2]$ to { any} state $j\leq K$, except for $j=i+2$. We denote by  $(i\, (i+1))$ the expected revenue generated by all customers with valuation strictly lower than $v_i$, who enter at a date where the price is $i$ and see the price $i+1$ at the subsequent date. We set $q_{i,i+2}$ so that $(i\, (i+1))$ is a positive quantity $U\ll 1$ that does not depend on $i\in [2,K-1]$. Since the transition probabilities from $i-1$ to states that are { strictly} higher than $i+1$ are zero, one observes that whenever a price $i$ appears in the strategy, it is { optimal} to follow it with the price $i+1$, otherwise, there is a shortfall of $U$ in the revenue.
Finally, the transition probability, ${q_{K-1,j}}$, is set to zero for all states $j$, except for state $j=2$; while the transition $q_{K-1,2}$ is chosen sufficiently large so that the expected revenue $(K\,2)$ is of order $\frac{11}{10}\,U$. The latter forces an optimal strategy to contain the string $(K,2)$. Then, as discussed above, $2$ must be followed by $3$, then $4$, etc., until  $K$, which determines the optimal strategy as  $(2,3,\ldots,K-1,K)$.

%give an example that proves Proposition \ref{prop: Example} and shows that no simple structure could be expected on the optimizing policies under the changing valuation framework.

 The values of ${\bf v},\boldsymbol\gamma$ and $Q$,  are designed so that the key properties (E0)--(E2), defined below, hold. These properties are sufficient to prove the optimality of the strategy $\bar\pi=(2,\ldots,K)$. We first set ${\bf v}$ and $\boldsymbol\gamma$ such that (E0) and (E1) hold. Then the first $K-2$ rows of $Q$ are chosen so that the first identity of (E2) holds. Then the row $K-1$ is chosen to guarantee the second identity of (E2). Finally the last row of $Q$, that does not play a role in the optimization, is chosen to be $(0,0,\ldots,0,1)$.

%give an example that proves Proposition \ref{prop: Example} and shows that no simple structure could be expected on the optimizing policies under the changing valuation framework.

\begin{exm}~\label{ex2}  Fix $K \geq 5$. Let $ \varepsilon=\frac{1}{200K}$. %Consider a single class arrival process where one customer arrives every period, with initial valuation $\cV_t(0)$ distributed following a given vector $\gamma$. The valuation process $(\cV_t(s):s>0)$ is assumed to be Markovian governed by a transition matrix $Q$.  The primitives of the problem $(\boldsymbol\gamma,Q)$ and the prices vector $v$ are defined as follows:
 Take ${\bf v},\boldsymbol\gamma$ and $Q$ as follows.
\begin{align*}
\forall  i \in [1,K] : \gamma_i&=(1-\varepsilon)\varepsilon^{-K+i} \\
\forall  i \in [1,K] :  v_i&=\varepsilon^{K-i} / (1-\varepsilon^{K+1-i}) \\
\forall  i \in [2,K-1], \forall j \neq i+1 :  q_{i-1,i+1}&=\frac{10}{11} \varepsilon^{K+1} (1-\varepsilon^{K-i}),  \quad   q_{i-1,j}=0  \\
 \forall j \neq 2 : q_{K-1,2}&=\varepsilon^2 (1-\varepsilon^{K-1})/(1-\varepsilon), \quad   q_{K-1,j}=0\\
 \forall j\in [1,K -1] : q_{K,j}&=0, \quad  q_{K,K}=1
\end{align*}

Let $\Gamma_i=\sum_{l=i}^K \gamma_l$. With our choice of parameters, let $\a=1$ and $U=(1-\varepsilon) \frac{10}{11} \varepsilon^{K-1} \leq \a$. Then the properties \eqref{e0} -- \eqref{e2} hold true, where we denote by $\bq_{i,j}:=\gamma_i q_{i,j}$, and where $\xi \in [\frac{1}{10},\frac{1}{10}+2\varepsilon]$.

\begin{equation}  \label{e0} \tag{E0} \forall i \in [1,K] : \Gamma_i v_i=\a \end{equation}

\begin{equation} \label{e1} \tag{E1} \forall  i \in [1,K-1]  : v_{i} \leq \frac{\varepsilon}{1-\varepsilon} v_{i+1}.\end{equation}

%$$\nu:= \frac{U}{10K}, \quad \eta= \frac{U}{100K}$$
%and assume that
\begin{equation} \label{e2}\tag{E2}
\begin{cases}
\forall  i \in [2,K- 1]  : \bq_{i-1,i+1}v_{i+1}=U, \quad  \forall j \neq i+1 : q_{i-1,j}=0\\
(\bq_{1,3}+\bq_{2,4}+\ldots + \bq_{K-2,K} + \bq_{K-1,2}) v_2= (1+\xi)U,  \quad  \forall j \neq 2 : q_{K-1,j}=0\\
\forall  j\in [1,K -1] : q_{K,j}=0, \quad q_{K,K}=1\end{cases} \end{equation}
\end{exm}

The following claim shows that {\bf Example \ref{ex2}} satisfies Proposition \ref{prop: Example_basic_intro}.

\medskip

\noindent{\sc Claim.}    {\it Let {\bf $\bar\pi=(2,\ldots,K)$}. We have that $\cR(\bar\pi)=\a+U+ \frac{\xi}{K-1}U$ and for any $\pi \neq \bar\pi$
    $$\cR(\pi)\leq \cR(\bar\pi)-\frac{1}{20(K-1)}U.$$}

\begin{rem} Observe that for   \eqref{e0} -- \eqref{e2}  to hold, while $K\geq 5$, we must have $U=\cO(\varepsilon^{K-4} \a)$; thus the loss if one adopts a fixed-price strategy with any price $v_2,\ldots,v_K$, is small compared to an optimal strategy. Note also the importance of the term $\bq_{K-1,2} v_2$ in our assumptions since $(\bq_{1,3}+\bq_{2,4}+\ldots + \bq_{K-2,K}) v_2 \leq \varepsilon \,U$.

\end{rem}

\noindent{\sc Proof of the Claim.}\\
We denote by $(ij)$ the expected revenue function  generated by the customers with valuation strictly lower than $v_i$, who enter at a date where the price is $i$ and see the price $j$ at the subsequent date. Thus,  for $\pi=(w_1,\ldots,w_J)$
$$\cR(\pi)=\a+\frac{1}{J} \left(\sum_{i=1}^{J-1} (w_i w_{i+1})+(w_J w_1)\right).$$

\begin{lem} Under (E1)--(E2) we have that
\begin{align*}
(i (i+1))&=U  : \forall i \in [1,K-1] \\
(i (i+\ell))&=0 : \forall 1 \leq i \leq  i+\ell \leq K, \ell\geq 2 \\
(K 2)&=\frac{11}{10} U  \\
(i j)&\leq \left(1+\frac{1}{100 K}\right)  U  :  \forall i,j \in [1,K], (i j) \neq (K 2). \end{align*}
\end{lem}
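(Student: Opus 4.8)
The plan is to evaluate each cross-term $(ij)$ directly from the dynamics of Example~\ref{ex2} in the case $\tau=1$, and then read the four assertions off the identities \eqref{e1}--\eqref{e2}. Since a patient customer buys as soon as her valuation weakly exceeds the current price, a customer arriving while the price is $i$ with initial valuation $v_m$, $m<i$, does not purchase on arrival; in the unique subsequent period (recall $\tau=1$) the price is $j$, and she then pays $v_j$ precisely when her updated valuation $v_n$ satisfies $n\ge j$. Averaging over the arrival law $\boldsymbol\gamma$ and the transition law $Q$, the first step is to establish the master formula
\[
(ij)\;=\;v_j\sum_{m=1}^{i-1}\gamma_m\sum_{n\ge j}q_{m,n}.
\]
I would then insert the sparse structure of Example~\ref{ex2}: $q_{m,\cdot}$ is supported on $\{m+2\}$ for $m\in[1,K-2]$ and on $\{2\}$ for $m=K-1$. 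This collapses each inner sum to at most one surviving term and turns the computation into bookkeeping over which source states $m<i$ can reach level $j$.

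For the first two assertions set $j=i+\ell$. If $\ell=1$, the requirement $m+2\ge i+1$ together with $m\le i-1$ forces $m=i-1$, whence $(i(i+1))=\bq_{i-1,i+1}v_{i+1}=U$ by the first line of \eqref{e2}; if $\ell\ge 2$ the requirement becomes $m\ge i+\ell-2\ge i$, incompatible with $m\le i-1$, so the sum is empty and $(i(i+\ell))=0$. For $(K2)$ every source state survives (states $m\in[1,K-2]$ reach $m+2\ge 2$, and state $K-1$ reaches $2$), so
\[
(K2)\;=\;v_2\Big(\sum_{m=1}^{K-2}\bq_{m,m+2}+\bq_{K-1,2}\Big)\;=\;(1+\xi)U
\]
by the second line of \eqref{e2}. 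Here I would note that the dominant piece is $\bq_{K-1,2}v_2=\varepsilon^{K-1}=\tfrac{11}{10}(1-\varepsilon)^{-1}U$, while $\sum_m\bq_{m,m+2}v_2=O(\varepsilon)U$ by \eqref{e1}; this pins $\xi$ near $\tfrac1{10}$ and gives $(K2)\approx\tfrac{11}{10}U$, matching the stated value up to the $\xi$-refinement carried in \eqref{e2}.

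The substance of the lemma is the uniform bound in the fourth assertion. Using $\bq_{m,m+2}=U/v_{m+2}$ from \eqref{e2}, the master formula gives, for every $(ij)\ne(K2)$ with either $i<K$ or $j\ge 3$ (so that the $K-1$ source state is absent),
\[
(ij)\;=\;U\,v_j\sum_{m=\max(1,\,j-2)}^{\min(i-1,\,K-2)}\frac{1}{v_{m+2}} .
\]
The largest term of this sum is $v_j/v_{\max(3,j)}\le 1$, and by \eqref{e1} consecutive terms shrink by a factor $v_{m+2}/v_{m+3}\le\varepsilon/(1-\varepsilon)$, so summing the geometric tail yields $(ij)\le U\,\tfrac{1-\varepsilon}{1-2\varepsilon}=U\big(1+\tfrac{\varepsilon}{1-2\varepsilon}\big)$. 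Since $\varepsilon=\tfrac1{200K}$ one checks $\tfrac{\varepsilon}{1-2\varepsilon}=\tfrac1{200K-2}\le\tfrac1{100K}$, which is exactly the claimed bound. The two boundary situations behave the same way: for $j\le 2$ the leading term $v_j/v_{\max(3,j)}$ is already $O(\varepsilon)$, and for $i=K$, $j=1$ the extra term $\bq_{K-1,2}v_1=(v_1/v_2)(K2)$ only adds an $O(\varepsilon)U$ contribution, so both stay well below $U(1+\tfrac1{100K})$.

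I expect the only delicate point to be this last step---keeping the geometric-tail estimate uniform over all the sub-cases (varying $i,j$, and the two boundary configurations) and verifying the single numerical inequality $\tfrac{\varepsilon}{1-2\varepsilon}\le\tfrac1{100K}$; everything else follows mechanically from the sparsity of $Q$ and the identities \eqref{e1}--\eqref{e2}. Finally, the degenerate case $i=1$ of the first assertion involves no customers with valuation below $v_1$, so it is vacuous and in any case irrelevant to the optimal cycle $(2,\ldots,K)$.
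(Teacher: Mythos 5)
Your proof is correct and follows essentially the same route as the paper's: the master formula $(ij)=v_j\sum_{m<i}\gamma_m\sum_{n\ge j}q_{m,n}$ collapsed by the sparsity of $Q$, with (E2) supplying the exact values of $(i(i+1))$, $(i(i+\ell))$ and $(K2)$, and the geometric decay from (E1) supplying the uniform bound $\left(1+\tfrac{1}{100K}\right)U$. If anything, your treatment is slightly more careful than the paper's---you handle the boundary cases $j=1$ and $i=1$ explicitly, and you correctly record $(K2)=(1+\xi)U$ rather than exactly $\tfrac{11}{10}U$---but the substance of the argument is identical.
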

\begin{proof}
The first three equalities follow directly from the assumption (E2).

Consider $i \leq K-1$ and $2 \leq j \leq i$. We have that
$$(i j)= (\bq_{j-2, j}+\bq_{j-1, j+1}+\ldots+\bq_{i-1,i+1})  v_j \leq (1+2 \varepsilon)U$$
due to assumptions (E1) and (E2) (when $j=2$ we used the notation $\bq_{0,2}=0$).

Now, for $j >2$
$$(K j)= (\bq_{j-2, j}+\bq_{j-1, j+1}+\ldots+\bq_{K-2,K})  v_j \leq (1+2 \varepsilon)U.$$
and finally,
$$(K 1)= (\bq_{1, 3}+\bq_{2, 4}+\ldots+\bq_{K-2,K})  v_1 + \bq_{K-1,2}v_1 \leq  \varepsilon \frac{11}{10}U.$$ \hfill 
\end{proof}

It follows from the first two equalities of the lemma that  $\cR(\bar\pi)=\a+U+\frac{1}{10(K-1)}U$.
Take $\pi$ to be another simple cyclic strategy. If $\pi$ does not contain the string $(K,2)$ it follows from the third equation of the lemma that
  $\cR(\pi)\leq \a+ \left(1+\frac{1}{100 K}\right)U \leq \cR(\bar\pi)-\frac{1}{20(K-1)}U.$  If, to the contrary, $\pi$ contains the string $(K,2)$ but is distinct from $\bar \pi$, then
  $\pi$ contains another string $(i,j)$ with $j\neq i+1$; hence using the first, second, and third inequalities of the lemma we get if $L$ is the length of the cycle of $\pi$ :
  \begin{align*}\cR(\pi)&\leq \a+\frac{1}{L}\left(\frac{11}{10} U +(L-2)\left(1+\frac{1}{100 K}\right) U\right) \leq \cR(\bar \pi)-\frac{1}{20(K-1)}U \end{align*}
  and the claim is proved.  \hfill $\square$

\subsection{Approximating the Solution of Problem~(\ref{Eq: General_Main_Form})}

The lack of simplifying structure of the  optimal policies confirms the curse of dimensionality, which seems inherent to the changing valuations feature of such type of intertemporal pricing.
This was argued in Section~\ref{sec: simple} by relying specifically on example~\ref{ex2}. In such context, the exponential complexity makes our results even more valuable in suggesting efficient approximations for problem (\ref{Eq: General_Main_Form}). Indeed, as a result of the untractability of problem (\ref{Eq: General_Main_Form}), one would naturally look for approximations. For instance, one could consider optimizing over fixed-price policies. Such policies are shown to be asymptotically optimal in various settings (e.g., the recent work of \citet{Yiwei_Far_Trich18} who prove the asymptotic optimality of fixed-price policies under price commitment). Another approach is to approximate a customer's stochastic valuation by a carefully selected constant and accordingly, use the results of \citet{LiuCooper15}. Our results can be viewed as a third approach. By introducing the notion of pricing pace, we formulated (\ref{Eq: General_Pace_Form}) which is a constrained version of (\ref{Eq: General_Main_Form}) (and of independent interest). Hence, the optimal solutions of (\ref{Eq: General_Pace_Form}) defined and obtained through Theorem~\ref{thm: Main} and \ref{thm: Algo} are natural approximations of (\ref{Eq: General_Main_Form}). Moreover, such approximations by construction preserve the nature of the problem with respect to changing valuations. We prove by many numerical examples (see, Section~\ref{sec: Numerics}, Finding III and V), that such solutions even for small values of $M$ outperform the two other approaches of fixed-price policies and constant valuations. This in turns shows that the algorithm does give an added value to the pricing optimization even if it is operated in a constrained setting.

\medskip

\section{Extensions}\label{sec: extensions}

\subsection{Unbounded Patience Setting. The (M-UP) Model}

One main assumption  throughout this paper is the boundedness of the maximum patience level.  In this section, we consider relaxing this assumption and setting $\bbtau=\infty$ in the Markovian model. This model was denoted by (M-UP) and is only determined by the pair $(\boldsymbol\gamma,Q)$. Under this setting, the time at which the valuation process eventually reaches the value $v_0=0$ is endogenous, governed only by the matrix $Q$. We still assume that its expected value is finite. A sufficient condition for this to hold is
\begin{equation} \|Q\|:=1-\nu<1, \tag{$\mathcal C$} \label{gap} \end{equation}
%where $Q_m$ represents the principal minor of $Q$ on indices $\{1,...,m\}$ (
for some $\nu\in(0,1)$, where $ \|\cdot\|$ is the infinity norm for matrices, that is the maximum absolute row sum of the matrix. This same condition guarantees that there are no accumulation of customers in the system, equivalently, at any time $t$ the expected number of customers in the system is uniformly bounded (see, Appendix~A for a proof of this claim).

We now analyze the (M-UP) case by approximating it by an (M-BP) model. For this,  fix any $\varepsilon>0$ and introduce the quantity \begin{equation}\label{Eq: tau_bar_eps} {\tau}_\varepsilon=\left|\ln\varepsilon/\ln (1-\nu)\right|.\end{equation}

By definition,  $1-\nu$ is an upper bound on the probability of a customer - no matter the state - to remain for an additional period in the system. It then follows from \eqref{gap} and the expression for $\tau_\varepsilon$ that the probability that an arriving customer  spends more than $\tau_\varepsilon$ in the system is lower than $(1-\nu)^{\tau_\varepsilon}=\varepsilon$. Therefore, we introduce the (M-BP) model with patience level $\tau_\varepsilon$ and the same primitives $(\boldsymbol\gamma,Q)$ as the (M-UP)
we are studying.

To proceed, we say that a pricing policy $\pi^*$ is $\epsilon$-optimal on a subset $\cA$ of $\cP$ for some $\epsilon>0$, if \[\sup_{\pi\in\cA}\mathcal{R}(\pi)\leq \mathcal{R}({\pi^*})+\varepsilon.\]

We have the following

\begin{prop} Under condition \eqref{gap}, if
$\pi^*$ is an optimal strategy on any subset $\cA$ of $\cP$, for the (M-BP) model given by $\tau_\varepsilon$ and $(\boldsymbol\gamma,Q)$, then $\pi^*$ is  $(v_K \varepsilon/\nu)$-optimal on $\cA$ for the (U-BP) model given by $(\boldsymbol\gamma,Q)$.
\end{prop}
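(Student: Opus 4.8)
The plan is to compare the (M-UP) revenue function directly with that of the auxiliary (M-BP) model built on the truncated patience level $\tau_\varepsilon$ using the same primitives $(\boldsymbol\gamma,Q)$. Write $\cR^{UP}$ and $\cR^{BP}$ for the two long-run average revenue functions and, correspondingly, $L^{UP}_t$ and $L^{BP}_t$ for the expected revenues accumulated over the first $t$ periods. The heart of the argument is the uniform two-sided estimate
\[ 0 \le \cR^{UP}(\pi)-\cR^{BP}(\pi) \le \frac{v_K\,\varepsilon}{\nu}, \qquad \text{for every } \pi\in\cP. \]

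To establish it, I would couple the two systems so that each arriving customer follows the \emph{same} valuation trajectory and makes the \emph{same} purchasing decision in both models for ages $0,\dots,\tau_\varepsilon-1$; the only difference is that in the (M-BP) model the customer is removed (valuation set to $0$) as soon as his age reaches $\tau_\varepsilon$. Hence a customer who buys before age $\tau_\varepsilon$ contributes identically to both systems, while a customer who has not bought by then yields no revenue in (M-BP) but may still buy in (M-UP). This immediately gives $L^{UP}_t\ge L^{BP}_t$ (the lower bound), and identifies the excess $L^{UP}_t-L^{BP}_t$ as exactly the expected revenue collected in the first $t$ periods from purchases made by customers of age at least $\tau_\varepsilon$.

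To bound this excess per period, I would invoke condition \eqref{gap}. Since $\|Q\|=1-\nu$, from any state the one-step probability of remaining in the system is at most $1-\nu$, so by the Markov property a customer is still present at age $s$ with probability at most $(1-\nu)^s$. As exactly one customer arrives per period, the expected number of customers present at a given period with age at least $\tau_\varepsilon$ is at most
\[ \sum_{s\ge\tau_\varepsilon}(1-\nu)^s = \frac{(1-\nu)^{\tau_\varepsilon}}{\nu} = \frac{\varepsilon}{\nu}, \]
by the definition \eqref{Eq: tau_bar_eps} of $\tau_\varepsilon$. Each such customer contributes at most $v_K$ to that period's revenue, so the excess revenue in any single period is at most $v_K\varepsilon/\nu$; summing over the first $t$ periods yields $L^{UP}_t-L^{BP}_t\le t\,v_K\varepsilon/\nu$, i.e. $\tfrac1t(L^{UP}_t-L^{BP}_t)\le v_K\varepsilon/\nu$ for \emph{every} $t$. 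Since this holds with a constant independent of $t$, passing to the $\limsup$ preserves it and gives the displayed estimate.

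The proposition then follows from a short three-term chain. For any $\pi\in\cA$,
\[ \cR^{UP}(\pi) \le \cR^{BP}(\pi)+\frac{v_K\varepsilon}{\nu} \le \cR^{BP}(\pi^*)+\frac{v_K\varepsilon}{\nu} \le \cR^{UP}(\pi^*)+\frac{v_K\varepsilon}{\nu}, \]
where the first inequality is the excess bound, the second is the optimality of $\pi^*$ for the (M-BP) model on $\cA$, and the third is the lower bound $\cR^{BP}(\pi^*)\le\cR^{UP}(\pi^*)$. Taking the supremum over $\pi\in\cA$ gives $\sup_{\pi\in\cA}\cR^{UP}(\pi)\le\cR^{UP}(\pi^*)+v_K\varepsilon/\nu$, which is precisely $(v_K\varepsilon/\nu)$-optimality. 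The main obstacle is making the coupling precise enough that the excess is cleanly attributed to age-$\ge\tau_\varepsilon$ purchases and verifying that the per-period excess bound is uniform in $t$; this uniformity is what lets us control the difference of two $\limsup$s (which in general is not the $\limsup$ of the difference), after which the limiting step is immediate.
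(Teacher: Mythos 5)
Your proposal is correct and follows essentially the same argument as the paper: bound the (M-UP)--(M-BP) revenue gap uniformly by $v_K\varepsilon/\nu$ using condition \eqref{gap} and the definition of $\tau_\varepsilon$, then conclude with the same three-term chain of inequalities. The only difference is bookkeeping --- you account for the excess revenue period by period (expected number of customers of age at least $\tau_\varepsilon$ present, at most $\varepsilon/\nu$), while the paper accounts for it customer by customer (probability $\eta^{\pi}(m,\tau_\varepsilon)\le\varepsilon/\nu$ of outliving $\tau_\varepsilon$) --- which is the same count organized via Fubini and yields the identical bound.
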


\begin{proof}
Consider one customer that arrives to the system at some time $t$ with a valuation $v_m$ governed by a (U-BP) model to which we apply some policy $\pi\in\cA$. Recall that an (M-BP) model disregards any revenue generated by any customer that spends more than $\tau_\varepsilon$ in the system. Therefore, the difference between the expected revenues generated from respectively applying the same policy $\pi$ to both an (M-UP) model and an (M-BP) model with the same primitives, is bounded by $\eta^{\pi}(m,\tau_\varepsilon)\,v_K$, where $\eta^{\pi}(m,\tau_\varepsilon)$ is the probability in the (M-UP) model, that under $\pi$, the customer remains in the system  for a period greater than $\tau_\varepsilon$. By definition of $\tau_\varepsilon$ and condition~\eqref{gap}, $\eta^{\pi}(m,\tau_\varepsilon)\leq \varepsilon/\nu$. By considering all customers that arrived to the system by time $t$, the difference in expected revenues from these $t$ customers throughout the horizon will be upper bounded by $t\,v_K\varepsilon/\nu$. Let us denote by $\cR^U(\pi)$ (resp., $\cR^B(\pi)$) the long-run average revenue generated by applying policy $\pi$ under an (M-BP) (resp., (M-UP)) model. From the above we conclude that for any policy $\pi\in\cA$ we have that
$$\cR^U(\pi)\leq\cR^B(\pi)+v_K\,\varepsilon/\nu\leq \cR^B(\pi^*)+v_K\,\varepsilon/\nu\leq \cR^U(\pi^*)+v_K\,\varepsilon/\nu.$$ This completes our proof.
$\square$
\end{proof}

\medskip

As a corollary we obtain the following

\begin{prop}\label{optimal_results}
Under (M-UP), Theorems~\ref{thm: Main} and \ref{thm: Algo} as well as Propositions~\ref{prop:Eps_Affine} and \ref{prop: Affine_Reduction} hold after replacing the optimality concept by $\varepsilon$-optimality, and $\tau$ by $\tau_\varepsilon$.% and $\tau$-affine by $(\varepsilon,\tau_\varepsilon)$-affine
\end{prop}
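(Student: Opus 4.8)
The plan is to show that Proposition~\ref{optimal_results} follows directly by combining the $\varepsilon$-approximation result just established with the structural theorems proved earlier for (M-BP) models. The key observation is that all of Theorems~\ref{thm: Main} and \ref{thm: Algo} and Propositions~\ref{prop:Eps_Affine} and \ref{prop: Affine_Reduction} are statements about a fixed (M-BP) model, while the preceding proposition tells us precisely how the optimum of an (M-UP) model is captured, up to an additive error, by an (M-BP) truncation with patience level $\tau_\varepsilon$. So the strategy is a transfer argument: fix $\varepsilon>0$, pass to the (M-BP) model with patience $\tau_\varepsilon$ and the same primitives $(\boldsymbol\gamma,Q)$, apply the structural results there, and then import those conclusions back to (M-UP) with the optimality notion downgraded to $\varepsilon$-optimality.

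First I would fix $\varepsilon>0$ and define $\tau_\varepsilon$ as in \eqref{Eq: tau_bar_eps}, and consider the associated (M-BP) model. Since this truncated model satisfies the hypotheses of Section~\ref{sec: Model_Main}, it is a genuine (BP) model, so Proposition~\ref{propweakly} applies and its revenue function is $\tau_\varepsilon$-weakly coupled. Consequently Theorem~\ref{thm: Main} gives an $M$-simple cyclic $\sigma$-policy (and a $\tau_\varepsilon$-simple cyclic policy in the unconstrained case) that is \emph{exactly} optimal for the truncated model, and Theorem~\ref{thm: Algo} produces it in $\cO(K^{4M})$ computations. Likewise, because the truncated model is Markovian, Proposition~\ref{prop:Eps_Affine} shows its revenue pair is $\tau_\varepsilon$-affine and Proposition~\ref{prop: Affine_Reduction} gives the corresponding reductions (no price set for more than $\tau_\varepsilon$ periods, and the fixed-price threshold behavior), with every occurrence of $\tau$ now read as $\tau_\varepsilon$.

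The transfer step is where the preceding proposition does the work. Let $\pi^*$ be the policy that is exactly optimal for the truncated (M-BP) model on the relevant subset $\cA$ of $\cP$. By that proposition, $\pi^*$ is $(v_K\varepsilon/\nu)$-optimal on $\cA$ for the (M-UP) model, i.e.\ $\sup_{\pi\in\cA}\cR^U(\pi)\leq \cR^U(\pi^*)+v_K\varepsilon/\nu$. Since $\pi^*$ simultaneously carries all the structural features asserted by the (M-BP) results (cyclicity, $M$-simplicity, the bound on phase durations by $\tau_\varepsilon$, affineness of the revenue), each of those statements holds verbatim for (M-UP) once ``optimal'' is weakened to ``$\varepsilon$-optimal'' and $\tau$ is replaced by $\tau_\varepsilon$. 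Rescaling $\varepsilon$ to $\nu\varepsilon/v_K$ at the outset yields exactly $\varepsilon$-optimality in the stated sense, which is a cosmetic adjustment.

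The main obstacle, such as it is, is verifying that the subset $\cA$ over which we optimize is preserved under the transfer and that no structural claim secretly depends on the patience bound being finite in a way that degrades under the approximation. Concretely, one must check that the $\varepsilon$-gap is additive and uniform over $\cA$ (which the preceding proposition guarantees, since its bound $t\,v_K\varepsilon/\nu$ is policy-uniform before dividing by $t$), and that the complexity count of Theorem~\ref{thm: Algo} is stated in terms of $M=\lceil\tau_\varepsilon/\sigma\rceil$ rather than the now-infinite true patience. Both are immediate from the definitions, so the proof is essentially a bookkeeping corollary; I would present it as a short paragraph invoking the preceding proposition together with each cited result in turn.
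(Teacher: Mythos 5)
Your proof is correct and follows exactly the paper's intended route: the paper states this result as an immediate corollary of the preceding $(v_K\varepsilon/\nu)$-optimality transfer proposition, with no further argument, and your write-up is simply a careful elaboration of that same transfer — apply the exact (M-BP) results to the truncation with patience $\tau_\varepsilon$, then import them to (M-UP) with optimality downgraded to $\varepsilon$-optimality (up to the cosmetic rescaling of $\varepsilon$ you note).
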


\subsection{Infinite Horizon Discounted Revenue Function } \label{infinite_horizon}
In this extension, we address how the main results can be easily extended to the context of infinite-horizon discounted revenue functions. For this, we can revisit the weakly coupled framework of Section~\ref{sec: Weakly Coupled} and extend  Proposition~\ref{prop: Weakly Coupled General} accordingly. The rest of the analysis on weakly coupled functions and its implications on problems~(\ref{Eq: General_Main_Form}) and (\ref{Eq: General_Pace_Form}) similarly extend.

\noindent We start by a definition of weakly coupled revenue function in the discounted context. Let
$$\cR(\pi)=\sum_{n=0}^\infty e^{-r n} f(w_n,w_{n+1}).$$ The definition of $\varphi$ given in Equation~(\ref{eq: phi}) is also modified in a similar way. Finally, we say that a policy $\pi$ is \textit{pre-cyclic}, if it is of the form $(W_0,W_1,W_1,...)$ for some ``period'' $W_1$ and for some initial finite string of prices $W_0$. %We denote it by $\pi=(W_0;W_1).$

\begin{prop}\label{prop: Discounted}
For any $\bTau \geq 1$, for any function $f : \Omega^{\bTau} \times \Omega^{\bTau} \to \R^+$, we have the following.
There exists a pre-cyclic $\pi^* \in \cP$ such that,
$$\cR(\pi^*)=\sup_{\pi \in \cP} \cR(\pi).$$
Moreover,  $\pi^*=(W_0,W_1,W_1,\ldots)$, with $W_0=(w^0_1,\ldots,w^0_{P_0})$ and $W_1=(w^1_1,\ldots,w^1_{P_1})$, such that all the coordinates of $(W_0,W_1)$ are distinct, with $P_0+P_1\leq K^\bTau$.
\end{prop}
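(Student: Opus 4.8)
The plan is to recognize the discounted objective as a deterministic dynamic programming problem on the finite ``letter'' alphabet $\Omega^{\bTau}$, and to read the pre-cyclic structure off the fact that a greedy stationary selection on a finite set must trace a ``$\rho$-shaped'' orbit. Write $\beta := e^{-r} \in (0,1)$ and $\tilde K := K^{\bTau} = \#\Omega^{\bTau}$. Since $\Omega^{\bTau}$ is finite, $f \leq F := \max f < \infty$, so every series $\cR(\pi) = \sum_{n \geq 0} \beta^n f(w_n, w_{n+1})$ converges, is bounded by $F/(1-\beta)$, and the supremum is finite. I would introduce the value function
\[ V(w) := \sup\Big\{ \textstyle\sum_{n=0}^\infty \beta^n f(w_n, w_{n+1}) : \pi = (w_0, w_1, \ldots) \in \cP, \ w_0 = w \Big\}, \]
so that $\sup_{\pi \in \cP} \cR(\pi) = \max_{w \in \Omega^{\bTau}} V(w)$ (a maximum over a finite set), and expect $V$ to satisfy the Bellman equation $V(w) = \max_{w'}[f(w,w') + \beta V(w')]$.

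First I would pin down $V$ by a fixed-point argument. The Bellman operator $T$ on $\R^{\tilde K}$, $(Tu)(w) = \max_{w'}[f(w,w') + \beta u(w')]$, is a $\beta$-contraction for the sup-norm, hence has a unique fixed point $V^*$ by the Banach fixed point theorem. For each $w$ I would fix a letter $g(w)$ attaining the maximum, so that $V^*(w) = f(w, g(w)) + \beta V^*(g(w))$; this defines a deterministic stationary selection $g : \Omega^{\bTau} \to \Omega^{\bTau}$.

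The substantive step is the verification that $V^* = V$ and that $g$ is optimal among \emph{all} policies, not only cyclic or stationary ones. For the upper bound, given any $\pi = (w_0, w_1, \ldots)$ I would iterate the inequality $V^*(w_n) \geq f(w_n, w_{n+1}) + \beta V^*(w_{n+1})$ and telescope to obtain $V^*(w_0) \geq \sum_{n=0}^{N} \beta^n f(w_n, w_{n+1}) + \beta^{N+1} V^*(w_{N+1})$; since $0 \leq V^* \leq F/(1-\beta)$ and $\beta < 1$, the tail $\beta^{N+1} V^*(w_{N+1})$ vanishes as $N \to \infty$, giving $\cR(\pi) \leq V^*(w_0)$. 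For the lower bound, the trajectory generated by $g$ turns every Bellman inequality into an equality, and the same telescoping (now exact) shows its revenue equals $V^*(w_0)$; hence $V^* = V$ and starting $g$ from a maximizer $w_0^*$ of $V^*$ attains $\sup_\pi \cR(\pi)$. This verification — controlling the discounted tail uniformly over arbitrary, possibly non-eventually-periodic, policies — is the part that needs the most care, though it is routine given $\beta < 1$ and the boundedness of $f$.

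It remains to extract the structure, which is purely combinatorial. The orbit $w_0^*, w_1 = g(w_0^*), w_2 = g(w_1), \ldots$ lives in the finite set $\Omega^{\bTau}$, so there is a least index $n$ with $w_n = w_m$ for some $m < n$; then $w_0^*, \ldots, w_{n-1}$ are pairwise distinct and the orbit is periodic from index $m$ onward. Setting $W_0 = (w_0^*, \ldots, w_{m-1})$ and $W_1 = (w_m, \ldots, w_{n-1})$ yields $\pi^* = (W_0, W_1, W_1, \ldots)$ with all coordinates of $(W_0, W_1)$ distinct and $P_0 + P_1 = n \leq \tilde K = K^{\bTau}$, as required; if $w_0^*$ already lies on its own cycle then $P_0 = 0$ and $\pi^*$ is genuinely cyclic, a special case of pre-cyclic. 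This parallels the averaging argument behind Proposition~\ref{prop: Weakly Coupled General}, with the stationary Bellman selection $g$ playing the role that cutting at a repeated letter played in the undiscounted case.
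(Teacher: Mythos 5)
Your proof is correct, but it takes a genuinely different route from the paper's. The paper's argument is a cut-and-paste exchange performed directly on an optimal policy: it takes an optimizer $\pi^*$, locates the first string $\bar w$ that appears twice, decomposes $\pi^*=(W_0,W_1,\bar\pi)$ at the two occurrences of $\bar w$, and uses the two identities $\cR(\pi)=C(W_0,\bar w)+e^{-rP_0}\cR(\tilde\pi)$ and $\cR(\pi)=C'(W_0,W_1,\bar w)+e^{-r(P_0+P_1)}\cR(\bar\pi)$, with $\tilde\pi=(W_1,\bar\pi)$, to argue that the tail $\bar\pi$ can be replaced by $\tilde\pi$ without loss; iterating gives $(W_0,W_1,W_1,\ldots)$, and the distinctness of the coordinates of $(W_0,W_1)$ together with the bound $P_0+P_1\le K^{\bTau}$ comes for free from choosing $\bar w$ as the \emph{first} repeated string. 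Your Bellman-contraction argument buys something the paper's proof quietly assumes: attainment of the supremum. The paper starts from ``an optimal strategy $\pi^*$'' without justifying that one exists (this would require, e.g., compactness of $(\Omega^{\bTau})^{\N}$ in the product topology plus continuity of $\cR$, or an approximation argument), and its final step ``one can take $\bar\pi^*=\tilde\pi^*$'' also hides an infinite iteration and a passage to the limit. Your approach constructs an optimizer outright---the greedy orbit of the stationary selection $g$---and the verification telescoping plus the eventual periodicity of orbits of a deterministic map on a finite set deliver optimality, the pre-cyclic form, and the same bound $P_0+P_1\le K^{\bTau}$ in one stroke. The price is heavier machinery (contraction fixed point and a verification theorem) in place of the paper's two-line decomposition; in exchange you get a self-contained existence proof and the extra structural fact that an optimum can be generated by a stationary rule on $\Omega^{\bTau}$.
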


\begin{proof} %Pick a map  $w \in  \Omega^{\bTau} \mapsto \pi_w \in \cP$ such that
%Set $$\varphi(\pi)=\sum_{i=0}^\infty e^{-r i}f(w_i,w_{i+1}).$$ Suppose $\pi^*=(w_0,...,w_n,...)$ is the optimal policy. Let $l>0$ be the first index of $w$ such that $w_{l}=w_0$. Suppose that $l<\infty$. We can then write
Decompose any $\pi \in \cP$ as $\pi=(W_0,W_1,\bar{\pi})$ where $W_0=(w^0_1,\ldots,w^0_{P_0})$ and $W_1=(w^1_1=\bar{w},w^1_2,\ldots,w^1_{P_1})$ and $\bar{\pi}=(\bar{w},\ldots) \in \cP$ where $\bar{w} \in  \Omega^{\bTau}$ is the first string that appears twice in the expression for $\pi$. By definition of $\bar{w}$ we have that all the coordinates of $(W_0,W_1)$ are distinct.

Define $\tilde{\pi}=(W_1,\bar{\pi})$. Observe that for some functions $C$ and $C'$ one has
$$\cR(\pi)=C(W_0,\bar w)+e^{-rP_0} \cR(\tilde \pi)$$
as well as
$$\cR(\pi)=C'(W_0,W_1,\bar w)+e^{-r( P_0+P_1)} \cR(\bar \pi).$$

Applying the first equality to an optimal strategy $\pi^*$, we conclude that $\tilde \pi^*$ should maximize $\cR$ among all strategies that start with $\bar{w}^*$. Hence, one can take $\bar{\pi}^*={\tilde \pi}^*$ and still get an optimal strategy $\pi^{**}=(W_0^*,W_1^*,W_1^*,\ldots)$.  $\hfill \square$
\end{proof}
\vspace{0.3cm}

%Equivalently,
%\begin{align*}\varphi(\pi^*)&\leq \frac{1}{1-e^{rl}}\sum_{j=0}^{l-1}e^{-rj}\,f(w_j,w_{j+1})\\&=\varphi(W_1)
%\end{align*}where $W_1=(w_0,...,w_{l-1})$. We conclude that $(W_1,W_1,...)$ is optimal.Now, let $W=(w_0,...,w_{l'-1})$ but where $l'$ is such that $w_{l'}$ is the first time a string of prices is repeated i.e. there exists $l<l'$ such that $w_{l}=w_{l'}$. Note that by construction $W$ is simple. We set $W=(W_0,W_1)$ with $W_0=(w_0,..,w_{l-1})$ and $W_1=(w_{l},...,w_{l'-1}).$We can always write that  $$\varphi(\pi^*)-\sum_{j=0}^{l-1}e^{-rj}\,f(w_j,w_{j+1})=e^{-r l}\,\varphi(\bar\pi)$$ where $$\varphi(\bar\pi)=\sup_{\{\pi=(v_k:k\geq 0)~ : ~v_0=w_{l}\}}\varphi(\pi).$$The above calculations above show that $\varphi(\bar\pi)=\varphi(W_1)$. We conclude that $\pi^*=(W_0,W_1,W_1,...)$.~

\subsection{Extension to Strategic Customers}\label{sec: Strategic}
We end this section by addressing how the weakly coupled property of the revenue function is also satisfied when customers are strategic in the sense of forward looking.

The decision for a patient customer to purchase the product occurs as soon as the current valuation becomes larger than the current price. In the case of a strategic customer, this decision is more subtle and requires a specific model of strategic behavior. Strategic customers make their purchasing decision not only based on the current valuation and price, but also based on future committed prices as well as  possibly on the distribution of their future valuation (e.g., a customer is expecting a bonus, so the willingness-to-pay will be affected by the bonus amount). Independently of the specifics of the strategic behavior model, we find in the (BP) model that Theorems~\ref{thm: Main} and \ref{thm: Algo} remain valid for strategic customers. Every arriving customer has a bounded patience level $\tau$. So, if we consider a policy of the form, $\pi=\big((w_j:j\geq 1):w_j\in\Omega^{\tau}\big)$,  the customers who arrive between time $j\tau+1$ and $(j+1)\tau$ will generate revenues that are only function of $w_j$ and $w_{j+1}$  regardless of how their strategic behavior is modeled (as long as the stationarity assumption holds). Hence the weakly coupled property holds as in the case of strategic customers and the results of Theorem~\ref{thm: Main} and \ref{thm: Algo} apply.

We end this discussion with two examples of possible strategic models that could govern forward-looking customers.

%{\dgreen I suggest we stop here. Maybe we can include Examples 3 and 4 arguing that we want to give an idea of how strategic customers can be modeled within the changing valuation process. As for the affine property in the Markovian case we can just say "Moreover, if we specify to the (M-BP) case we get a similar affine behavior of the revenue function as in the impatient case, with this difference that}

%As for the (UPL) case, one would expect the results to hold as well, but the proofs would require to know more about the strategic model, and that is beyond the scope of the paper. For clarity, we give next two examples of how customers can be strategic. \medskip

%From the seller's end, it is optimal in the impatient case to select the prices from the valuation set $\Omega^*$. But in practice, it could well be that the set of prices from which the firm can pick from is given and that customers have valuations drawn from some known, say, continuous distribution. In this case, we let the prices divide the support of the distribution in $K+1$ buckets and associate each drawn valuation with its corresponding bucket. If the customer does not purchase at arrival, he enters the system and in the next period his valuation changes; it is enough to know to which bucket the new valuation belongs to, and that, can be modelled following a Markov chain with given transition matrix $Q$. The two approaches are equivalent. In what follows, we consider the case where the set of valuations is discrete exactly the same as the set of prices the firm selects from.

{\bf Example 3.}  Customers present in the system purchase in a specific period if the current surplus is larger than all possible surplus in the future calculated using the current valuation. Suppose a customer is currently present in the system  with valuation $v_k$ and could remain in the system for at most another $t$ periods. The customer observes the current and future committed prices in his remaining window say, $(k_s, k_{s+1}, ...,k_{s+t})$ and purchase the product at the current price, $k_s$, if $k-k_s\geq 0$ and $k_s\leq \min\{k_{s+1},...,k_{s+t}\}$.  The customer would delay a purchase if the product becomes cheaper in the ``future". In this example, the customer believes that his current valuation is the best prediction of her future valuation.\medskip

{\bf Example 4.} In this example, the customers of every given class have a common model for their stochastic valuation evolution process, call it $\tilde{\mathcal V}_{t}(\cdot)$ (it does not need to be the same as the firm's model).
Customers purchase in a specific period if the current surplus  is positive  and larger than the expected value of the maximum surplus that can be generated in the remaining periods of the patience level, that is if $k-k_s\geq 0$ and %Consider the same setting than the previous example. However, the customer now would purchase at the current price $k_s$,
$$k-k_s\geq \mathbb{E}\max_{s+1\leq j\leq s+t}\{\tilde{\mathcal{V}}_t(j)-k_{j}\}.$$

\section{Numerical Analysis}\label{sec: Numerics}

Our objective in this section is to use the algorithm of Theorem \ref{thm: Algo} to shed light on the behavior of optimal pricing strategies. We confine our analysis to the (M-BP) and (M-UP) models where we have a closed form of the revenue function. We use the algorithm to obtain and analyze an optimal cyclic policy
for many choices of the matrix $Q$, while also varying $\bTau$, the valuation vector ${\bf v}$ and its size $K$. Without much loss of generality we set $\boldsymbol\gamma=[1/K,\ldots,1/K]$.  The parameters we use in drawing different realizations of $Q$ and ${\bf v}$ follow:

\begin{itemize}
\item[i.)]The parameter $\nu>0$ represents the probability in any given period that a customer leaves the system by reaching the absorbing state $v_0$. Without a great loss of generality we often restrict, for simplicity of notations, our numerical analysis to the case where $\nu$ is the same for any given state of the customer. The coefficients of the matrix $Q$ can thus be drawn randomly in $(0,1)$ such that the sum on each row is equal to $1-\nu$.

\item[ii.)]The parameter $e>0$ is introduced to represent the average gap between consecutive valuations. That is, the vector ${\bf v}=[v_1,\ldots,v_K]$ is such that $v_1=1$ and $v_{i+1}=v_i\,(1+e \,\zeta_i)$ where $\zeta_i$ is uniformly drawn in $(0,1)$.
\end{itemize}

When the findings with respect to the (M-BP) and (M-UP) cases are the same, we present these under the (M-BP) setting. We later discuss the findings that are specific to the (M-UP) case. For now, we set $M=1$, or equivalently,  $\sigma=\tau$. For each set of parameters $(\bTau, \nu, e,K)$ we make $n\gg1$ drawings of the different parameters to which we apply the analytic algorithm. We collected and summarized the statistics of the outcomes in Table~1 below.
\begin{itemize}
\item We call $k$-cyclic policy a cyclic and simple $\tau$-policy of size $k\,\tau$ (i.e., with $k$ different prices).

\item We say that a cyclic policy $\pi$ outperforms $\pi'$ by $\delta \%$ if $100(\cR(\pi)-\cR(\pi'))/\cR(\pi')=\delta$.

\item Let $\mf$ be the percentage of times in which the maximal policy performs better than all the fixed-price policies. Let $\md$ denote the average ratio (in percent) of how much the maximal policy outperforms the fixed-price policy (this average is computed conditional on the fact that  the maximal policy is not a fixed-price).

\item Let $\mf'$ be the percentage of times in which the maximal policy performs better than all the fixed-price and two-cyclic policies. Let $\md'$ denote the average ratio (in percent) of how much the maximal policy outperforms the maximum over the  fixed-price and two-cyclic policies (this average is computed conditional to the fact that the maximal policy has period larger or equal to three).
\end{itemize}
\bigskip

\noindent {\bf Finding I. Optimal policies are almost always cyclic policies with a decreasing cycle of length at most three.}

Under (M-BP), each customer experiences no more than two prices; this is true with probability larger than $1-\varepsilon$ in the (M-UP) case. Hence, it is reasonable to expect the optimal cyclic policies given by Theorem~\ref{thm: Main} to have a short cycle size, even if $K$ is large. We know based on Proposition~\ref{prop: Example_basic_intro} that this cannot always be true. However, our numerical analysis actually yielded, in all the cases we considered, optimal policies that are monotone decreasing and $k$-cyclic with $k$ smaller or equal than $3$. Hence, our first finding is that for all values of the parameters and for all the instances we considered (as described above), the optimal policies were decreasing and $k$-cyclic with $k\leq3$.
\bigskip

\noindent {\bf Finding II. Fixed price and two-cyclic policies are practically optimal.}

Our second finding is that the fixed-price and two-cyclic policies were nearly optimal, with very few instances in which a three-cyclic policy outperformed them; and in those cases the performance ratio was less than $1 \%$. This is reflected in the two last columns of Table 1.
% fixed-price policies and period two-cyclic policies are almost optimal with very few cases where the optimal policy is of period three and in those rare events, it outperforms the best among the fixed-price and two-cyclic policies by at most $1 \%$.

\bigskip

\noindent {\bf Finding III. As $\sigma$ increases fixed-price policies perform increasingly better compared to two-cyclic policies.}

In line with our  result of Propositions \ref{prop: Affine_Reduction} we find that, as long as the probability to reach the state $v_0$ or $v_K$ from all the other valuations is supposed to be strictly positive, then as $\sigma$ gets very large the fixed-price policies become the best. This is because customers tend to see just one price as $\sigma$ goes to infinity. As shown by the fourth  column of Table 1, the frequency at which  two-cyclic policies outperform the fixed-price policies is decreasing with $\sigma$. As shown by the fifth column of Table 1, we see the outperforming rate of the two-cyclic policies $\md$ is also a decreasing function of $\sigma$. Finally, the last line of these two columns confirms our result of Proposition \ref{prop: Affine_Reduction}, namely that as $\sigma$ goes to infinity the optimal policy is a fixed-price.

\bigskip

\noindent {\bf Finding IV. As $\nu$ increases fixed-price policies perform increasingly better compared to two-cyclic policies.}

As the probability to reach the states $v_0$ or $v_K$ from all the other valuations increases, the fixed-price policies become the best.  This occurs because as latter probability grows, the customers spend less time in the system. If each customer stays in the system for just one period, then the price that optimizes the revenue of a single period (without consideration of $\Theta$) gives rise to an optimal policy  that is the fixed-price policy with this same price.
 As shown by the fourth  column of Table 1, the frequency at which  two-cyclic policies outperform the fixed-price policies is decreasing with $\nu$. As shown by the fifth column of the Table 1, we find the outperforming rate of the two-cyclic policies $\md$ is also a decreasing function of $\nu$.

\bigskip
\begin{center}
\begin{tabular}{|c|c|c|c|c|c|c|}
\hline
$\tau$ & $\nu$ & $e$ & $\mf$ & $\md$   & $\mf'$ & $\md'$  \\
\hline

1  & 0.1 & 0.1  & 76  &   2.6 & 5  &   0.1  \\
 5 & 0.1 & 0.1  &  43 & 0.6    &  2 &   0  \\
 9 & 0.1  & 0.1  &  21 &  0.3  & 0 &    0\\
 13 & 0.1  & 0.1  &  13 &   0 & 0  &   0  \\

\hline

5 & 0. & 0.1  &   67 &   3      &  5 & 0.3 \\
5 & 0.1 & 0.1  &  54   &  0.6 & 4 & 0.1 \\
5 & 0.2 & 0.1  &   16 &   0.1 & 0 & 0  \\
5 & 0.3 & 0.1  &   1  &    0  & 0 & 0 \\

\hline

5 & 0.1 & 0.05 & 45  & 0.3  & 2 &  0.1 \\
5 & 0.1 & 0.15  &  43  &  0.7 & 3 &  0.1\\
5 & 0.1 & 0.25  &  29  &   1.2  & 2 & 0.2\\
5 & 0.1 & 0.35  &  34  &   1.7  & 2 & 0.2\\
\hline

\end{tabular}

\bigskip

Table 1. \footnotesize{Comparing under an (M-BP) model, with $M=1$ and $K=4$, the optimal policy to optimal fixed-price policies and two-cyclic policies.}

\end{center}

\bigskip

\noindent {\bf Finding V.  Impact of disregarding changing valuations on revenues.}

We illustrate these findings with specific examples. First, consider the following. Set $K=4$ and $\bTau=\sigma=10$ under the (M-BP) case and set the transition matrix $Q$ close to the Identity,

\[
Q= \begin{bmatrix}
  0.9& 0.05& 0.03& 0.02 \\
  0.05& 0.8& 0.1& 0.05\\
  0& 0.05& 0.95& 0\\
  0& 0& 0& 1
 \end{bmatrix}.
\]
%Let $Q = [[0.9, 0.05, 0.03, 0.02] [0.05, 0.8, 0.1, 0.05], [0, 0.05, 0.95, 0], [0, 0, 0, 1]]$
Note that $\nu(Q)=1$ as is the case for the Identity matrix. Let  ${\bf v}= [1.0, 1.3, 1.45, 1.6]$.
For this valuation vector and transition matrix $Q$, an optimal policy is the two-cyclic policy $(3,2)$. If however the matrix is the Identity (i.e., the valuations are not changing), an optimal policy is given by $(2,1)$.  Of course, if the seller disregards the impact of the changing valuations, and thus solves for an optimal policy using $Q'={\rm Id}$, she will end up setting the two-cyclic policy $(2,1)$ instead of $(3,2)$. The loss in profit of doing so is found numerically to be $4 \%$. This result is quite robust. Indeed, the policy $(2,1)$ is numerically found to be optimal for all diagonal matrices that are close to the ${\rm Id}$. Thus, the seller would incur the same loss of $4 \%$ if he replaces $Q$ by any diagonal matrix close to the Identity. %Such losses are typically obtained when one replaces the transition matrix $Q$ by the identity matrix.

Now, let ${\bf v} = [1.0, 1.1, 1.25, 1.4]$
and
\[
Q= \begin{bmatrix}
  0.7& 0.1& 0.05& 0.07 \\
  0.05& 0.7& 0.07& 0.07\\
  0.07& 0.07& 0.7& 0\\
  0& 0& 0.05& 0.8
 \end{bmatrix}.
\]

%$Q= [[0.7, 0.1, 0.05, 0.07], [0.05, 0.7, 0.07, 0.07], [0.07, 0.07, 0.7, 0], [0, 0, 0.05, 0.8]].$
For any transition matrix that is diagonal, $Q'=Diag(d_1,d_2,d_3,d_4)$ where $d_i \in [0.6,1]$, we find that the fixed-price policy $(1)$ is optimal. On the other hand, when we consider $Q$ to be the transition matrix, the policy  $(3,2,1)$ is optimal  and outperforms $(1)$ by $5 \%$.
%$\bullet$ Let ${\bf v}= [1.0, 1.3, 1.45, 1.6]$.
%Let \begin{align*} Q_1&= [[0.6, 0.1, 0.1, 0.1], [0.1, 0.7, 0., 0.], [0.1, 0.1, 0.6, 0.1], [0., 0.1, 0.1, 0.7]] \\
%Q_2&= [[0.6, 0.1, 0.07, 0.07], [0.07, 0.67, 0.07, 0.09], [0.09, 0.08, 0.61, 0.12], [0.08, 0.12, 0.05, 0.65]] \end{align*}
%Then the maximum for $({\bf v},Q_1)$ is reached by the policy $[2]$, while for $({\bf v},Q_2)$ the maximum is reached by the policy $[3,2]$ and equals $1.17$ while the policy $[2]$ gives for $({\bf v},Q_2)$ $1.15$ which is $2\%$ less.
\bigskip

\noindent {\bf Finding VI.  An example of three-cyclic optimal policy. }

In the case of $\bTau=M \sigma$, $M \geq 2$, one expects that optimal policies with longer cycles appear.
However each customer sees $M+1$ prices in this context and it is reasonable to conjecture that the length of the cycles will be small if $M$ is taken to be small. Because of the complexity $K^{4M}$ we are bound to let $M$ be small if we want to apply our algorithm.

In all the examples we tested ($M\leq 4$) the optimal policies were decreasing and had a cyclic length no more than $3$. However, as opposed to the case $M=1$, we often experienced cases where a three-cyclic policy outperforms both two-cyclic and the fixed-price policies by more than $5 \%$. We illustrate this finding with the following example.

Consider the following (M-BP) example with $K=4$, $\bTau=6$ and $\sigma=2$, that is $M=3$. We set ${\bf v}=[1.0, 1.2, 1.4, 2]$ and $\boldsymbol\gamma=[0.25, 0.25, 0.25, 0.25]$. We let
\[
Q= \begin{bmatrix}
  0.8& 0.05& 0.0& 0 \\
  0.2& 0.6& 0.0& 0.2\\
  0.1& 0.2& 0.6& 0\\
  0.1& 0.2& 0.6& 0
 \end{bmatrix}.
\]
%We let $Q= [[0.8, 0.05, 0.0, 0], [0.2, 0.6, 0.0, 0.2], [0.1, 0.2, 0.6, 0], [0.0, 0.0, 0.0, 0.6]]$

\noindent The maximum fixed-price policy  is given by $(1)$ with $\cR((1))=1$. The optimal two-cyclic policy is given by $(3,1)$ with $\cR((3,1))=1.09$. Finally, an optimal policy is a three-cyclic policy and is  given by  $(4,3,1)$ with  $\cR((4,3,1))=1.14$. It is an example where an optimal policy outperforms the fixed-price policies by $14\%$ and the two-cyclic ones by $4 \%$.

\bigskip
\noindent {\bf Finding VII. Tuning $\varepsilon$ in the (M-UP) case. }

Consider the (M-UP) case. We fix $K,Q,{\bf v},$ and $\boldsymbol\gamma$. One could select $\varepsilon$ as a way to obtain the best possible solution of problem (\ref{Eq: General_Main_Form}).
For every $\varepsilon>0$ we define $\tau_\varepsilon=-\ln \varepsilon / \nu$ and we set  $\bTau=\tau_\varepsilon$ and take $\sigma=\bTau$ or equivalently $M=1$ (a similar reasoning below can be made for any $M$).

We denote by  $\pi_\varepsilon$ an optimal policy given by the algorithm that uses the analytic approximation of the revenue by a $\bTau$-affine revenue function. We denote by $\bcR(\pi_\varepsilon)$ the approximate return of an optimal policy as given by the algorithm. Given that the error of the affine approximation is bounded by $\varepsilon$ we may introduce the quantity $\bcR(\pi_\varepsilon)-\varepsilon$ and focus on the choices of $\varepsilon$ that make it positive.

% $\bcR^{{\sf \tiny\mbox {LB}}}_\varepsilon:=
%Of course, in the case of fixed-price policies we have that $\cR=\bcR$.
For example, to compare the performance of  $\pi_\varepsilon$ versus the fixed-price policies, we can introduce the notation $\Delta_\varepsilon=\bcR(\pi_\varepsilon)-\varepsilon - \cR^*_f$ where $\cR^*_f$ denotes the revenue of the best fixed-price policy. Typically, as $\varepsilon \to 0$, fixed-price policies are optimal as confirmed by the (M-UP) version of Proposition~\ref{prop: Affine_Reduction} (included in Proposition~\ref{optimal_results}) and the algorithm (see Table 2). As $\varepsilon$ becomes large, we often find two-cyclic optimal policies $\pi_\varepsilon$. However as $\varepsilon$ becomes too large, $\Delta_\varepsilon$ becomes negative which implies that any optimal  policy we find is not guaranteed to perform better than the fixed-price ones, once we account for the error $\varepsilon$ of our approximation. In conclusion, as we vary $\varepsilon>0$, we aim to maximize $\bcR(\pi_\varepsilon)-\varepsilon$ and track $\Delta_\varepsilon$ as a way to compare the approximation with fixed-price policies.

We illustrate this approach using the following example. We let ${\bf v}=[1.0, 1.2, 1.5, 2]$ and
\[
Q= \begin{bmatrix}
  0.65& 0.05& 0& 0.05 \\
  0.05& 0.65& 0& 0.05\\
  0.05& 0.05& 0.6& 0\\
  0& 0& 0& 0.6
 \end{bmatrix}.
\]

%$Q= [[0.65, 0.05, 0, 0.05], [0.05, 0.65, 0, 0.05], [0.05, 0.05, 0.6, 0], [0, 0, 0, 0.6]]$.
Note that $\nu(Q)=0.3$. From Table~2, we observe that there is a broad range of values of $\varepsilon$, where a two-cyclic policy is strictly better than fixed-price policies. In particular, by setting $\varepsilon=0.01$ the algorithm generates a policy that outperforms the best fixed-price policy by more than $3\%$, and represents our best approximation of the unconstrained optimization.

\bigskip
\begin{center}
\begin{tabular}{|c|c|c|c|c|c|c|}
\hline
$\varepsilon$ & $\tau_\varepsilon$ & $\pi_\varepsilon$ & $\bcR(\pi_\varepsilon)$ & $\Delta_\varepsilon $  \\
\hline
$10^{-13}  $& 83  & (4)  & 1.02 &  0    \\
 $10^{-5} $& 32 & (4,1)  &   1.03 &  0.02     \\
 $10^{-3} $ & 19  & (4,1)&  1.04  &   0.04    \\
 $10^{-2} $ & 12  & (4,1)&  1.05  &    0.05   \\
 $10^{-1}$  & 6 & (4,1)&  0.99  &    0   \\
 0.2  & 4  & (4,1)&  0.91  &    -0.1   \\
 0.4  & 2 & (4,1)&  0.73  &    -0.27   \\
\hline
\end{tabular}

\bigskip

Table 2. \footnotesize{Comparing under an (M-UP) model with $M=1$ and fix $(K,Q,{\bf v},\boldsymbol\gamma)$,  for different values of $\varepsilon$, the performance of the near optimal policy, $\pi_\varepsilon$, to the optimal fixed-price policy.}

\end{center}

\section{Conclusion}\label{sec: Conclusion}
 We addressed an intertemporal pricing problem that incorporates what we find to be a natural consumer's behavior; namely, customers with time-varying valuations. We considered a continuous flux of patient customers with random but bounded patience levels during which their valuation processes stochastically change through time. The model depicting the valuation process is extremely general and requires only time stationarity.  We show, with some explicit numerical examples, that disregarding stochastic valuations may lead to  firms missing out on sizable profits. As for the analytical analysis, and despite the generality of the model, the additive revenue function is shown in this context to preserve the weakly coupled property, which guarantees that cyclic policies are optimal. We thus established that the optimality of cyclic policies in the context of intertemporal pricing is quite robust.

The time-varying valuations feature of the model presents a major complexity with respect to the structure and the tractability of these policies. Concerning the latter, one can obtain well-behaved and numerically tractable policies by limiting  the pricing pace, i.e., by forcing the firm to keep each price for a minimum amount of time. As for the structure, previous work (see, \cite{LiuCooper15}) has highlighted the complexity of finding the optimal policy even for constant valuations as soon as one considers multiple patience levels. In the latter setting, \cite{Lobel17} shows that it can remain tractable. However with general time-varying valuations, we find that it is not possible to reduce the exponential complexity of the optimal policy. For that, we constructed an example that shows that even when customers spend no more than two periods in the system ($M=1$), an optimal policy can be cyclic increasing and the size of the cycle can be as large as $K-1$. Interestingly however, we observed numerically that in the same setting of the example ($M=1$), when we randomly select the primitives of the problem, it is rare for an optimal cyclic policy to have a cycle size strictly larger than two (and hence, independent of $K$),  or not monotone \textit{decreasing}. %that has conjectured that under constant valuations and fixed patience levels the cycle size of an optimal policy does not depend on the number of possible prices that the firm can set, $K$. As it is probably expected, we refute this conclusion under a time-varying valuations setting. Indeed, e

In terms of potential next steps, we believe that the last observation is worth investigating carefully by looking for conditions under which the cycle size is independent of the number of prices. This problem is complex, but a better understanding of this possible independence in $K$ would represent a valuable breakthrough.   It will be also valuable to look more carefully into the strategic customer case and select the best way to model such behavior. However, unlike what was observed in \cite{LiuCooper15} and \cite{Lobel17}, we do not see how in the context of changing valuation, the strategic customer case would be easier to handle than the patient case. Finally, most of the closely related literature have been looking at a single product. It could be an interesting challenge to look at substitutable products and see whether one could identify the structure of optimal policies under such a more complex setting.

%It will be also valuable to look more carefully into the strategic customer case and select the best way to model such behavior in the context of changing valuation.  .
% It has been argued (see \cite{LiuCooper15} and \cite{Lobel17}) that the strategic case is easier to handle than the patient case. We do not believe that this will still be the case in the context of changing valuation, however, one can build on the weakly-coupled property for the revenue function that we showed holds also for strategic customers.}
%Finally, most of the closely related literature has been looking at a single product. It is interesting to look at substitutable products and see whether one could identify the structure of optimal policies under such more complex setting.

\theendnotes

\break\section*{Appendix A.}\label{Appen_A}
\setcounter{section}{0}

%\section{Proofs of the Main Results}\label{sec: Proofs}

\section{Markovian Valuations - Formulation of the Revenue Pair}\label{sec: Markov Calc}
This section is devoted to some preliminary analysis and calculations when the valuation process follows a discrete time Markov chain on $\Omega$ with an absorbing state at $v_0=0$; hence its transition matrix can be written as follows
\[ \bar Q=\left( \begin{array}{ccc}
1 & 0 \\
H & Q \\
\end{array} \right),\]
where $H$ is a column vector with dimension $K$, which entries are the probabilities to go from any state $k>0$ to $v_0$. We also assume, without loss of generality, that one customer arrives every period.

Our objective is to characterize how the expected number of customers that are in the system evolves through time and we also obtain a closed form formulation of the revenue function  $(L,\boldsymbol\Theta)$.
\bigskip %Also, in our effort to standardize the different quantities defined above, we assume w.l.o.g. that $\sum_{j=1}^K\gamma_j\,v_j=1.$
%It is also worth noting, that if we express ta pricing policy $\pi=\big((w_n:n\geq1):w_n\in\N_{\tilde K}\big)$, then a similar definition of a revenue function  $(L,\boldsymbol\Theta)$ would follow by simply replacing $\cM$ by $\tilde\cM=\N_{\tilde K}\times[0,\rho]$.

We start by introducing new notations. We set the following
\begin{itemize}
\item[$\bullet$]$P_{i,j^+}=\sum_{l=j}^{K} q_{il}$. The probability to transfer from state $i$ to any possible state $l\geq j$.
\item[$\bullet$]$P_{i^-,j^+}=\big(\sum_{l=j}^{K} q_{1l},\sum_{l=j}^{K} q_{2l}, \ldots, \sum_{l=j}^{K} q_{(i-1)l}\big)^T$. The column vector whose entries are the probabilities to go from a state strictly lower than $i$ to all possible states higher than $j$.
\item[$\bullet$]$U_{i,j^-}=\big(q_{i1},\ldots,q_{i(j-1)}\big)$. The row vector whose entries are the probabilities to go from state $i$ to a specific state strictly lower than $j$
\item[$\bullet$]$Q_{i^-j^-}= \{q_{kl}:1\leq k<i,1\leq l<j\}$ is the matrix of transfer probabilities from states lower than $i$ to states lower than $j$. It is a minor of the matrix $\bar Q$. We denote by $Q_m$ the square matrix $Q_{m^-m^-}$
\end{itemize}
Note first that $``-"$ involves states strictly lower while $``+"$ involves states larger or equal; secondly, all the above quantities involve transition probabilities between states in $\Omega$ and do not involve the absorbing state $v_0$.

\bigskip

\noindent {\bf Expected Number of Customers in the System.}

\noindent We consider $\boldsymbol\theta=(\theta_1,...,\theta_K)\in[0,\tau]^K$, where $\theta_m$ is again the expected number of individuals with valuation $v_m$ who are currently in the system.
Given a phase $(k,\tauu)$, an expected state of the system, $\boldsymbol\theta$, we recall that $\boldsymbol\Theta(k,\tauu|\boldsymbol\,\theta)=\big(\Theta_{1}(k,\tauu|\boldsymbol\theta),...,\Theta_{K}(k,\tauu|\,\boldsymbol\theta)\big)$, is the expected state of the system one phase later. Therefore, if $m\geq k$ then $\Theta_{m}(k,\tau|\,\boldsymbol\theta)\equiv 0,$ otherwise,

\begin{equation}\Theta_{m}(k,\tauu|\,{\boldsymbol\theta})=  \left( \sum_{l=1}^{k-1}\theta_l\, U_{l,k^-} Q_{k}^\tauu+ \sum_{i=0}^{\tauu\wedge\tau-1}\sum_{l=1}^{k-1} \gamma_l \, U_{l,k^-} Q_k^{i}\right)_{m} + \gamma_{m}\label{Eq: Theta_m}
\end{equation}
We recognize that for $\tauu\geq \tau$ and for $m<k$, we have that
\begin{equation}
\Theta_m(k,\tauu)=\bar{\Theta}_{m}(k):=\left( \sum_{i=0}^{\tau-1}\sum_{l=1}^{k-1} \gamma_l \, U_{l,k^-} Q_k^{i}\right)_{m} + \gamma_{m},\label{Eq_Theta_bar}
\end{equation}
and $\Theta_m(k,\tauu)=\bar{\Theta}_{m}(k)\equiv0$, when $m\geq k$.

\bigskip

\noindent {\bf Expected Revenues.}

\noindent We denote by $\cL_{(k,\tauu)}(m)$ the expected revenue generated in the next $\tauu$ periods by one individual currently present in the system with valuation $v_m$, facing in the next $\tauu$ periods the price $v_k$. It is possible that the individual does not purchase during these $\tauu$ periods; any revenues generated after $\tauu$ are not included in $\cL_{(k,\tauu)}(m)$. Therefore,
\begin{equation}
\begin{aligned}
\cL_{(k,\tauu)}(m)
&=\big[P_{m,k^+} + U_{m,k^-} P_{k^-,k^+} +  U_{m,k^-} Q_{k} P_{k^-,k^+} + U_{m,k^-} Q_{k}\, Q_{k}\,P_{k^-,k^+} +\ldots \big]\cdot v_k\\
    &=\big[P_{m,k^+} + \sum_{l=0}^{\tauu\wedge\tau-2}U_{m,k^-}\,Q_{k}^l \,P_{k^-,k^+}\big]\cdot v_k
\end{aligned}\end{equation}
%Given that $||Q_k||:=\Lambda_k<1$ for $1\leq k\leq K$, the previous formulation shows that if the duration $\tauu$ of the phase is large enough, only the ``first" terms of the sum would probably matter.
%More generally, if a customer is currently present in the system and has a valuation $v_m$ and is facing the set of prices $v_{k_1}, v_{k_2},...,v_{k_n}$ each set consecutively for periods %$\tauu_1,\tauu_2,...,\tauu_n$, then the expected revenues generated from this customer is given by
%\begin{equation}
%\begin{aligned}
%\cL_{(k_1,k_2,..,k_n:\tauu_1,\tauu_2,...,\tauu_n)}(m)
%= \big[&P_{m,k_1^+} + \sum_{l=0}^{\tauu_1-2}U_{m,k_1^-}\,Q_{k_1}^l \,P_{k_1^-,k_1^+}\big]\cdot v_{k_1}\\ %&+\big[U_{m,k_1^-}\,Q_{k_1}^{\tauu_1-1}P_{k_1^-,k_2^+}+\sum_{l=0}^{\tauu_2-2}Q_{k_1}^{\tauu_1-1}\,Q_{k_1,k_2}\, Q_{k_2}^l \,P_{k_2^-,k_2^+}\big]\cdot v_{k_2}\\
%&+...
%\end{aligned}\end{equation}
We also denote by $\bar{\cL}_{(k,\tauu)}(m)$ the expected revenue generated from one individual who just arrived into the system with valuation $v_m$, and who will be facing the phase $(k,\tauu)$. Therefore,
\begin{equation*}
\bar{\cL}_{(k,\tauu)}(m)= \delta_{m\geq k}  v_{k} + (1-\delta_{m\geq k}) \cL_{(k,\tauu)}(m).
\end{equation*} The first term depicts the expected revenue generated if at arrival the customer has a valuation larger or equal than the listed price $k$. Otherwise, the customer now is in the system and the expected revenue generated from the remaining period are given by $\mathcal{L}$ defined above.

Putting together some of these formulations we obtain a closed formulation of the expected revenue $L$.
\begin{prop}\label{prop: Closed form Revenues}
Given a transition matrix, $Q$, a vector $\boldsymbol\gamma$ of initial proportions and a set $\Omega$ of valuations, the expected revenues generated from a set of consecutive phases $\big((k_j,\tauu_j):1\leq j\leq n)\big)$, is given by
\begin{equation}
\begin{aligned}
L(k_1,...,k_n:\tauu_1,...,\tauu_n)&=\sum_{j=1}^n L(k_j,\tauu_j|\,\bft^{j-1})\\
&=\sum_{j=1}^n\sum_{m=1}^K \big[\gamma_m\,\sum_{\tauu=1}^{\tauu_j} \bar{\cL}_{(k_j,\tauu)}(m)+\theta^{j-1}_m\,\cL_{(k_{j},\tauu_{j})}(m)\big]
\end{aligned}
\end{equation}
with $\theta_m^j=\Theta_{m}(k_j,\tauu_j|\,{\boldsymbol\theta^{j-1}})$ is given in closed form in (\ref{Eq: Theta_m}) and $\bft^0=\bf{0}.$
\end{prop}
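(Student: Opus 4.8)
The plan is to prove the identity by a direct accounting of expected revenue, organized first by phase and then, within each phase, according to the origin of the customer who generates the revenue. The only probabilistic inputs I need are linearity of expectation---to pass from realized to expected revenue and to split sums over individual customers---together with the Markov and stationarity assumptions on $(\boldsymbol\gamma,Q)$, which guarantee that a customer's expected contribution depends only on her current valuation and on the prices she will face, and not on her past trajectory or her arrival epoch. Since the closed-form expressions for $\cL_{(k,\tauu)}(m)$, $\bar{\cL}_{(k,\tauu)}(m)$ and for $\Theta_m(k,\tauu|\,\boldsymbol\theta)$ in (\ref{Eq: Theta_m}) have already been derived, once the decomposition is in place the closed-form assertion follows by substitution.

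First I would establish additivity across phases. Fixing a realization of the arrival and transition processes, every unit of revenue is earned in exactly one period, hence in exactly one phase, so the realized total revenue over the $n$ phases equals the sum of the realized revenues earned during each phase $j$. Taking expectations and invoking Proposition~\ref{Eq: Main Objective} gives $L=\sum_{j=1}^n L(k_j,\tauu_j|\,\bft^{j-1})$, where $\bft^{j-1}$ is the expected state at the start of phase $j$, obtained recursively through $\theta_m^{j}=\Theta_m(k_j,\tauu_j|\,\boldsymbol\theta^{j-1})$ with $\bft^0=\mathbf{0}$.

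Next I would decompose the revenue earned during a single phase $(k_j,\tauu_j)$ according to whether the purchasing customer was already present at the beginning of the phase or arrived during it. For the incumbents, the expected number carrying valuation $v_m$ is by definition $\theta_m^{j-1}$; by the Markov property each such customer generates, over the $\tauu_j$ periods of the phase, an expected revenue $\cL_{(k_j,\tauu_j)}(m)$ that is independent of how she reached state $v_m$, so their total expected contribution is $\sum_{m=1}^K \theta_m^{j-1}\,\cL_{(k_j,\tauu_j)}(m)$. For the arrivals, exactly one customer enters at each of the $\tauu_j$ periods, drawing valuation $v_m$ with probability $\gamma_m$; a customer entering when $\tauu$ periods of the phase remain faces the truncated phase $(k_j,\tauu)$ and thus contributes $\bar{\cL}_{(k_j,\tauu)}(m)$ in expectation. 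The arrival epochs are in bijection with $\tauu\in\{1,\ldots,\tauu_j\}$ (the first arrival sees $\tauu_j$ remaining periods, the last sees one), so summing over $\tauu$ and over $m$ gives the arrival contribution $\sum_{m=1}^K \gamma_m\sum_{\tauu=1}^{\tauu_j}\bar{\cL}_{(k_j,\tauu)}(m)$. Adding the two contributions yields the bracketed expression for $L(k_j,\tauu_j|\,\bft^{j-1})$, and substituting the earlier closed forms for $\cL$, $\bar{\cL}$ and $\Theta_m$ completes the statement.

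The point that requires care---and the one I would verify explicitly---is that this split neither double counts nor omits any revenue. Both $\cL_{(k_j,\tauu_j)}(m)$ and $\bar{\cL}_{(k_j,\tauu)}(m)$ are, by construction, truncated to the periods lying inside the current phase: revenue that an incumbent or arriving customer would generate after the phase ends is deliberately excluded. Such post-phase revenue is instead accounted for in later phases, precisely because the surviving (non-purchasing, non-absorbed) customers are carried forward into $\bft^{j}=\boldsymbol\Theta(k_j,\tauu_j|\,\boldsymbol\theta^{j-1})$ and re-enter the incumbent term of phase $j+1$. The hard part is thus checking that the survivors described by the closed form (\ref{Eq: Theta_m}) are exactly those whose unrealized revenue was excluded from $\cL$ and $\bar{\cL}$; given the definitions already in place, this reduces to matching the truncation index $\tauu\wedge\tau$ appearing in the revenue formulas against the same index appearing in (\ref{Eq: Theta_m}), which makes the two halves of the accounting consistent.
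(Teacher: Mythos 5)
Your proposal is correct and matches the paper's own (largely implicit) argument: the paper derives the same phase-by-phase accounting, splitting each phase's expected revenue into the incumbent contribution $\sum_m \theta_m^{j-1}\cL_{(k_j,\tauu_j)}(m)$ and the arrival contribution $\sum_m \gamma_m \sum_{\tauu=1}^{\tauu_j}\bar{\cL}_{(k_j,\tauu)}(m)$, with survivors carried forward through $\boldsymbol\Theta$ exactly as you describe. Your explicit verification of the truncation consistency (no double counting between $\cL$, $\bar{\cL}$ and (\ref{Eq: Theta_m})) is a point the paper leaves to the reader, but it is the same decomposition, not a different route.
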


We end this section by a result that is relevant primarily in the unbounded case.

\begin{prop}\label{prop: Bounded_Theta}
Suppose that the system starts empty. Then, for any policy $\pi\in\cP$, and any $1\leq m\leq K$, the expected number of customers in the system with valuation $v_m$ is bounded by $\rho=\min\{\tau,1/\nu\}$.
\end{prop}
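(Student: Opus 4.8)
The plan is to bound $\theta_m^j$ uniformly in $m$, in the phase index $j$, and in the policy $\pi$, by controlling the \emph{total} expected population of the system. Since every coordinate $\theta_{m'}^j$ is non-negative, it suffices to bound $\sum_{m'=1}^K \theta_{m'}^j$. The first observation I would make is that this total is dominated by a quantity that does not depend on the policy at all: a customer leaves the system either by purchasing, by reaching the absorbing valuation $v_0$, or by exhausting the patience window $\tau$, and the price path can only trigger \emph{additional} purchases. Consequently, the expected number of customers present at any time is at most the expected number of customers that have arrived, whose valuation process has not yet been absorbed, and whose age is below $\tau$ --- a quantity governed solely by $\boldsymbol\gamma$ and $Q$.

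Since the system starts empty and exactly one customer arrives per period, I would write this dominating quantity, at any time $T$, as a sum over arrival cohorts indexed by their age $s\ge 0$. A single arrival has initial valuation distributed according to $\boldsymbol\gamma$ and evolves by the substochastic kernel $Q$, so the expected mass of that cohort still occupying a non-absorbing state after $s$ steps is exactly $\boldsymbol\gamma Q^{s}\mathbf 1^{T}$, where $\mathbf 1$ is the all-ones row vector of length $K$. Summing over cohorts yields $\sum_{s\ge 0}\boldsymbol\gamma Q^{s}\mathbf 1^{T}$ as an upper bound for $\sum_{m'}\theta_{m'}^j$, hence for each $\theta_m^j$.

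Two elementary estimates then produce the two terms of $\rho=\min\{\tau,1/\nu\}$. For the $1/\nu$ bound I would invoke condition~\eqref{gap}: since $\|Q\|=1-\nu$ is the maximum absolute row sum and $Q\ge 0$, induction gives that every row sum of $Q^{s}$ is at most $(1-\nu)^{s}$; as these row sums are precisely the entries of $Q^{s}\mathbf 1^{T}$ and $\boldsymbol\gamma$ is a probability vector, $\boldsymbol\gamma Q^{s}\mathbf 1^{T}\le(1-\nu)^{s}$, and the geometric series sums to $\sum_{s\ge 0}(1-\nu)^{s}=1/\nu$. For the $\tau$ bound I would use that the patience window forces the valuation to vanish once $s\ge\tau$, so only the $\tau$ most recent cohorts contribute, each with total mass at most $\|\boldsymbol\gamma\|_1=1$, giving the bound $\tau$. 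Taking the smaller of the two yields $\theta_m^j\le\rho$. The degenerate case $\nu=0$ (e.g.\ $Q$ stochastic on the non-absorbing states) is automatically handled, since then $1/\nu=\infty$ and the minimum selects the finite patience bound $\tau$.

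The computation is short, so the only genuine obstacle is conceptual rather than arithmetic: making precise, uniformly in the arbitrary (possibly non-stationary) policy $\pi$ and in $j$, the domination of the true population by the policy-free non-absorbed count. I would establish this by a pathwise coupling --- on every realization of the arrival and transition processes, the set of customers present under $\pi$ is a subset of the set of non-absorbed customers of age strictly less than $\tau$ --- after which taking expectations and using linearity delivers the stated inequality without any appeal to the structure of $\pi$.
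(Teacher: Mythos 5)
Your proof is correct, and its core mechanism is the same as the paper's: dominate the population by a policy-independent quantity and then sum a geometric series using the row-sum gap $\|Q\|=1-\nu$. But the domination you use is genuinely different. The paper compares any policy to the \emph{constant highest-price} policy: since every price is at most $v_K$, each customer purchases (hence exits) under an arbitrary policy no later than under the fixed-price-$v_K$ policy, and the paper then plugs $k=K$, $\boldsymbol\theta^0=\mathbf{0}$ into the closed-form expression for $\boldsymbol\Theta$ (its equation for $\Theta_m$, involving the minors $U_{l,K^-}$ and $Q_K$) to bound each coordinate by $\Lambda\sum_{i}\Lambda^i+1\le 1/(1-\Lambda)=1/\nu$. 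You instead dominate by the \emph{no-purchase} count (arrived, not absorbed, age below $\tau$), which is an even cruder but pathwise-obvious upper bound, and you evaluate it directly as a cohort sum $\sum_{s\ge 0}\boldsymbol\gamma Q^s\mathbf 1^{T}$ with the full substochastic matrix $Q$. What your route buys is self-containedness and robustness: the coupling ``removing purchases only enlarges the population'' needs no argument about which policy maximizes occupancy coordinate-wise, and you never need the closed-form revenue-pair formulas. What the paper's route buys is economy in context: the closed form for $\boldsymbol\Theta$ is already derived in the same appendix, so the bound falls out of a formula that is reused elsewhere. Both arguments deliver the same two estimates ($\tau$ from the patience window, $1/\nu$ from the geometric series) and hence $\rho=\min\{\tau,1/\nu\}$.
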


The result is obvious in the (M-BP) case. For the (M-UP), it is enough to prove that for any policy $\pi=\big((k_j,\tau_j): j\geq 1\big)\in\cP$, we have that for all $j\geq 1$ and $1\leq m\leq K$, $$\theta_m^j=\boldsymbol \Theta_m(k_j,\tau_j|\,\bft^{j-1}) \leq \rho<\infty.$$ We start with two observations. First, the probability that a customer with valuation $v_m$ jumps next to $v_0$ is upper bounded by $\Lambda:=1-\nu$. Second, the expected number of customers with valuation $v_m$ at time $t$, is always less than the expected number of customers with valuation $v_m$ at time $t$, that would have been in the system if the pricing policy was set at $K$ throughout. Recall, that $\boldsymbol\Theta_K=0$ and $\bft^0=\bf{0}$, and thus from Equation~\ref{Eq: Theta_m} we have that
\begin{align*}
||\bfT(K,t)||&\leq ||\sum_{i=0}^{t-1}\sum_{l=1}^{K-1} \gamma_l \, U_{l,K^-} Q_K^{i}|| + ||(\gamma_{1},\gamma_{2},...,\gamma_{K-1},0)||\\
&\leq \sum_{i=0}^{t-1}\sum_{l=1}^{K-1} \gamma_l\,||U_{l,K^-}||\, ||Q_K||^{i}+1\\
&\leq \Lambda\,\sum_{i=0}^{t-1}\Lambda^{i}+1\leq \frac{1}{1-\Lambda}.
\end{align*} The third inequality is due to the fact that $\sum_{l=1}^{K-1} \gamma_l\,||U_{l,K^-}||\leq \Lambda \sum_l\gamma_l\leq \Lambda.$ The bound above shows that each component of $\bfT$ is bounded by $1/(1-\Lambda)$. Hence, the expected number of customers in the system is bounded by $K/(1-\Lambda)$. %(If $Q$ was doubly stochastic then the upper bound, $\rho$, can be reduced to $1/(1-\Lambda).$)
$\square$\\

\section{Proof of Propositions~\ref{prop:Eps_Affine}}
\subsection{Additional Notations}
We introduce some additional notations. Recall that  $\cL_{(k,\tauu)}(m)$ (resp. $\bar{\cL}_{(k,\tauu)}(m)$) is the expected revenues generated in the next $\tauu$ periods from one individual who is already in the system (resp. just arrived to the system) facing the price $v_k$ for $\tauu$ consecutive periods. Given a certain duration $\tau$, we denote by
\begin{itemize}
\item $C_{k}(m)=\cL_{(k,\tau)}(m)$, is the expected revenues generated during the phase $(k,\tau)$ by an individual initially in state $m$,
\item $B_{k}(m)=\bar{\cL}_{(k,\tau)}(m)$, is the expected revenues generated during the phase $(k,\tau)$ by an individual arriving with valuation $v_m$,
\item $A_k(m)= \sum_{\tauu=1}^{\tau} \bar{\cL}_{(k,\tauu)}(m)$
\item $A_k:= \sum_{m} \gamma_m A_k(m)$, is the aggregate expected revenues generated during the phase, $(k,\tau)$, by all the customers that arrived during this same phase,
\item $B_k:= \sum_{m} \gamma_m B_k(m)$, is the aggregate expected revenues generated during the phase, $(k,\tau)$, by all the customers that newly arrived at the beginning of this phase,
\item $T_{k,k'}:= \sum_{m} \bar\Theta_m(k)C_{k'}(m)$, is the expected revenues generated by ${\bf \bar\Theta}(k)$ during a phase $(k',\tau)$, where, ${\bf \bar\Theta}$ is given by (\ref{Eq_Theta_bar})
\item $ p_{k,\tauu,m}$ is the probability that a customer who faces a phase $(k,\tauu)$ remains in the system at the end of the phase.
\end{itemize}
\subsection{The proof}
Next we detail the proof of Proposition~\ref{prop:Eps_Affine}. Property (A1) holds as a direct result of $\tau<\infty$. As for (A2), we start with the quantity $L(k,\tauu|\,\boldsymbol\theta)$ for $\tauu\geq \tau$ and that can be written as the sum of three terms. First, the expected revenues generated during the phase from new customers in the last $\tau$ periods of the phase. That is exactly $A_{k}$. Secondly, the expected revenues generated from new comers that arrived in the first $\tauu-\tau$ periods of the phase and that is exactly, $\sum_m\sum_{s=\tau+1}^{\tauu}\gamma_m\,\bar{\mathcal{L}}_{(k,s)}(m)$. Finally, the revenues generated from customers that were in the system at the beginning of the phase, $\boldsymbol\theta$, and that is $\sum_m\theta_m\,\mathcal{L}_{k,\tauu}(m).$ We write that
\begin{align*}
L(k,\tauu|\boldsymbol\theta)&=A_{k}+\sum_m\sum_{s=\tau+1}^{\tauu}\gamma_m\,\bar{\mathcal{L}}_{(k,s)}(m)+\sum_m\theta_m\,\mathcal{L}_{k,\tauu}(m)\\
&= A_{k}+\sum_m\sum_{s=\tau+1}^{\tauu}\gamma_m(B_{k}(m)+p_{k,\tau,m}\,v_0)+\sum_m\theta_{m}
\,C_{k}(m)+\sum_m\theta_m\,p_{k,\tau,m}\,v_0\\
&= A_{k}+B_{k}\,(\tauu-\tau)+\sum_m\theta_{m}
\,C_{k}(m).
\end{align*}
By setting $\bold C_k$ to be the vector with entry $C_{k}(m)$, we complete our proof.

\break\section*{Appendix~B}\label{Appen_C}
\section*{A Simpler Model. The case of $K=2$}
 In this section, we consider an (M-UP) model with $K=2$. Let  $\Omega=\{v_1,v_2\}$, with $v_0< v_1\leq v_2$. A seller facing a patient customer would only consider a pricing policy ${\bold p}$ where at any time $t$, $p_t\in \{v_1,v_2\}$. We start by observing that the price $v_1$ is a reset price and that is, once set the system empties. Having this in mind, it is easy to see that the optimal policy is either a  fixed-price policy where for all $t$, $p_t=v_i$ $i=1,2$, or is cyclical of the form $\pi_c=\big((v_2,\tauu),(v_1,1)\big)$; the cycle starts with $v_2$ that is set for $0<\tauu<\infty$ consecutive periods and then $v_1$ is set once. Note that the extreme cases of $\tauu\in\{0,\infty\}$ cover the cases where the prices are constant.

The arriving customers has valuation $v_2$ with probability~$\gamma$, and have valuation $v_1$ with probability $\bar{\gamma}=1-\gamma$. The average revenue per period collected by the seller is $R(\tauu)$, with
\[
\cR(\tauu)={1\over \tauu+1} \left(\gamma v_2 \tauu + v_1 +v_1 {\bar{\gamma} q_{12}\over 1-q_{11}} \left(\tauu-{1-q_{11}^\tauu\over 1-q_{11}}\right)+v_1  \bar{\gamma} {1-q_{11}^\tauu\over 1-q_{11}} (q_{11}+q_{12}) \right)
\]
The first two terms in parenthesis represent the revenues from customers that arrive and buy right away: those with valuation $v_2$ at price $v_2$ during $\tauu$ periods, and everyone at price $v_1$ in the last period. The last two terms in parenthesis account for the customers that accumulate over time and end up buying at either $v_1$ or $v_2$. The expected number of customers buying in a particular period $l$, $1\leq l \leq \tauu$, at price $v_2$ is given by the expression
\[
\bar{\gamma} q_{12} \sum_{n=0}^{l-2} q_{11}^n = \bar{\gamma} q_{12} {1-q_{11}^{l-1}\over 1-q_{11}},
\]
i.e., these are all customers that arrive earlier with low valuation $v_2$, and for whom the valuation remains at $v_2$ and jumps to $v_1$ from at period~$l-1$. Summing over all~$l$, $1\leq l \leq \tauu$, we get:
\[
\sum_{l=1}^\tauu \left( \bar{\gamma} q_{12} {1-q_{11}^{l-1}\over 1-q_{11}}\right)=  {\bar{\gamma} q_{12} \over 1-q_{11}} \left(\tauu - \sum_{l=1}^\tauu q_{11}^{l-1} \right)= {\bar{\gamma} q_{12} \over 1-q_{11}} \left(\tauu - \sum_{l=0}^{\tauu-1} q_{11}^{l} \right) = {\bar{\gamma} q_{12} \over 1-q_{11}}\left(\tauu - {1-q_{11}^\tauu\over 1-q_{11}}\right)
\]
The expected number of customers buying in period $\tauu+1$ at price $v_1$ is given by those who arrived earlier with valuation $v_1$ and remained at $v_1$ until period $\tauu$, and then either stayed at $v_1$ or jumped to $v_2$, i.e.,
\[
\bar{\gamma}  \sum_{l=0}^{\tauu-1} q_{11}^l (q_{11}+q_{12}) = \bar{\gamma} (q_{11}+q_{12})  {1-q_{11}^\tauu\over 1-q_{11}}.
\]
We next state our main result here where we show that under some conditions involving the primitives of the problem, the seller is better off implementing a strictly cyclic policy of the form $\pi_c=\big((v_2,\tauu),(v_1,1)\big)$ with $\tauu$ finite.
\begin{prop}
We denote by $\tauu^*=\arg\max_{\tauu\in\mathbb{N}}\{\cR(\tauu)\}$. We have the following
$$0<\tauu^*<\infty~~~~~~~~~~~~\text{iff}~~~~~~~~~~C\ln\,q_{11}<B-v_1<C,$$ where $$B=\gamma\,v_2+v_2\bar{\gamma} { q_{12}\over 1-q_{11}}=v_2\,{\gamma q_{20}+q_{12}\over 1-q_{11}}>0,$$
and $$C=-v_2\bar{\gamma}{q_{12}\over(1-q_{11})^2}+v_2\bar{\gamma}{q_{11}+q_{12}\over 1-q_{11}}.$$If these conditions do not hold, then $\tauu^*=0$ iff $B-v_1<C\ln\,q_{11}$, and $\tauu^*=\infty$ otherwise.
\end{prop}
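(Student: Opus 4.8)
The plan is to collapse $\cR(\tauu)$ into a transparent one-parameter family and then locate its maximizer by a single monotonicity argument. First I would expand the bracket in the definition of $\cR(\tauu)$ and collect the three kinds of terms — those linear in $\tauu$, the constants, and those carrying the factor $q_{11}^{\tauu}$ — rewriting the numerator as $B\tauu+(v_1+C)-Cq_{11}^{\tauu}$. Dividing by $\tauu+1$ yields the clean form
\[
\cR(\tauu)=B+\frac{(v_1-B)+C\,(1-q_{11}^{\tauu})}{\tauu+1}.
\]
This identity already encodes the two extreme policies: $\cR(0)=v_1$ (reset every period) and $\cR(\tauu)\to B$ as $\tauu\to\infty$ (never reset), which is why the thresholds in the statement are phrased through $B-v_1$. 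A short computation identifies the coefficient of $(1-q_{11}^{\tauu})$ with the stated $C$, and shows $C=v_2\bar{\gamma}\,q_{11}(1-q_{11}-q_{12})/(1-q_{11})^2$, so that $C>0$ in the non-degenerate case (here $1-q_{11}-q_{12}$ is the exit probability from the low state). This positivity is what drives the sign analysis.

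Treating $\tauu$ as a continuous variable $t\geq 0$ and writing $\lambda=\ln q_{11}<0$, I would differentiate: the sign of $\cR'(t)$ is that of
\[
\Phi(t):=(B-v_1-C)+C\,q_{11}^{t}\bigl(1-\lambda(t+1)\bigr).
\]
The key step is that $\Phi$ is strictly decreasing. Indeed $\frac{d}{dt}\bigl[q_{11}^{t}(1-\lambda(t+1))\bigr]=-\lambda^{2}(t+1)q_{11}^{t}<0$, and since $C>0$ this gives $\Phi'(t)<0$ for all $t\geq 0$. Hence $\Phi$ changes sign at most once, and only from $+$ to $-$; equivalently $\cR$ is unimodal, first non-decreasing then non-increasing.

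The three regimes then fall out by reading off the endpoints $\Phi(0)=B-v_1-C\ln q_{11}$ and $\Phi(\infty)=B-v_1-C$. If $\Phi(0)\leq 0$, i.e. $B-v_1<C\ln q_{11}$, then $\Phi<0$ on all of $[0,\infty)$, so $\cR$ is decreasing and $\tauu^*=0$. If $\Phi(\infty)\geq 0$, i.e. $B-v_1>C$, then $\cR$ is increasing, the supremum $B$ is approached but never attained, and $\tauu^*=\infty$. In the remaining window $C\ln q_{11}<B-v_1<C$ one has $\Phi(0)>0>\Phi(\infty)$, so $\Phi$ has a unique interior zero and $\cR$ attains an interior maximum, giving $0<\tauu^*<\infty$. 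This reproduces exactly the three cases of the statement.

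The point requiring the most care is the passage from this continuous analysis to the genuine maximization over $\tauu\in\N$. Unimodality transfers verbatim to the integer sequence, so the monotone regimes immediately give $\tauu^*=0$ and $\tauu^*=\infty$. A fully discrete route avoids any relaxation: the first difference $\cR(\tauu+1)-\cR(\tauu)$ has the sign of $D(\tauu)=(B-v_1-C)+C\,q_{11}^{\tauu}\bigl[(1-q_{11})\tauu+(2-q_{11})\bigr]$, and a telescoping computation gives $D(\tauu+1)-D(\tauu)=-C(1-q_{11})^{2}(\tauu+2)q_{11}^{\tauu}<0$, so $D$ is strictly decreasing and $\cR$ is discretely unimodal as well. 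The subtlety is that the precise threshold separating $\tauu^*=0$ from $\tauu^*\geq 1$ is $\Phi(0)$ for the continuous problem but $D(0)$ for the discrete one, and these differ because $\ln q_{11}\neq-(1-q_{11})$; the cut-offs $C\ln q_{11}$ and $C$ recorded in the statement are precisely those of the continuous relaxation. I would therefore base the final argument on the continuous derivative $\Phi$, which delivers the stated inequalities directly, and invoke the discrete difference $D$ only to confirm that the integer maximizer shares the regime of the continuous one.
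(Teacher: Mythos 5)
Your proof is correct and follows essentially the same route as the paper's: the same collapse to $(\tauu+1)\cR(\tauu)=v_1+B\tauu+C(1-q_{11}^{\tauu})$, the same continuous derivative whose numerator is your $\Phi$ (the paper's $\mathcal{N}$), monotonicity of that numerator, and the endpoint signs $\Phi(0)$, $\Phi(\infty)$ delivering the three regimes. Two of your refinements go beyond what the paper writes and are worth recording. First, your closed form $C=v_2\bar{\gamma}\,q_{11}(1-q_{11}-q_{12})/(1-q_{11})^2\geq 0$ is the correct computation for the statement as written; the paper instead treats $C$ as having ambiguous sign (its proof only uses that $q_{11}^{\tauu}(1-(\tauu+1)\ln q_{11})$ is non-increasing, hence $\mathcal{N}$ is monotone for either sign of $C$), and its side remark that $C>0$ iff $v_2<v_1(1+q_{11}/q_{12})(1-q_{11})$ is actually inconsistent with the displayed definition of $C$ --- your computation exposes that internal discrepancy. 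Second, and more substantively, your discrete difference $D(\tauu)$ is a genuine addition: the paper argues entirely in the continuous variable and simply asserts in its final sentence that the integer maximizer inherits the continuous regime. As you point out, the exact integer threshold for $\tauu^*=0$ is $D(0)\leq 0$, i.e. $B-v_1\leq -C(1-q_{11})$, and since $\ln q_{11}<-(1-q_{11})$ there is a narrow window $C\ln q_{11}<B-v_1\leq -C(1-q_{11})$ in which the continuous maximizer lies in $(0,1)$ while the maximizer over $\N$ is $0$; neither the paper's proof nor, strictly speaking, the statement itself accounts for this boundary case. So your closing caveat identifies a real (if small) gap in the paper rather than a defect of your own argument, and your version is the more careful of the two.
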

\begin{proof}Given the expressions of $B$ and $C$, we rewrite the revenue function as follows
$$(\tauu+1)\,\mathcal{R}(\tauu)=v_1 +B\,\tauu+C\,(1-q_{11}^\tauu).$$
Note that $C>0$ iff $v_2<v_1(1+{q_{11}\over q_{12}})\,(1-q_{11})$ where the term $(1+{q_{11}\over q_{12}})\,(1-q_{11})\geq 1$.
In order to prove that it is optimal for the seller to set $p_t$ at $v_1$ and reset the system, it is enough to prove that $\tauu^*$ the maximizer of $\cR$ is finite i.e. $0\leq \tauu^*<\infty$.  %or equivalently that $\lim_{\tauu\rightarrow\infty}\mathcal{R}(\tauu)\equiv B <\sup_{n}\mathcal{R}(n).$
Simple calculations show that $$\mathcal{R}'(\tauu)={B-v_1-C+C\,q_{11}^\tauu\,(1-(\tauu+1)\,\ln(q_{11}))\over(\tauu+1)^2}.$$
We denote by $\cal N(\tauu)$ the numerator of $\cal{R}'(\tauu)$. By taking the derivative with respect to $\tauu$ of $g(\tauu)=q_{11}^\tauu\,(1-(\tauu+1)\,\ln(q_{11}))$, we observe that this quantity is non-increasing in $\tauu$, from which we conclude that $\cal N(\tauu)$ is itself monotone in $\tauu$. Therefore, the equation $\mathcal{R}'(\tauu)=0$ admits at most one solution. In light of that, we denote by $\hat{\tauu}^*$ the supremum of $\cR$ on the positive real line and conclude that a necessary and sufficient condition for  $0<\hat{\tauu}^*<\infty$ is that $$i.)~ \mathcal{N}(0)>0 ~~~\text{and}~~~~ ii.)~ \mathcal{N}(\infty)<0.$$  Hence, by putting these two inequalities together we get that  $-C(1-\ln q_{11})<B-v_1-C$ and $B-v_1-C<0$. We finally get that $$0<\hat{\tauu}^*<\infty~~~~~~~~~~~~\text{iff}~~~~~~~~~~C\ln\,q_{11}<B-v_1<C.$$ Note that under these condition $\cR(\tauu)> \cR(\infty)\equiv B$ for any $\tauu\geq \hat{\tauu}^*$ and $\cR(\tauu)> \cR(0)$ for any $\tauu\leq \hat{\tauu}^*$ and hence $\tauu^*:=\arg \max_n\cR(n)\in(0,\infty)$, which completes our proof.
\end{proof}

$$ $$

\noindent {\sc Acknowledgments.} The authors are grateful to Thierry Bousch for pointing out the reference Karp (1978), to Jiawei Zhang for a discussion of one of the main results of the paper, and the associate editor for his/her comments that helped improve the initial version of this paper. The first author was visiting Insead and LPSM Jussieu during the first writing of the paper and he thanks these institutions for their very warm hospitality. The second author was supported by the ANR-15-CE40-0001.

$$ $$ 

\bibliographystyle{ormsv080}
\bibliography{BiblioPricing}

\end{document}